\newtheorem{theorem}{Theorem}[section]
\newtheorem{proposition}[theorem]{Proposition}
\newtheorem{lemma}[theorem]{Lemma}
\newtheorem{corollary}[theorem]{Corollary}
\theoremstyle{definition}
\newtheorem{definition}[theorem]{Definition}
\newtheorem{example}[theorem]{Example}
\theoremstyle{remark}
\newtheorem{remark}[theorem]{Remark}
\numberwithin{equation}{section}
\newcommand{\tabbedblock}[1]{
\begin{tabbing}
\hspace{.28in} \= \hspace{3.05in} \= \hspace{2.4in} \= \hspace{3.2in} \kill #1
\end{tabbing}
}
\newcommand{\thickhat}[1]{\mathbf{\hat{\text{$#1$}}}}
\newcommand*\circled[1]{\tikz[baseline=(char.base)]{
            \node[shape=circle,draw,inner sep=2pt] (char) {#1};}}
\author{Sara Solhjem and Jessica Striker}
\email{jessica.striker@ndsu.edu, sara.solhjem@ndsu.edu}
\address{North Dakota State University}
\title{Sign matrix polytopes from Young tableaux}
\keywords{polytope; sign matrix; Young tableaux; alternating sign matrix; transportation polytope}
\subjclass[2010]{05A05, 52B05}
\begin{document}

\begin{abstract}
Motivated by the study of polytopes formed as the convex hull of permutation matrices and alternating sign matrices, we define several new families of polytopes as convex hulls of sign matrices, which are certain $\{0,1,-1\}$--matrices in bijection with semistandard Young tableaux. 
We investigate various properties of these polytopes, including their inequality descriptions, vertices, facets, and face lattices, as well as connections to alternating sign matrix polytopes and transportation polytopes. 
\end{abstract}

\maketitle
\tableofcontents

\section{Introduction}
\emph{Sign matrices} are defined as $\{0,1,-1\}$--matrices whose column partial sums are zero or one and whose row partial sums are nonnegative. Sign matrices were introduced by Aval~\cite{aval}, who showed they are in bijection with \emph{semistandard Young tableaux}. Young tableaux are well-loved objects for their nice combinatorial properties, including beautiful enumerative formulas, and nontrivial connections to Lie algebras, representation theory, and statistical physics \cite{BumpSchilling, fulton, KSS}. Aval used sign matrices to give a simple method for computing the \emph{left key} of a tableau by successively removing the negative ones from its corresponding sign matrix~\cite{aval}.

\emph{Alternating sign matrices} are $n \times n$ sign matrices with the additional properties that the rows and columns each sum to one and the row partial sums may not exceed one~\cite{MRRASM}. Alternating sign matrices were introduced by Robbins and Rumsey in their study of the $\lambda$-determinant~\cite{RobbinsRumsey}, with an enumeration formula conjectured by Mills, Robbins, and Rumsey~\cite{MRRASM}. The proof of this conjecture~\cite{zeilberger,kuperbergASMpf} was a major accomplishment in enumerative combinatorics in the 1990's. Alternating sign matrices are still a source of interest, in particular, with regard to intriguing open bijective questions involving plane partitions and connections to both the six-vertex model and various loop models in statistical physics~\cite{ZINNDPP,Bettinelli,Biane_Cheballah_1,BRESSOUDBOOK,razstrogpf2,razstrogpf,ProppManyFaces,razstrog,Striker_DPP,StrikerFPSAC2013,razstrogrow,STRIKERPOSET}. 

In \cite{striker}, the second author examined alternating sign matrices from a geometric perspective by defining and studying the polytope formed by taking the convex hull of all $n\times n$ alternating sign matrices, as vectors in $\mathbb{R}^{n^2}$. 
She studied various aspects of the alternating sign matrix polytope, including its dimension, facet count, vertices, face lattice, and inequality description. Independently, 
Behrend and Knight~\cite{behrend}  defined and studied the alternating sign matrix polytope. They proved the equivalence of the inequality and vertex descriptions,
computed the Ehrhart polynomials to $n=5$,
and studied lattice points in the $r$th dilate of the alternating sign matrix polytope, which they called {higher spin alternating sign matrices}.

In this paper, we extend this work by studying polytopes formed as convex hulls of sign matrices. We  define two new families of polytopes: $P(m,n)$ as the convex hull of all $m\times n$ sign matrices and $P(\lambda,n)$ as the convex hull of sign matrices in bijection with semistandard Young tableaux of a given shape $\lambda$ and entries at most $n$. 
If we, furthermore, fix the entries in the first column of the tableaux to be determined by the vector $v$, we obtain a polytope $P(v,\lambda,n)$, whose nonnegative part we show in Theorem~\ref{thm:transportation} is a transportation polytope.

\textbf{Our main results} include Theorems~\ref{thm:ineqthmshape}, \ref{thm:ineqthm}, and \ref{thm:v_ineqthmshape}, in which we have found the set of inequalities that determine $P(\lambda,n)$, $P(m,n)$, and $P(v,\lambda,n)$ by an extension of the proof technique von Neumann used to show that the convex hull of $n\times n$ permutation matrices, the $n$th Birkhoff polytope, consists of all $n\times n$ doubly stochastic matrices~\cite{vonneumann}. Other main results include vertex charactorizations (Theorems~\ref{thm:lambdavertex}, \ref{thm:mnvertex}, and \ref{thm:v_lambdavertex}), descriptions of the face lattices of these polytopes (Theorems~\ref{th:g_bijection}, \ref{thm:poset_iso}, \ref{th:g_bijection_shape}, and \ref{th:v_g_bijection_shape}), and enumerations of the facets (Theorems~\ref{thm:facets_mn} and \ref{thm:facets_lambda}). 

\textbf{Our outline is as follows.} In Section~\ref{sec:ssyt}, we refine Aval's bijection between semistandard Young tableaux and sign matrices to account for the tableau shape. In Section~\ref{sec:Plambda_n}, we define the polytope $P(\lambda,n)$ as the convex hull of all $\lambda_1\times n$ sign matrices corresponding to semistandard Young tableaux of shape $\lambda$ and entries at most $n$, prove its dimension, and show that the vertices are all the sign matrices used in the construction. In Section~\ref{sec:Pmn}, we define the polytope $P(m,n)$ as the convex hull of all $m\times n$ sign matrices, find its dimension and vertices. Then in Section~\ref{sec:ineq}, we prove Theorems~\ref{thm:ineqthmshape} and \ref{thm:ineqthm}, giving an inequality description of $P(\lambda,n)$ and $P(m,n)$ respectively. In Section~\ref{sec:Pmninq}, we prove facet counts for both polytope families (Theorems~\ref{thm:facets_mn} and \ref{thm:facets_lambda}). In Section~\ref{sec:facelattice} we give a description of the face lattices of these polytopes. In Section~\ref{sec:connections}, we describe how $P(m,n)$ and $P(\lambda,n)$ relate to each other and give connections to alternating sign matrix polytopes. In Section~\ref{sec:transportation}, we define another polytope $P(v,\lambda,n)$ as the convex hull of sign matrices in bijection with semistandard Young tableaux of shape $\lambda$, entries at most $n$, and first column given by $v$. We then prove Theorem~\ref{thm:transportation}, relating these polytopes to transportation polytopes.

\section{Semistandard Young tableaux and sign matrices}
\label{sec:ssyt}

In this section, we first define semistandard Young tableaux and sign matrices. We then discuss a bijection between them, due to Aval. We refine this bijection in Theorem~\ref{thm:MtoSSYT} to a bijection between semistandard Young tableaux with a given shape and sign matrices with prescribed row sums. 

We use the following notation throughout the paper.
\begin{definition}
\label{def:YD}
A \emph{partition} is a weakly decreasing sequence of positive integers $\lambda=[\lambda_1,\lambda_2,\ldots,\lambda_k]$. The positive integers $\lambda_i$ are called the \emph{parts} of the partition and $k$ is the \emph{length} of the partition.
A \emph{Young diagram} is a visual representation of a partition $\lambda$ as a collection of boxes, or cells, arranged in left-justified rows, with $\lambda_i$ boxes in row $i$. We will refer to a partition and its Young diagram interchangeably.

Let $\lambda'$ denote the \emph{conjugate partition} of $\lambda$, that is, the Young diagram defined by reflecting $\lambda$ about the diagonal. Note $k=\lambda'_1$.

The \emph{frequency representation} of $\lambda$ is the sequence $[a_1,a_2,\ldots,a_{\lambda_1}]$ where $a_i$ equals the number of parts of $\lambda$ equal to $i$. 
We may also denote $\lambda$ using \emph{exponential notation} as $[\lambda_1^{a_{\lambda_1}},\ldots,i^{a_i},\ldots,\lambda_k^{a_{\lambda_k}}]$.
\end{definition}

\begin{example}
The partition $\lambda=[6,3,3,1]$ has $k=4$ parts. The exponential notation for $\lambda$ is $[6,3^2,1]$ and its frequency representation is $[1,0,2,0,0,1]$. The conjugate partition is $\lambda'=[4,3,3,1,1,1]$. 
See Figure~\ref{fig:ydssyt}.
\end{example}

\begin{definition}
A \emph{semistandard Young tableau (SSYT)} is a filling of a Young diagram with positive integers such that the rows are weakly increasing and the columns are strictly increasing. See Figure~\ref{fig:ydssyt}.
\label{def:SSYT}
\end{definition}

\begin{figure}[htbp]
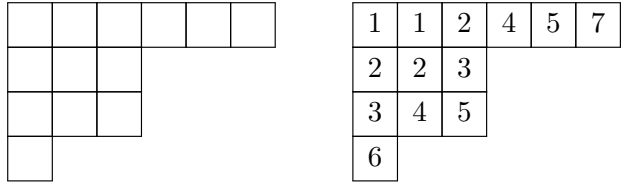

\begin{center}
$\begin{ytableau}
\text{  }&&&&&\\
&&\\
&&\\
\text{  }
\end{ytableau}$ \hspace{.3in} 
$\begin{ytableau}
1&1&2&4&5&7\\
2&2&3\\
3&4&5\\
6
\end{ytableau}$
\end{center}
\caption{A Young diagram of partition shape $\lambda=[6,3,3,1]$ and a semistandard Young tableau of the same shape.}
\label{fig:ydssyt}
\end{figure}

\begin{definition}
Let $SSYT(m,n)$ denote the set of {semistandard Young tableaux with at most $m$ columns 
and entries at most $n$}. 
\end{definition}

Gordon enumerated $SSYT(m,n)$ as follows.

\begin{theorem}[\cite{gordon}]  
\label{thm:Gordon}
The number of SSYT with at most $m$ columns and entries at most $n$ is 
\[\displaystyle\prod_{1\le i\le j\le n} \frac{m+i+j-1}{i+j-1}.\] 
\end{theorem}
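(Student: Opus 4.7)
The plan is to reduce Theorem~\ref{thm:Gordon} to the classical product formula for symmetric plane partitions in a box, via the standard bijection with Gelfand–Tsetlin patterns. First, I would convert each SSYT in $SSYT(m,n)$ of shape $\lambda$ into a GT pattern of rank $n$: padding $\lambda$ with zeros to length $n$, the $k$-th row (from the bottom) of the pattern is the shape $\lambda^{(k)}$ of the subtableau consisting of entries $\le k$. The SSYT conditions (rows weakly increasing, columns strictly increasing) are well-known to be equivalent to the interlacing inequalities $\lambda^{(k)}_i \ge \lambda^{(k-1)}_i \ge \lambda^{(k)}_{i+1}$ of the GT pattern, and the constraint $\lambda_1 \le m$ becomes the uniform bound $0 \le a_{i,k} \le m$ on all GT entries, since the top-left entry $a_{1,n} = \lambda_1$ dominates.

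Next I would biject rank-$n$ GT patterns with entries in $\{0,1,\ldots,m\}$ with symmetric plane partitions inside an $n \times n \times m$ box. Explicitly, defining $\pi_{i,j} = a_{i,\,n-j+i}$ for $i \le j$ (and extending by $\pi_{j,i} = \pi_{i,j}$) turns the triangular GT array into a symmetric $n \times n$ integer array; a direct check of indices shows that each GT interlacing relation rewrites precisely as a row- or column-monotonicity required of a plane partition, with bounds preserved. Invoking the classical product formula for symmetric plane partitions in an $n \times n \times m$ box (Macdonald's conjecture, proven by Andrews),
\[\prod_{i=1}^{n}\frac{m+2i-1}{2i-1}\prod_{1\le i<j\le n}\frac{m+i+j-1}{i+j-1},\]
and combining its $i=j$ and $i<j$ factors, immediately yields Gordon's product.

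The main obstacle is the bookkeeping in the GT-to-symmetric-plane-partition bijection — verifying that the index substitution $(i,k)\mapsto(i,\,n-k+i)$ sends each interlacing inequality to the correct monotonicity relation and that bounds transfer cleanly — but this is a mechanical unpacking of indices. A self-contained alternative (closer to Gordon's original approach) is to apply the Lindström–Gessel–Viennot lemma directly: each SSYT of fixed shape $\lambda$ corresponds to a family of $n$ non-intersecting lattice paths, yielding a binomial determinant for the fixed-$\lambda$ count, which one then sums over all $\lambda$ with $\lambda_1 \le m$ via the antisymmetrization-and-Vandermonde trick and evaluates by standard factorial manipulations. This avoids the appeal to a named theorem at the cost of a longer direct calculation.
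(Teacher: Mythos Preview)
The paper does not prove this theorem; it is quoted from Gordon~\cite{gordon} as background, with no argument supplied. So there is nothing in the paper to compare your proposal against.

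On its own merits, your plan is correct. The bijection from $SSYT(m,n)$ to rank-$n$ Gelfand--Tsetlin patterns with all entries in $\{0,\ldots,m\}$ is standard, and your index substitution $\pi_{i,j}=a_{i,\,n-j+i}$ does carry the GT interlacing relations bijectively to the row/column monotonicity of the upper-triangular half of a symmetric array; this gives the claimed bijection with symmetric plane partitions in an $n\times n\times m$ box. Andrews' product formula then rewrites exactly as Gordon's.

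One caveat worth flagging: the symmetric plane partition formula you invoke is itself a theorem of comparable depth (and historically contemporaneous with Gordon's), so this route is a reduction to an equally hard named result rather than an elementary proof. Your second suggestion---Lindstr\"om--Gessel--Viennot plus a determinant evaluation summed over shapes---is much closer to Gordon's original argument and is genuinely self-contained; if an independent proof is the goal, that is the route to write out in full.
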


\begin{definition}
Let $SSYT(\lambda,n)$ denote the set of {semistandard Young tableaux of partition shape $\lambda$ 
and entries at most $n$}. 
\end{definition}

For example, the tableau of Figure~\ref{fig:ydssyt} is in both $SSYT(6,n)$ and $SSYT([6,3,3,1],n)$ for any $n \geq 7$.

\smallskip
$SSYT(\lambda,n)$ is enumerated by Stanley's hook-content formula.

\begin{theorem} [\cite{rpstanley}]
The number of SSYT of shape $\lambda$ with entries at most $n$ is
\[  \displaystyle \prod_{u \in \lambda} \frac{n+c(u)}{h(u)} \]
where $c(u)$ is the content of the box $u$, given by $c(u)=i-j$ for  $u=(i,j)$, and $h(u)$ is the hook length of $u$, given by the number of squares directly below or to the right of $u$ (counting $u$ itself). 
\end{theorem}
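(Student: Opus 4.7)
The plan is to recognize $|SSYT(\lambda,n)|$ as the principal specialization $s_\lambda(1,1,\ldots,1)$ of the Schur polynomial and then compute that specialization via the Weyl bialternant formula. By the combinatorial definition of Schur functions, $s_\lambda(x_1,\ldots,x_n)=\sum_{T\in SSYT(\lambda,n)} \prod_i x_i^{m_i(T)}$, where $m_i(T)$ counts the entries of $T$ equal to $i$, so setting each $x_i=1$ immediately gives $|SSYT(\lambda,n)|=s_\lambda(1,\ldots,1)$. Directly substituting all ones into the bialternant produces a $0/0$ indeterminate form, so I would instead introduce a parameter $q$, compute the $q$-principal specialization $s_\lambda(1,q,q^2,\ldots,q^{n-1})$, and then take $q\to 1$.

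Using the bialternant formula
\[ s_\lambda(x_1,\ldots,x_n)=\frac{\det\!\bigl(x_i^{\lambda_j+n-j}\bigr)_{1\le i,j\le n}}{\det\!\bigl(x_i^{n-j}\bigr)_{1\le i,j\le n}}, \]
the substitution $x_i=q^{i-1}$ turns both determinants into Vandermonde-type determinants in the variables $1,q,\ldots,q^{n-1}$ with appropriately shifted exponents. Factoring common powers of $q$ out of the rows of the numerator and cancelling against the Vandermonde denominator yields
\[ s_\lambda(1,q,\ldots,q^{n-1}) = q^{n(\lambda)}\prod_{1\le i<j\le n}\frac{1-q^{\lambda_i-\lambda_j+j-i}}{1-q^{j-i}}, \]
where $n(\lambda)=\sum_i(i-1)\lambda_i$. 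The main step, and the one I expect to be the principal obstacle, is the combinatorial rewriting of this double product over index pairs as a product over cells $u\in\lambda$ of the hook-content factors $\tfrac{1-q^{n+c(u)}}{1-q^{h(u)}}$. I would carry this out by matching factors cell by cell, using the beta-numbers $\lambda_i+n-i$ to organize the numerator and a row-by-row accounting of arm and leg lengths to organize the denominator; alternatively, one can induct on $|\lambda|$ by removing a removable corner and tracking how both sides of the identity change.

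Once the $q$-identity
\[ s_\lambda(1,q,\ldots,q^{n-1}) = q^{n(\lambda)}\prod_{u\in\lambda}\frac{1-q^{n+c(u)}}{1-q^{h(u)}} \]
is established, taking $q\to 1$ via the elementary fact $(1-q^k)/(1-q)\to k$ sends each factor $\tfrac{1-q^{n+c(u)}}{1-q^{h(u)}}$ to $\tfrac{n+c(u)}{h(u)}$ and the prefactor $q^{n(\lambda)}$ to $1$, producing the claimed formula. As an alternative, one could express $s_\lambda$ via Jacobi--Trudi and apply the Lindstr\"om--Gessel--Viennot lemma to rewrite $|SSYT(\lambda,n)|$ as a determinant of binomial coefficients, but converting that determinant into product form still requires essentially the same hook-content bookkeeping, so the combinatorial core of the argument is unchanged.
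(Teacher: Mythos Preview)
The paper does not give its own proof of this statement: it is quoted as a classical background result with a citation to Stanley, so there is nothing to compare your argument against. Your outline is the standard route to the hook-content formula (principal specialization of the Schur polynomial via the Weyl bialternant, $q$-specialization, rewriting the product over pairs $i<j$ as a product over cells, then $q\to 1$), and it is correct as stated; the step you flag as the main obstacle---matching the factors $\prod_{i<j}\frac{1-q^{\lambda_i-\lambda_j+j-i}}{1-q^{j-i}}$ with $\prod_{u\in\lambda}\frac{1-q^{n+c(u)}}{1-q^{h(u)}}$---is indeed where all the content lies, and either of the bookkeeping strategies you mention (beta-numbers or induction on removing a corner) works. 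One small caution: the statement here uses the convention $c(u)=i-j$, which is the negative of the more common $j-i$; make sure your row/column indexing in the $q^{n(\lambda)}$ prefactor and in the factor-matching step is consistent with that choice, or the exponents will be off by a sign.
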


Aval \cite{aval} defined a new set of objects, called sign matrices, which will be the building blocks of the polytopes that will be our main objects of study.

\begin{definition}[\cite{aval}]
\label{def:sm}
A \emph{sign matrix} is a matrix $M=\left(M_{ij}\right)$ with entries in $\left\{-1,0,1\right\}$ such that:
\begin{align}
\label{eq:sm1}
\displaystyle\sum_{i'=1}^{i} M_{i'j} &\in \left\{0,1\right\}, & \mbox{ for all }i,j. \\
\label{eq:sm2}
\displaystyle\sum_{j'=1}^{j} M_{ij'} &\geq 0, & \mbox{ for all }i,j.
\end{align}
\end{definition}

In words, the column partial sums from the top of a sign matrix equal either 0 or 1 and the partial sums of the rows from the left are non-negative. 

Aval showed that $m\times n$ sign matrices are in bijection with SSYT with at most $m$ columns and largest entry at most $n$ \cite[Proposition 1]{aval}.  
We now define the set of sign matrices we will show in Theorem~\ref{thm:MtoSSYT} to be in bijection with $SSYT(\lambda, n)$; this is a refinement of Aval's bijection. See Figure~\ref{fig:(3,3,1,1,1)} for an example of this bijection. 

\begin{definition}
\label{def:MtoSSYT}
Fix a partition $\lambda$ with frequency representation $[a_1,a_2,\ldots,a_{\lambda_1}]$ and fix $n\in\mathbb{N}$.
Let \emph{$M(\lambda,n)$} be the set of $\lambda_1\times n$ sign matrices $M=(M_{ij})$ such that:
\begin{align}
\label{eq:Mij_rowsum}
\displaystyle\sum_{j=1}^n M_{ij} &= a_{\lambda_1-i+1},
 & \mbox{ for all }1\le i \le \lambda_1.
\end{align}
Call $M(\lambda,n)$ the set of \emph{sign matrices of shape $\lambda$ and content at most $n$}. 
\end{definition}

\begin{theorem}
\label{thm:MtoSSYT}
$M(\lambda,n)$ is in explicit bijection with $SSYT(\lambda,n)$.
\end{theorem}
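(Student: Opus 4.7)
The plan is to construct the bijection directly by passing through the partial sum matrix of $M$, and to verify that the shape of the resulting tableau is exactly $\lambda$ using condition~\eqref{eq:Mij_rowsum}. Given $M\in M(\lambda,n)$, form the matrix $P$ with $P_{ij}:=\sum_{i'=1}^{i}M_{i'j}$. Condition~\eqref{eq:sm1} forces $P_{ij}\in\{0,1\}$, so each row $P_i$ is the indicator of a subset $S_i\subseteq[n]$. I would then define $T$ to be the filling whose $k$th column from the left is $S_{\lambda_1-k+1}$, written in increasing order from top to bottom.

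Two things need to be checked to see that $T\in SSYT(\lambda,n)$: the tableau inequalities and the shape. Column strictness is automatic because each $S_i$ is a set. The weakly-increasing row condition amounts, after the reindexing $c_k=S_{\lambda_1-k+1}$, to $(S_i)_{(r)}\le(S_{i-1})_{(r)}$ for every $r\le|S_{i-1}|$, where $(S)_{(r)}$ denotes the $r$th smallest element of $S$. This is equivalent to $|S_i\cap[1,j]|\ge|S_{i-1}\cap[1,j]|$ for all $j$, which is exactly condition~\eqref{eq:sm2} for row $i$ subtracted from the analogous statement for row $i-1$. For the shape, the $k$th column of $T$ has length $|S_{\lambda_1-k+1}|$, so I must show $|S_i|=\lambda'_{\lambda_1-i+1}$. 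Telescoping and using \eqref{eq:Mij_rowsum} yields $|S_i|=\sum_{i'=1}^{i}a_{\lambda_1-i'+1}=\sum_{\ell\ge\lambda_1-i+1}a_\ell$, and this sum equals $\lambda'_{\lambda_1-i+1}$ because $\lambda'_k$ counts the parts of $\lambda$ that are at least $k$. In particular the column lengths weakly decrease from left to right, so $T$ has shape $\lambda$.

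For the inverse, given $T\in SSYT(\lambda,n)$, set $S_i$ to be the set of entries in column $\lambda_1-i+1$ of $T$, put $P_{ij}:=\mathbf{1}_{j\in S_i}$, and $M_{ij}:=P_{ij}-P_{i-1,j}$ with the convention $P_{0,j}:=0$. Condition~\eqref{eq:sm1} is immediate from $P_{ij}\in\{0,1\}$, and reversing the prefix-sum argument from the previous paragraph turns the SSYT inequality $(c_k)_{(r)}\le(c_{k+1})_{(r)}$ into \eqref{eq:sm2}. The row sum of $M$ at row $i$ recovers $|S_i|-|S_{i-1}|=\lambda'_{\lambda_1-i+1}-\lambda'_{\lambda_1-i+2}=a_{\lambda_1-i+1}$, so $M\in M(\lambda,n)$. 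The main obstacle I anticipate is the careful translation between prefix sums of the indicator vectors $P_{i-1},P_i$ and the pointwise inequalities on their $r$th smallest elements; the direction of inequality is easy to get wrong, and this is the one place where conditions \eqref{eq:sm1} and \eqref{eq:sm2} both have to interact with the SSYT column/row axioms.
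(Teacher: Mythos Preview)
Your construction is exactly Aval's bijection via column partial sums, which is the same map the paper uses; the paper simply cites Aval for the well-definedness and invertibility of $\Phi$ and then verifies the row-sum refinement \eqref{eq:Mij_rowsum}, whereas you carry out the full verification explicitly. Your shape computation $|S_i|=\sum_{\ell\ge\lambda_1-i+1}a_\ell=\lambda'_{\lambda_1-i+1}$ and the prefix-sum equivalence for the row condition are correct, so the argument goes through.
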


\begin{proof}
We first outline the bijection of Aval~\cite{aval}  between SSYT and sign matrices. Given an $m\times n$ sign matrix $M$, we construct a tableau $\Phi(M)=T \in SSYT(m,n)$ such that the entries in the $i$th row of $M$ determine the $(m-i+1)$st column (from the left) of $T$. In the $i$th row of $M$, note which columns have a partial sum (from the top) of one. Record the numbers of the matrix columns in which this occurs, in increasing order from top down, to form column $m-i+1$ of $T$. Since we record the entries in increasing order for each column of $T$ and each entry only occurs once in a column, the columns of $T$ are strictly increasing. The rows of $T$ are weakly increasing, since by (\ref{eq:sm2}) the partial sums of the rows of $M$ are non-negative. Thus, $T$ is a SSYT. The length of the first row of $T$ is $m$ and the entries of $T$ are at most $n$, since $M$ is an $m\times n$ matrix. Thus $\Phi$ maps into $SSYT(m,n)$. 

Aval proved in~\cite{aval} that $\Phi$ is an invertible map that gives a bijection between $SSYT(m,n)$ and $m\times n$ sign matrices. We refine this to a bijection between $SSYT(\lambda,n)$ and $M(\lambda,n)$ by keeping track of the row sums of $M$ and the shape of $T$. 
Given a tableau, $T\in SSYT(\lambda,n)$, we show that $\Phi^{-1}(T)=M\in M(\lambda,n)$. By~\cite{aval}, we know that $M$ is a sign matrix, so we only need to show it satisfies the condition  (\ref{eq:Mij_rowsum}). Consider the frequency representation $[a_1,a_2,a_3, \dots, a_{\lambda_1}]$ of the partition $\lambda$. Consider columns $\lambda_1-i$ and $\lambda_1-i+1$ of $T$.
If a number, $\ell$, appears in both columns $\lambda_1-i+1$ and $\lambda_1-i+2$ of $T$, then $M_{i\ell}=0$.
So we can ignore when a number is repeated in adjacent columns of $T$, since it corresponds to a zero in $M$, which does not contribute to the row sum. 
Suppose $\ell$ appears in column $\lambda_1-i+2$ of $T$ but not column $\lambda_1-i+1$.  Then $M_{i\ell}=-1$.
Suppose $\ell$ appears in column $\lambda_1-i+1$ of $T$ but not column $\lambda_1-i+2$.  Then $M_{i\ell}=1$.
So the total row sum $\displaystyle\sum_{j'=1}^n M_{ij'}$ equals the number of entries that appear in column $\lambda_1-i+1$ of $T$ but not column $\lambda_1-i+2$ minus the number of entries that appear in column $\lambda_1-i+2$ but not column $\lambda_1-i+1$. This is exactly the length of column $\lambda_1-i+1$ minus the length of column $\lambda_1-i+2$, which is given by $a_{\lambda_1-i+1}$.

See Figure~\ref{fig:(3,3,1,1,1)} and Example~\ref{ex:MtoSSYT}.
\end{proof}

\begin{example}
\label{ex:MtoSSYT}
In Figure~\ref{fig:(3,3,1,1,1)}, we have a semistandard Young tableau $T$ of shape $[3,3,1,1,1]$ and the corresponding sign matrix $M$ formed by the 
bijection discussed in
Theorem~\ref{thm:MtoSSYT}. 
To see that $M$ satisfies (\ref{eq:Mij_rowsum}), note that the total row sums of $M$ are $2$, $0$ and $3$, while the frequency representation of the partition $[3,3,1,1,1]$ is $[3,0,2]$. 
\end{example}

\begin{figure}[htbp]
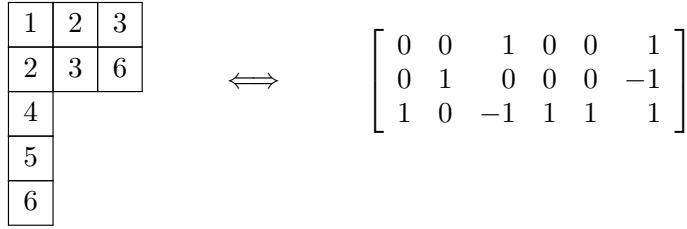

\begin{center}
$\raisebox{.22in}{
\begin{ytableau}
1&2&3\\ 2&3&6\\ 4\\5\\6
\end{ytableau}} \hspace{.4in} \Longleftrightarrow \hspace{.4in} \left[ \begin{array}{rrrrrr} 
0&0&1&0&0&1\\0&1&0&0&0&-1\\ 1&0&-1&1&1&1 \end{array} \right]$ 
\end{center}
\caption{The SSYT of shape $[3,3,1,1,1]$ and corresponding sign matrix from Example~\ref{ex:MtoSSYT}.}
\label{fig:(3,3,1,1,1)}
\end{figure}

\section{Definition and vertices of $P(\lambda,n)$}
\label{sec:Plambda_n}

In this section, we define the first of the two polytopes that we are studying and prove some of its properties. 

\begin{definition}
Let \emph{$P(\lambda,n)$} be the polytope defined as the convex hull, as vectors in $\mathbb{R}^{\lambda_1 n}$, of all the matrices in $M(\lambda,n)$. 
Call this the \emph{sign matrix polytope of shape $\lambda$.} \end{definition}

We now investigate the structure of this polytope, starting with its dimension.

\begin{proposition}
\label{prop:dim}
The dimension of $P(\lambda, n)$ is $\lambda_1(n-1)$ if $1 \leq k < n$. 
When $k=n$, the dimension is $(\lambda_1 - \lambda_n)(n-1).$ 
\end{proposition}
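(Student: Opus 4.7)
The plan is to determine $\dim P(\lambda,n)$ by matching upper and lower bounds on the affine span of $M(\lambda,n)$.

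\medskip
\noindent\emph{Upper bound.} Every $M\in M(\lambda,n)$ satisfies the $\lambda_1$ row-sum equations \eqref{eq:Mij_rowsum}, which are linearly independent, so $\dim P(\lambda,n)\le \lambda_1(n-1)$. When $k=n$ there are further forced equations: by Theorem~\ref{thm:MtoSSYT}, each such $M$ corresponds to a $T\in SSYT(\lambda,n)$ of shape $\lambda$, and columns $1,\ldots,\lambda_n$ of $T$ all have length $n$. Since their entries are strictly increasing and bounded by $n$, each of these columns must equal $[1,2,\dots,n]^T$. Running this back through Aval's rule shows that rows $\lambda_1-\lambda_n+2,\ldots,\lambda_1$ of $M$ vanish identically and that row $\lambda_1-\lambda_n+1$ is forced via $M_{\lambda_1-\lambda_n+1,j}=1-\sum_{i=1}^{\lambda_1-\lambda_n}M_{ij}$. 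Hence $M$ is determined by its first $\lambda_1-\lambda_n$ rows, each of which carries only $n-1$ degrees of freedom, so $\dim P(\lambda,n)\le(\lambda_1-\lambda_n)(n-1)$.

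\medskip
\noindent\emph{Lower bound.} I would exhibit enough affinely independent sign matrices in $M(\lambda,n)$ to attain the upper bound. As base point, take the sign matrix $M^{(0)}$ corresponding to the tableau $T^{(0)}$ with $T^{(0)}_{r,c}=r$ for every cell $(r,c)\in\lambda$. For each \emph{free} row index $i$---that is, $i\in\{1,\ldots,\lambda_1-\lambda_n\}$ under the convention $\lambda_n=0$ when $k<n$---and each $j\in\{1,\ldots,n-1\}$, I would construct $M^{(i,j)}\in M(\lambda,n)$ whose difference from $M^{(0)}$ is supported in a controllable way on row $i$, obtained by a local modification of $T^{(0)}$ in column $c=\lambda_1-i+1$ (adjusting adjacent columns when necessary to preserve the SSYT conditions). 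A triangular argument on the leading positions of the differences $M^{(i,j)}-M^{(0)}$ then produces the required $(\lambda_1-\lambda_n)(n-1)$ linearly independent vectors, showing that the affine hull of $M(\lambda,n)$ coincides with the subspace identified in the upper bound.

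\medskip
\noindent\emph{Main obstacle.} The delicate step is the lower-bound construction. Because adjacent columns of a SSYT are tightly coupled by the weakly-increasing row condition, many single-cell perturbations of $T^{(0)}$ are forbidden, and one may need to propagate a change through a short chain of cells before arriving at a valid tableau. Once these perturbations are in hand, linear independence of the resulting sign-matrix differences reduces to routine bookkeeping on their supports.
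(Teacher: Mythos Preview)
Your upper-bound argument is exactly what the paper does, though you supply considerably more detail in the $k=n$ case (the paper merely says that the $\lambda_n$ tableau columns of length $n$ are forced and hence ``the matrix rows corresponding to these columns are determined''). For the lower bound, the paper gives no argument at all: it simply asserts that the row-sum equations (and, when $k=n$, the forced rows) are ``the only restriction on the dimension.'' So your plan is already more rigorous than the paper's own proof.

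Your lower-bound strategy---perturbing a base tableau $T^{(0)}$ column by column and reading off a triangular family of differences---is sound, and the obstacle you flag (that a single-cell change may violate the row-monotonicity condition and must be propagated) is real but manageable. One way to streamline the construction and avoid tableau gymnastics is to work directly with partial column sums: starting from $M^{(0)}$, for each free pair $(i,j)$ flip the value $c_{ij}\in\{0,1\}$ and verify that one can adjust entries only in row $i$ and row $i+1$ (or in the last free row when $i$ is maximal) to restore membership in $M(\lambda,n)$; the resulting differences are then supported on at most two rows with leading entry at position $(i,j)$, making linear independence immediate. Either route completes the argument that the paper leaves implicit.
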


\begin{proof}
Since each matrix in $M(\lambda,n)$ is $\lambda_1 \times n$, the ambient dimension is $\lambda_1 n$.  
However, when constructing the sign matrix corresponding to a tableau of shape $\lambda$, as in Theorem~\ref{thm:MtoSSYT}, the last column is determined by the shape $\lambda$ via the prescribed row sums (\ref{eq:Mij_rowsum}) of Definition~\ref{def:MtoSSYT}. This is the only restriction on the dimension when 
$1 \leq k < n$, where $k$ is the length of $\lambda$, reducing the free entries in the matrix by one column. Thus,
the dimension is $\lambda_1(n-1)$. 

When $k=n$ the dimension depends on the number of columns of length $n$ in $\lambda$; this is given by $\lambda_n$. A column of length $n$ in a SSYT with entries at most $n$ is forced to be filled with the numbers $1,2,\ldots,n$. So the matrix rows corresponding to these columns are determined, and thus do not contribute to the dimension.
Thus the dimension is $(\lambda_1 - \lambda_n)(n-1)$. 
\end{proof}

From now on, we assume $k<n$. We now define a graph associated to any matrix. The graph will be useful in upcoming theorems; see Figure~\ref{fig:partialsums}.

\begin{definition}
\label{def:gamma}
We define the $m\times n$ \emph{grid graph} $\Gamma_{(m,n)}$ as follows.
The vertex set is $V(m,n):=\{(i,j) : 1 \leq i \leq m+1, 1 \leq j \leq n+1  \}$. 
We separate the vertices into two categories. We say the \emph{internal vertices} are \{$(i,j)$ : $1 \leq i \leq m, 1 \leq j \leq n$\} and the \emph{boundary vertices} are $\{(m+1,j) \mbox{ and } (i,n+1) : 1 \leq i \leq m, 1 \leq j \leq n\}$. The edge set is
\[E(m,n):=  \begin{cases} (i,j) \text{ to } (i+1,j) & 1 \leq i \leq m, 1 \leq j \leq n\\ (i,j) \text{ to } (i,j+1) & 1 \leq i \leq m, 1 \leq j \leq n. \end{cases}\]
We draw the graph with $i$ increasing to the right and $j$ increasing down, to correspond with matrix indexing. 
\end{definition}

\begin{definition}
\label{def:Xhat}
Given an $m\times n$ matrix $X$, we define a graph, $\thickhat{X}$, which is a labeling of the edges of $\Gamma_{(m,n)}$ from Definition~\ref{def:gamma}. 
The horizontal edges from $(i,j)$ to $(i,j+1)$ are each labeled by the corresponding row partial sum $r_{ij}= \displaystyle\sum_{j'=1}^{j} X_{ij'}$ ($1\leq i\leq m$, $1\leq j\leq n$). Likewise, the vertical edges from $(i,j)$ to $(i+1,j)$ are each labeled by the corresponding  column partial sum $c_{ij} = \displaystyle\sum_{i'=1}^{i} X_{i'j}$ ($1\leq i\leq m$, $1\leq j\leq n$).  In many of the figures, we will label the interior vertices with their corresponding matrix entry $\textcolor{blue}{\bf{X_{ij}}}$ ($1\leq i\leq m$, $1\leq j\leq n$). 
\end{definition}

\begin{remark}
\label{remark:invertible}
Note that given either the row or column partial sum labels of $\thickhat{X}$, one can uniquely recover the matrix $X$. 
\end{remark}

See Figures~\ref{fig:partialsums} and~\ref{fig:vertices}.

\begin{figure}[htbp]
\begin{center}
\includegraphics[scale=.65]{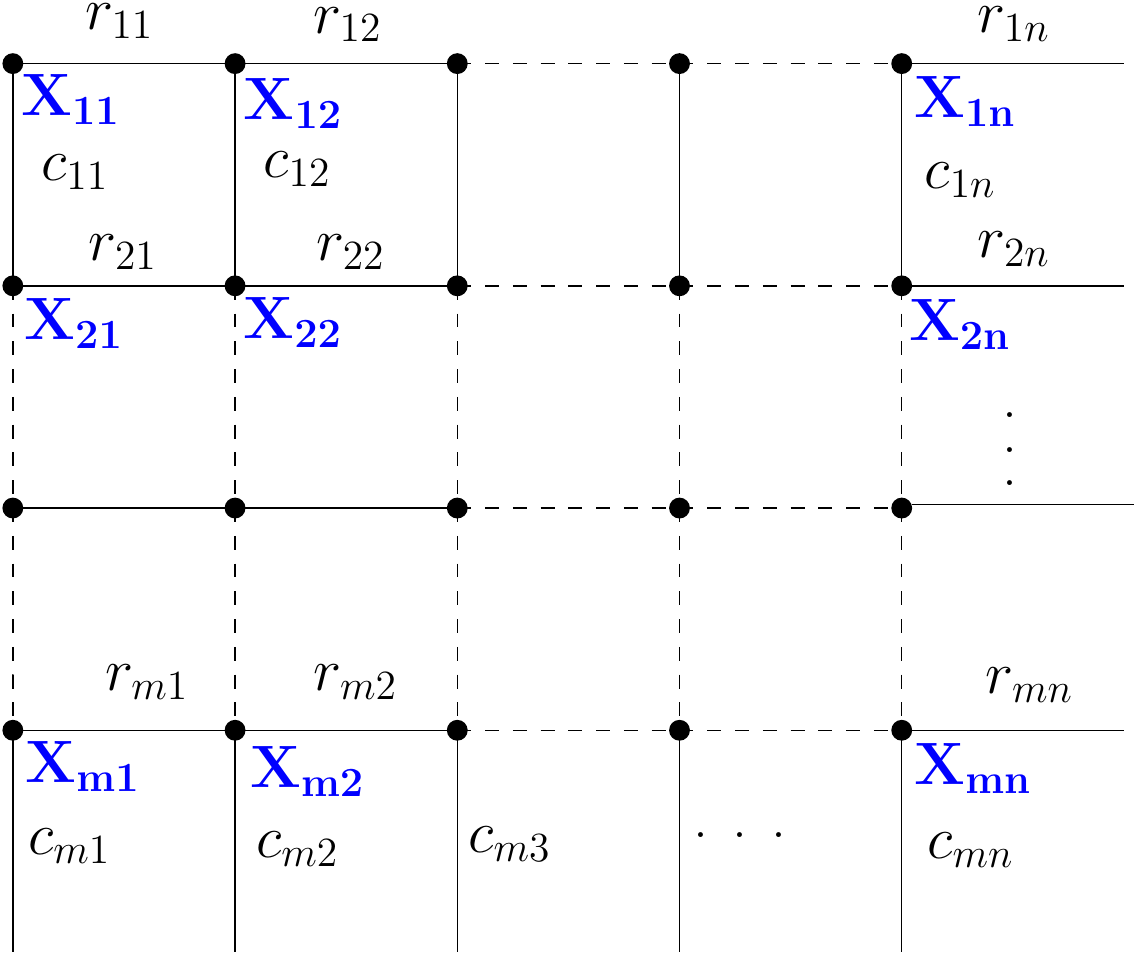}
\end{center}
\caption{The graph $\thickhat{X}$ from Definition~\ref{def:Xhat}, with dots on only the internal vertices. }
\label{fig:partialsums}
\end{figure}

The above notation will be used in proving the next theorem, which identifies the vertices of $P(\lambda,n)$. 

\begin{theorem}
\label{thm:lambdavertex}
The vertices of $P(\lambda,n)$ are the sign matrices $M(\lambda,n)$.
\end{theorem}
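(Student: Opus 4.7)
The plan is to establish the inclusion in one direction for free from the definition, and then prove the nontrivial direction using the key structural constraint that column partial sums of sign matrices lie in $\{0,1\}$.

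Since $P(\lambda,n)$ is defined as the convex hull of $M(\lambda,n)$, every vertex of $P(\lambda,n)$ lies in $M(\lambda,n)$. So the whole content is the reverse inclusion: every $M \in M(\lambda,n)$ is a vertex, i.e., cannot be written as a nontrivial convex combination of other elements of $M(\lambda,n)$. I would assume $M = \sum_{\ell} t_\ell M^{(\ell)}$ with $t_\ell > 0$, $\sum t_\ell = 1$, and each $M^{(\ell)} \in M(\lambda,n)$, and aim to conclude $M^{(\ell)} = M$ for every $\ell$.

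The key observation is that the partial sums behave linearly under convex combinations, and condition (\ref{eq:sm1}) severely restricts their values. For any $i,j$ the column partial sum $c_{ij}(M) = \sum_{i'=1}^{i} M_{i'j}$ equals $\sum_\ell t_\ell\, c_{ij}(M^{(\ell)})$, and by (\ref{eq:sm1}) both $c_{ij}(M)$ and each $c_{ij}(M^{(\ell)})$ lie in $\{0,1\}$. If $c_{ij}(M) = 0$ then since $t_\ell > 0$ and $c_{ij}(M^{(\ell)}) \geq 0$, each summand must vanish, forcing $c_{ij}(M^{(\ell)}) = 0$ for all $\ell$; symmetrically, if $c_{ij}(M) = 1$ then since $c_{ij}(M^{(\ell)}) \leq 1$ and $\sum t_\ell = 1$, each $c_{ij}(M^{(\ell)}) = 1$. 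In either case all the $M^{(\ell)}$ share the same column partial sums as $M$. By Remark~\ref{remark:invertible}, a matrix is uniquely recoverable from its column partial sums, so $M^{(\ell)} = M$ for every $\ell$, proving $M$ is a vertex.

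I do not expect a substantive obstacle: the proof is essentially a single application of the $\{0,1\}$-rigidity of the column partial sums, combined with Aval's invertibility observation. The row-sum constraint (\ref{eq:Mij_rowsum}) defining $M(\lambda,n)$ does not even need to be invoked, since it is automatically preserved under convex combinations. The slight conceptual point worth highlighting is that although the entries of sign matrices take three values $\{-1,0,1\}$, which would ordinarily leave room for nontrivial convex combinations, the binary constraint on the running column sums is strong enough to kill this freedom.
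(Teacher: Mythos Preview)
Your argument is correct and takes a genuinely different route from the paper. The paper proves that each $M\in M(\lambda,n)$ is a vertex by constructing an explicit separating hyperplane: it sets $C_M=\{(i,j):c_{ij}(M)=1\}$, defines the linear functional $H_M(X)=\sum_{(i,j)\in C_M} c_{ij}(X)$, and checks that $H_M(M)=|\lambda|$ while $H_M(M')<|\lambda|$ for every other $M'\in M(\lambda,n)$, so the affine hyperplane $H_M(X)=|\lambda|-\tfrac12$ separates $M$ from the rest. You instead argue directly from the extreme-point definition: writing $M$ as a convex combination of sign matrices and using that each column partial sum lies in $\{0,1\}$ forces all summands to share $M$'s column partial sums, hence to equal $M$ by Remark~\ref{remark:invertible}. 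Your approach is shorter and more conceptual---it is really the observation that the column-partial-sum map embeds $M(\lambda,n)$ into the vertex set of a hypercube---while the paper's approach has the advantage of producing an explicit supporting hyperplane, which is sometimes useful downstream (e.g., it makes the distinction with the $P(m,n)$ case in Theorem~\ref{thm:mnvertex} more visible). Both arguments ultimately rest on the same $\{0,1\}$-rigidity of the $c_{ij}$ and on Remark~\ref{remark:invertible}.
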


\begin{proof} Fix a sign matrix $M\in M(\lambda,n)$. In order to show that $M$ is a vertex of $P(\lambda,n)$, we need to find a hyperplane with $M$ on one side and all the other sign matrices in $M(\lambda,n)$ on the other side. Then since $P(\lambda,n)$ is the convex hull of $M(\lambda,n)$, $M$ will necessarily be a vertex.

Let $c_{ij}$ denote the column partial sums of $M$, as in Definition~\ref{def:Xhat}.
Define $C_M:=\{(i,j)\ :  \ c_{ij}=1\}$. 
Note that $C_M$ is unique for each $M$, since the column partial sums can only be $0$ or $1$, and by Remark~\ref{remark:invertible}, we can recover $M$ from the $c_{ij}$. Also note that $|C_M|=|\lambda|$, that is, the number of partial column sums that equal one in $M$ equals the number of boxes in $\lambda$. 

Define a hyperplane in $\mathbb{R}^{\lambda_1 n}$ as follows, on coordinates $X_{ij}$
corresponding to positions in a $\lambda_1\times n$ matrix.
\begin{equation} H_M(X):=\sum_{(i,j)\in C_M} \sum_{i'=1}^i X_{i'j}=|\lambda|-\frac{1}{2}
\label{eq:shape_hyper}
\end{equation}

If $X=M$, then $H_M(X)= H_M(M)=|\lambda|$, since $|C_M|=|\lambda|$.
Given a hyperplane formed in this manner, we may recover the matrix from which it is formed, thus $H_M$ is unique for each $M$. 

By definition, every matrix in $M(\lambda,n)$ has $|\lambda|$ partial column sums that equal $1$.  Let $M'\neq M$ be another matrix in $M(\lambda,n)$.
It must be that there is an $(i,j)$ where $c_{ij}=1$ in $M$ and $c_{ij}=0$ in $M'$. $H_{M}(M')$ will be smaller than $H_M(M)$ by one for every time this occurs.
For any $(i,j)$ such that $c_{ij}=0$ in $M$ and $c_{ij}=1$ in $M'$, $(i,j)\not\in C_M$, so this partial sum does not contribute to $H_M$. 

Therefore, 
$H_M(M)=|\lambda|>|\lambda|-\frac{1}{2}$ while $H_M(M')<|\lambda|-\frac{1}{2}$. 
Thus the sign matrices of $M(\lambda,n)$ are the vertices of $P(\lambda,n)$. 
\end{proof}

\begin{figure}[htbp]
\begin{center}
\includegraphics[scale=.8]{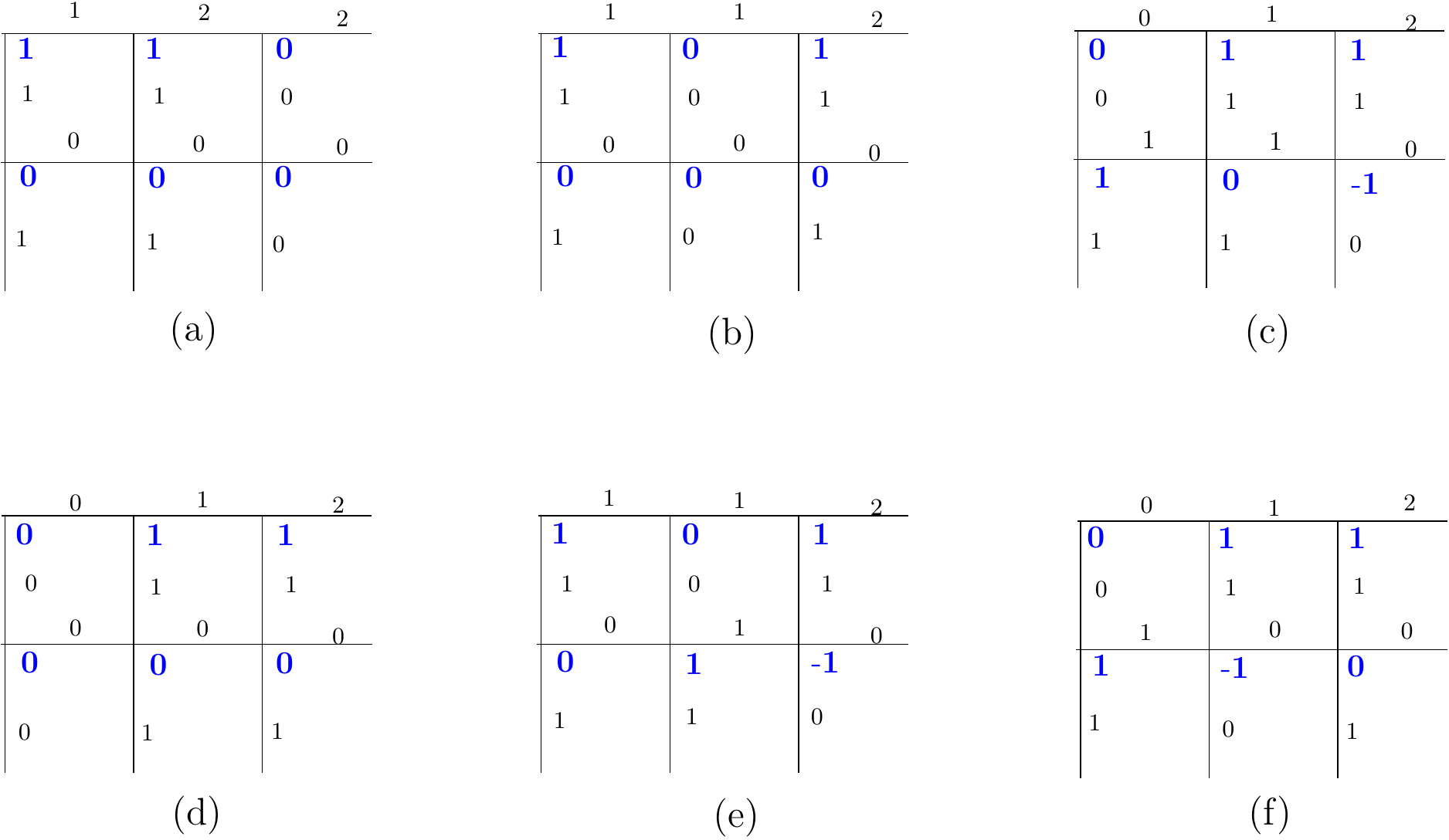}
\end{center}
\caption{The six graphs corresponding to the six sign matrices in $M([2,2],3)$; these matrices correspond to SSYT of shape $[2,2]$ with entries at most $3$.} 
\label{fig:vertices}
\end{figure}

\begin{example}
Figure~\ref{fig:vertices} gives the six graphs corresponding to the six sign matrices in $M(\lambda,3)$ for $\lambda=[2,2]$; these matrices correspond to SSYT of shape $[2,2]$ with entries at most $3$.
Let $M_e$ be the sign matrix corresponding to the graph in Figure~\ref{fig:vertices}(e). The equation for the hyperplane, $H_{M_e}$, described in Theorem~\ref{thm:lambdavertex},  is
$H_{M_e}(X)=X_{11}+(X_{11}+X_{21})+X_{13}+(X_{12}+X_{22})=2X_{11}+X_{12}+X_{13}+X_{21}+X_{22}=|\lambda|-\frac{1}{2}=3.5$. Now we substitute the entries of each matrix in $M([2,2],3)$ into this equation to show $M_e$ is the only matrix on one side of this hyperplane.

\tabbedblock{
(a): \>  $X_{11}=1, X_{12}=1, X_{13}=0, X_{21}=0, X_{22}=0$  \> $\rightarrow \;  H_{M_e}(M_a) =2+ 1 + 0 + 0 + 0$ \> =\;  3;\\ 
(b): \>  $X_{11}=1, X_{12}=0, X_{13}=1, X_{21}=0, X_{22}=0$  \> $\rightarrow \;   H_{M_e}(M_b)=2 + 0 + 1 + 0 + 0$ \> =\;  3;\\ 
(c): \>  $X_{11}=0, X_{12}=1, X_{13}=1, X_{21}=1, X_{22}=0$  \> $\rightarrow \;   H_{M_e}(M_c)=0 + 1 + 1 + 1 + 0$ \> =\;  3;\\ 
(d): \>  $X_{11}=0, X_{12}=1, X_{13}=1, X_{21}=0, X_{22}=0$  \> $\rightarrow \;   H_{M_e}(M_d)=0 + 1 + 1 + 0 + 0$ \> =\;  2;\\ 
(e): \> $X_{11}=1, X_{12}=0, X_{13}=1, X_{21}=0, X_{22}=1$  \> $\rightarrow \; H_{M_e}(M_e) = 2 + 0 + 1 + 0 + 1$ \> =\;  4;\\ 
(f): \> $X_{11}=0, X_{12}=1, X_{13}=1, X_{21}=1, X_{22}=\; $-1  \> $\rightarrow \; H_{M_e}(M_f) = 0 + 1 + 1 + 1 + $(-1) \> =\; 2.
}

Note that $M_e$ is on one side of $2X_{11}+X_{12}+X_{13}+X_{21}+X_{22}=3.5$ and the other five matrices in $M([2,2],3)$ are on the other side.
\label{ex:vertices}
\end{example}

\section{Definition and vertices of $P(m,n)$}
\label{sec:Pmn}

We will now define and study another family of polytopes, constructed using all $m \times n$ sign matrices.

\begin{definition}
Let $P(m,n)$ be the polytope defined as the convex hull of all $m\times n$ sign matrices. Call this the \emph{$(m,n)$ sign matrix polytope.}
\end{definition}

\begin{proposition}
\label{prop:pmn_dim}
The dimension of $P(m,n)$ is $mn$ for all $m > 1$.
\end{proposition}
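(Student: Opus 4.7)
The plan is to show $\dim P(m,n) = mn$ by matching the ambient dimension. Since every matrix in $M(m,n)$ is $m \times n$, the polytope sits inside $\mathbb{R}^{mn}$, giving the upper bound $\dim P(m,n) \le mn$ for free. For the lower bound I need to exhibit $mn + 1$ affinely independent points of $P(m,n)$, which by Theorem~\ref{thm:lambdavertex}-style reasoning means I should exhibit that many sign matrices. Unlike the situation in Proposition~\ref{prop:dim}, here no row-sum constraint is imposed, so no column of the matrix is forced, and I expect full dimensionality.

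The concrete construction I would use is the following. First, the all-zeros matrix $\mathbf{0}$ is a sign matrix: all column partial sums are $0 \in \{0,1\}$ and all row partial sums are $0 \ge 0$. Next, for each pair $(i_0,j_0)$ with $1 \le i_0 \le m$ and $1 \le j_0 \le n$, let $E^{(i_0,j_0)}$ denote the $m \times n$ matrix whose only nonzero entry is a $1$ in position $(i_0,j_0)$. I would verify that $E^{(i_0,j_0)}$ is a sign matrix by checking the two defining inequalities: column $j_0$ has partial sums equal to $0$ for $i < i_0$ and $1$ for $i \ge i_0$, all other columns have partial sums $0$, so (\ref{eq:sm1}) holds; and row partial sums are $0$ before column $j_0$ and $1$ afterward, so (\ref{eq:sm2}) holds.

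Finally, I would observe that the $mn$ differences $E^{(i_0,j_0)} - \mathbf{0} = E^{(i_0,j_0)}$, viewed as vectors in $\mathbb{R}^{mn}$, are exactly the standard basis vectors and hence linearly independent. This makes the collection $\{\mathbf{0}\} \cup \{E^{(i_0,j_0)} : 1 \le i_0 \le m,\, 1 \le j_0 \le n\}$ affinely independent inside $P(m,n)$, yielding $\dim P(m,n) \ge mn$ and concluding the proof. There is no real obstacle here; the only subtlety is the verification that each $E^{(i_0,j_0)}$ satisfies Definition~\ref{def:sm}, and that verification is immediate. The hypothesis $m > 1$ does not actually enter the argument; the construction works for $m = 1$ as well, where $P(1,n)$ is simply the unit cube $[0,1]^n$.
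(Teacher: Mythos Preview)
Your proof is correct and more rigorous than the paper's. The paper's entire argument is the single sentence ``Since every entry is essential, all $mn$ of the entries contribute to the dimension,'' which is really just an assertion of the result. You make that assertion precise by exhibiting $mn+1$ affinely independent sign matrices, namely $\mathbf{0}$ together with the single-entry matrices $E^{(i_0,j_0)}$, and your verification that each $E^{(i_0,j_0)}$ satisfies Definition~\ref{def:sm} is accurate. The two approaches are conceptually the same---``no entry is forced''---but you supply the missing witness. Your closing observation that the hypothesis $m>1$ is superfluous is also correct; the argument and the conclusion both hold for $m=1$.
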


\begin{proof}
Since every entry is essential, all $mn$ of the entries contribute to the dimension.
\end{proof}

\begin{theorem}
\label{thm:mnvertex}
The vertices of $P(m,n)$ are the sign matrices of size $m \times n$. 
\label{thm:mnverts}
\end{theorem}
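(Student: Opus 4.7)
The plan is to mimic the hyperplane separation argument used in the proof of Theorem~\ref{thm:lambdavertex}, constructing for each $m\times n$ sign matrix $M$ an affine hyperplane in $\mathbb{R}^{mn}$ that strictly separates $M$ from every other $m\times n$ sign matrix. Since $P(m,n)$ is by definition the convex hull of these matrices, such a separating hyperplane certifies that $M$ is a vertex.

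Following the notation of Definition~\ref{def:Xhat}, I would write $c_{ij}(X)=\sum_{i'=1}^{i}X_{i'j}$ for the column partial sums and set $C_M:=\{(i,j):c_{ij}(M)=1\}$. By Remark~\ref{remark:invertible}, $M$ is recoverable from its column partial sums, so distinct sign matrices give distinct sets $C_M$. The hyperplane used for $P(\lambda,n)$ was essentially $\sum_{(i,j)\in C_M}c_{ij}(X)=|\lambda|-\tfrac{1}{2}$; this worked because every matrix in $M(\lambda,n)$ has exactly $|\lambda|$ ones among its column partial sums. In the unrestricted setting that cardinality varies with the matrix, and in particular one could have $C_M\subsetneq C_{M'}$, in which case every term of the above sum still equals one at $X=M'$ and the functional fails to separate $M$ from $M'$. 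To correct this, I would use a signed version, penalising ones of $X$ that occur outside $C_M$:
\[
H_M(X):=\sum_{(i,j)\in C_M}c_{ij}(X)-\sum_{(i,j)\notin C_M}c_{ij}(X),
\]
and take the separating hyperplane to be $H_M(X)=|C_M|-\tfrac{1}{2}$.

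The verification is then a short calculation. Using that $c_{ij}(X)\in\{0,1\}$ whenever $X$ is a sign matrix, by property~(\ref{eq:sm1}) of Definition~\ref{def:sm}, one gets $H_M(M)=|C_M|$ and, for any other sign matrix $M'$,
\[
H_M(M')=|C_M\cap C_{M'}|-|C_{M'}\setminus C_M|=|C_M|-|C_M\setminus C_{M'}|-|C_{M'}\setminus C_M|.
\]
Because $M'\neq M$ forces $C_M\neq C_{M'}$, the symmetric difference $C_M\triangle C_{M'}$ is nonempty, so $H_M(M')\leq|C_M|-1<|C_M|-\tfrac{1}{2}<H_M(M)$, giving the required strict separation.

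The main obstacle is purely conceptual rather than computational: recognising that the hyperplane from Theorem~\ref{thm:lambdavertex} no longer separates once the shape constraint is dropped, and replacing the one-sided sum by a functional still uniquely maximised at $M$ when column partial sums can show up in more positions than $C_M$. Once the signed-sum modification is in place, the rest of the argument is structurally identical to the shape case.
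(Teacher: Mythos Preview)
Your proof is correct and is essentially the same as the paper's: your functional $H_M(X)=\sum_{(i,j)\in C_M}c_{ij}(X)-\sum_{(i,j)\notin C_M}c_{ij}(X)$ is exactly the paper's $K_M(X)$, and your set-theoretic computation $H_M(M')=|C_M|-|C_M\triangle C_{M'}|$ is a cleaner packaging of the paper's two-case analysis. Your explicit remark that the unsigned hyperplane from Theorem~\ref{thm:lambdavertex} can fail when $C_M\subsetneq C_{M'}$ is the same observation the paper makes in Example~\ref{ex:mnverts}.
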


\begin{proof}
Fix an $m \times n$ sign matrix $M$. In order to show that  $M$ is a vertex of $P(m,n)$, we need to find a hyperplane in $\mathbb{R}^{mn}$ with $M$ on one side and all the other $m\times n$ sign matrices on the other side. Then since $P(m,n)$ is the convex hull of all $m \times n$ sign matrices, $M$ would necessarily be a vertex.

Let $c_{ij}=\displaystyle\sum_{i'=1}^i X_{i'j}$ in $M$, as in Definition~\ref{def:Xhat}. 
Recall from the proof of Theorem~\ref{thm:lambdavertex} the notation $C_M=\{(i,j)\ :  \ c_{ij}=1\mbox{ in }M\}$ and $H_M(X)=\displaystyle\sum_{(i,j)\in C_M} \displaystyle\sum_{i'=1}^i X_{i'j}$.

Define a hyperplane in $\mathbb{R}^{mn}$ as follows, on coordinates $X_{ij}$ corresponding to positions in an $m\times n$ matrix.
\begin{equation}
K_M(X):=
H_M(X) - \sum_{(i,j)\not\in C_M} \sum_{i'=1}^i X_{i'j} = |C_M|-\frac{1}{2}.
\label{eq:mn_hyper}
\end{equation}

Note that $C_M$ is unique for each sign matrix $M$ since we may recover any sign matrix from its column partial sums (see Remark~\ref{remark:invertible}). Therefore $K_M$ is unique for each matrix $M$. 

We wish to show the hyperplane $K_M(X)=|C_M|-\frac{1}{2}$ has $M$ on one side and all the other $m\times n$ sign matrices on the other. 
Note that if $X=M$, then $K_M(X)= K_M(M)=|C_M|$. So we wish to show that given any $M'\in M(m,n)$ such that $M'\neq M$, $K_M(M')<|C_M|-\frac{1}{2}$.

We have two cases: 

\textit{Case 1}: There is a $(i,j)$ entry $c_{ij}=0$ in $M$ and $c_{ij}=1$ in $M'$. In this case, $(i,j)\not\in C_M$. So in $K_M(M')$, this partial sum gets subtracted making $K_M(M')$ one smaller than $K_M(M)$ for every such $(i,j)$.

\textit{Case 2}: There is a $(i,j)$ entry $c_{ij}=1$ in $M$ and $c_{ij}=0$ in $M'$. In this case, $(i,j)\in C_M$. So this partial sum contributed one to $H_M(M)$, whereas in $H_M(M')$ there is a contribution of zero. Therefore $H_M(M)$ is one greater than $H_M(M')$ so that $K_M(M)$ is one greater than $K_M(M')$ for every such $(i,j)$.

Since $M$ and $M'$ must differ in at least one column partial sum, $|C_M|=K_M(M)\geq K_M(M')+1$ so that $K_M(M')<|C_M|-\frac{1}{2}$ for all $m \times n$ sign matrices $M'$.
Thus the $m\times n$ sign matrices are the vertices of $P(m,n)$. 
\end{proof}

\begin{figure}[htbp]
\begin{center}
\includegraphics[scale=.8]{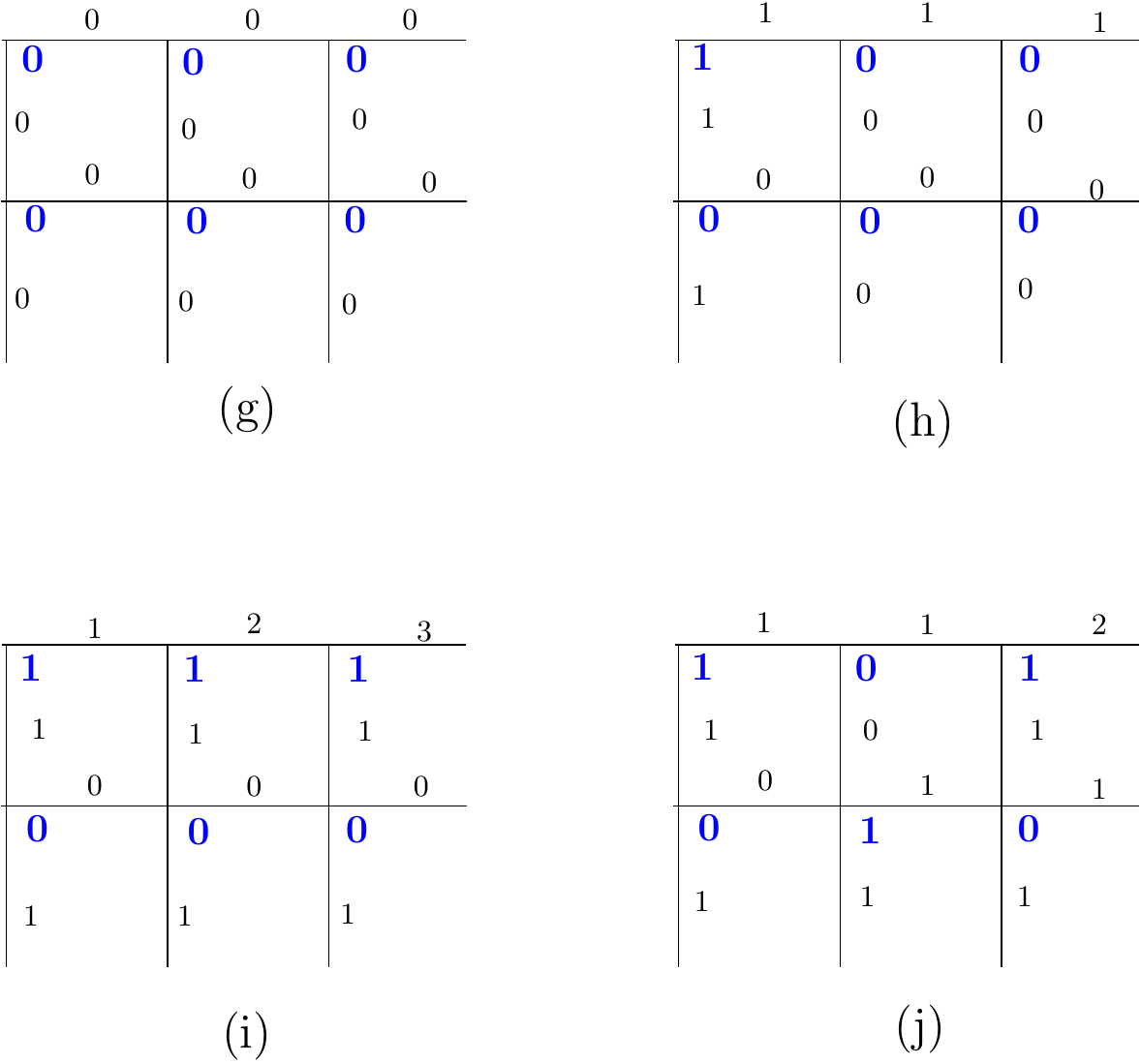}
\end{center}
\caption{Four of the $29$ partial sum graphs corresponding to the sign matrices that are vertices in $P(2,3)$ but not in $P([2,2],3)$.}
\label{fig:mnvertices}
\end{figure}

\begin{example}
\label{ex:mnverts}
Let $M_h$ be the sign matrix corresponding to the graph in Figure~\ref{fig:mnvertices}(h). So $H_{M_h}(X)=2X_{11}+X_{21}$, and therefore $H_{M_h}(M_a)=H_{M_h}(M_b)=H_{M_h}(M_e)=H_{M_h}(M_h)=H_{M_h}(M_i)=H_{M_h}(M_j)=2$. This shows that the hyperplane of Theorem~\ref{thm:lambdavertex} does not separate $M_h$ from all the other $m\times n$ sign matrices. But using Theorem~\ref{thm:mnverts}, we find the needed hyperplane to be $K_{M_h}(X)=X_{11}+(X_{11}+X_{21})-X_{12}-(X_{12}+X_{22})-X_{13}-(X_{13}+X_{23})=2X_{11}+X_{21}-2X_{12}-X_{22}-2X_{13}-X_{23}=|C_M|-\frac{1}{2}=2-\frac{1}{2}=1.5$. One may calculate the following:
$K_{M_h}(M_a) = K_{M_h}(M_b) = K_{M_h}(M_e) = 0; \; K_{M_h}(M_h) = 2; \; K_{M_h}(M_i)=-2; \; K_{M_h}(M_j)= -1$. This illustrates how the hyperplane $K_{M}(X)=|C_M|-\frac{1}{2}$  separates $M$ from the other $m\times n$ sign matrices, even though $H_{M}(X)=|C_M|-\frac{1}{2}$ fails to.
\end{example}

In the following remark, we give some properties and non-properties of  $P(m,n)$ and $P(\lambda,n)$. 

\begin{remark}
Both $P(\lambda,n)$ and $P(m,n)$ are \emph{integral polytopes}, since an integral polytope has integer values for all vertices. 
Neither $P(\lambda,n)$ nor $P(m,n)$ are \emph{regular polytopes}. 
(A regular polytope has the same number of edges adjacent to each vertex.)
For example, some of the vertices in $P(2,2)$ from Figure~\ref{fig:faces} are adjacent to 4 edges, while others are adjacent to 5 or 6 edges. 
These polytopes are not \emph{simplicial} (where every facet has the minimal number of vertices), since the facets of these polytopes have varying numbers of vertices. For example, the facets of $P(2,2)$ have between $4$ and $7$ vertices. These polytopes are not \emph{simple} (where every vertex is contained in the minimal number of facets where that number is fixed); the vertices corresponding to $\delta_5$ and $\delta_6$ in Figure~\ref{fig:faces} are contained in $20$ and $14$ facets, respectively.
\end{remark}

\section{Inequality descriptions}
\label{sec:ineq}
In analogy with the Birkhoff polytope \cite{birkhoff,vonneumann} and the alternating sign matrix polytope \cite{behrend,striker}, we find an inequality description of $P(\lambda,n)$. 

\begin{theorem}
\label{thm:ineqthmshape}
$P(\lambda,n)$ consists of all 
$\lambda_1\times n$ real matrices $X=(X_{ij})$ such that:
\begin{align}
\label{eq:eq1}
0 \leq \displaystyle\sum_{i'=1}^{i} X_{i'j} &\leq 1, &\mbox{ for all }1 \leq i\leq \lambda_1, 1\le j\le n \\
\label{eq:eq2}
0 \leq \displaystyle\sum_{j'=1}^{j} X_{ij'}, &
&\mbox{ for all }1 \leq i\leq \lambda_1, 1\le j\le n \\
\label{eq:eq3}
\displaystyle\sum_{j'=1}^n X_{ij'} &= a_{\lambda_1-i+1}, &\mbox{ for all } 1\leq i \leq \lambda_1.
\end{align}
\end{theorem}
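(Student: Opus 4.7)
The plan is to prove both containments. The easy inclusion $P(\lambda,n)\subseteq\{X : X\text{ satisfies }(\ref{eq:eq1})\text{--}(\ref{eq:eq3})\}$ is immediate: each $M\in M(\lambda,n)$ satisfies (\ref{eq:eq1}) and (\ref{eq:eq2}) by Definition~\ref{def:sm}, and (\ref{eq:eq3}) by (\ref{eq:Mij_rowsum}); these are affine conditions, hence preserved by convex combinations.

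For the reverse inclusion, I would extend von Neumann's proof of the Birkhoff--von Neumann theorem, as flagged in the introduction. Given $X$ satisfying (\ref{eq:eq1})--(\ref{eq:eq3}), I would induct on
\[
N(X) := \bigl|\{(i,j) : c_{ij}(X)\in(0,1)\}\bigr|, \qquad c_{ij}(X) := \sum_{i'=1}^i X_{i'j}.
\]
In the base case $N(X)=0$, every column partial sum lies in $\{0,1\}$, so $X_{ij}=c_{ij}(X)-c_{i-1,j}(X)\in\{-1,0,1\}$, and (\ref{eq:eq2}), (\ref{eq:eq3}) exhibit $X\in M(\lambda,n)$. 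In the inductive step, I would produce a \emph{compatible} sign matrix $M\in M(\lambda,n)$: one with $c_{ij}(M)=c_{ij}(X)$ whenever $c_{ij}(X)\in\{0,1\}$ and $r_{ij}(M)=0$ whenever $r_{ij}(X)=0$. Given such an $M$, I would set
\[
\epsilon := \min\Bigl(\min_{c_{ij}(M)=1}c_{ij}(X),\ \min_{c_{ij}(M)=0}(1-c_{ij}(X)),\ \min_{r_{ij}(M)>0}\tfrac{r_{ij}(X)}{r_{ij}(M)}\Bigr),
\]
which is strictly positive by the compatibility conditions, and let $Y:=(X-\epsilon M)/(1-\epsilon)$. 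A direct check using the fact that $X$ and $M$ share row sums shows $Y$ again satisfies (\ref{eq:eq1})--(\ref{eq:eq3}), while the maximality of $\epsilon$ forces at least one new tight constraint (and in particular, with care in the choice of $M$, a new column partial sum lying in $\{0,1\}$), so that the induction measure strictly decreases. The inductive hypothesis yields $Y\in P(\lambda,n)$, and hence $X=\epsilon M+(1-\epsilon)Y\in P(\lambda,n)$.

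The main obstacle is constructing the compatible sign matrix $M$. By Theorem~\ref{thm:MtoSSYT} this reduces to building a tableau $T\in SSYT(\lambda,n)$ whose column $\lambda_1-i+1$ contains exactly the set $\{j:c_{ij}(M)=1\}$, subject to forced inclusions from the positions where $c_{ij}(X)=1$, forced exclusions from positions where $c_{ij}(X)=0$, and a matching condition between consecutive columns at each prefix of values coming from the tight rows $r_{ij}(X)=0$. The cardinalities are automatic, since summing (\ref{eq:eq3}) over $i'\le i$ yields $\sum_j c_{ij}(X)=\lambda'_{\lambda_1-i+1}$, matching the required length of column $\lambda_1-i+1$ of $T$. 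I expect the inequalities (\ref{eq:eq1})--(\ref{eq:eq2}) for $X$ to translate precisely into the semistandard conditions (columns strictly increasing, rows weakly increasing) on the forced placements, so that a greedy column-by-column construction of $T$ succeeds, with existence at each step verifiable by a Hall-type argument on the bipartite graph of admissible values. Ensuring this combinatorial construction goes through cleanly, while handling all degenerate arrangements of tight constraints, is the central technical challenge.
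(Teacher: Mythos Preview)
Your overall strategy---peel off a compatible sign matrix $M$ and recurse on $Y=(X-\epsilon M)/(1-\epsilon)$---is different from the paper's. The paper never constructs a full vertex compatible with the tight constraints of $X$. Instead it works locally: it picks any non-integer column partial sum, follows non-integer partial sums through $\thickhat{X}$ using the identity $r_{ij}+c_{i-1,j}=c_{ij}+r_{i,j-1}$ until a circuit (closed, or open with both ends on the bottom boundary) is formed, and then perturbs $X$ by $\pm\ell$ at the circuit corners to write $X$ as a convex combination of two matrices $X^{+},X^{-}$, each with strictly more tight constraints. Iterating terminates in sign matrices. This sidesteps any global existence question.

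Your proposal has a genuine gap at exactly the point you flag as ``the central technical challenge'': the existence of a compatible $M\in M(\lambda,n)$. Observe that the statement ``for every $X$ satisfying (\ref{eq:eq1})--(\ref{eq:eq3}) there exists $M\in M(\lambda,n)$ agreeing with $X$ on all its tight constraints'' is equivalent to ``every nonempty face of the inequality-defined polytope contains a sign matrix,'' which is essentially the theorem itself. So invoking it without an independent proof is circular. Your proposed escape---build the tableau $T$ column by column, using a Hall-type argument to certify feasibility at each step---may well work, but the compatibility conditions interact across rows of the matrix (the constraints $r_{ij}(M)=0$ whenever $r_{ij}(X)=0$ couple consecutive matrix rows through prefix-cardinality equalities $|S_i\cap[j]|=|S_{i-1}\cap[j]|$), and you have not shown that the Hall condition actually holds under (\ref{eq:eq1})--(\ref{eq:eq3}). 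There is also a secondary issue: your induction measure $N(X)$ need not strictly decrease, since $\epsilon$ can be attained at a row constraint; ``with care in the choice of $M$'' is not an argument. The paper's circuit method avoids both problems at once, because every edge of the circuit already carries a non-integer label, so the perturbation is guaranteed to tighten at least one previously slack constraint, and no global compatible object needs to exist.
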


\begin{proof}
This proof builds on techniques developed by Von Neumann in his proof of the inequality description of the Birkhoff polytope \cite{vonneumann}. First we need to show that any $X\in P(\lambda,n)$ satisfies $(\ref{eq:eq1}) - (\ref{eq:eq3})$. Suppose $X \in P(\lambda,n)$. Thus $X=\displaystyle \sum_{\gamma} \mu_{\gamma} M_{\gamma}$ where $\displaystyle \sum_{\gamma}  \mu_{\gamma}=1$ and the $M_{\gamma}\in M(\lambda,n)$. Since we have a convex combination of sign matrices, by Definition~\ref{def:sm} we obtain (\ref{eq:eq1}) and (\ref{eq:eq2}) immediately.  (\ref{eq:eq3}) follows from (\ref{eq:Mij_rowsum}) in the definition of $M(\lambda,n)$ (Definition~\ref{def:MtoSSYT}). Thus $P(\lambda,n)$ fits the inequality description. 

Let $X$ be a real-valued $\lambda_1\times n$ matrix satisfying $(\ref{eq:eq1}) - (\ref{eq:eq3})$.
We wish to show that $X$ can be written as a convex combination of  sign matrices in $M(\lambda,n)$, so that it is in $P(\lambda,n)$. 
Consider the corresponding graph $\thickhat{X}$ of
Definition~\ref{def:Xhat}. 
Let $r_{i0}=0=c_{0j}$ for all $i,j$. Then for all $1\leq i\leq \lambda_1, 1\leq j\leq n$, we have $X_{ij}=r_{ij}-r_{i, j-1}=c_{ij}-c_{i-1,j}$.  Thus,
\begin{equation}
\label{eq:rc}
r_{ij}+c_{i-1,j}=c_{ij}+r_{i, j-1}.
\end{equation}
If $X$ has no non-integer partial sums, then  $X$ is a $\lambda_1 \times n$ sign matrix, since $(\ref{eq:eq1}) - (\ref{eq:eq3})$ reduce to Definitions \ref{def:sm} and \ref{def:MtoSSYT}.

So we assume $X$ has at least one non-integer partial sum $r_{ij}$ or $c_{ij}$. We may furthermore assume $X$ has at least one non-integer column partial sum, since if all column partial sums of $X$ were integers, $X_{ij}=c_{ij}-c_{i-1,j}$ would imply the $X_{ij}$ would be integers, thus all row partial sums would also be integers. 

We construct an \emph{open} or \emph{closed circuit} 
in $\thickhat{X}$ whose edges are labeled by non-integer partial sums. 
We say a \emph{closed circuit} is a simple cycle in $\thickhat{X}$, that is, it begins and ends at the same vertex with no repetitions of vertices, other than the repetition of the starting and ending vertex. We say an \emph{open circuit} is a simple path in $\thickhat{X}$ that begins and ends at different boundary vertices  along the bottom of the graph, that is, it begins at a vertex $(\lambda_1+1,j)$ and ends at vertex $(\lambda_1+1,j_0)$ for some $j_0 \neq j$. 

We create such a circuit by first constructing a path in $\hat{X}$ as follows.
If there exists $j$ such that $0<c_{\lambda_1j}<1$, we start the path at bottom boundary vertex $(\lambda_1+1,j)$. 
If there is no such $j$, we find some $c_{ij}$ such that $0<c_{ij}<1$ and start at the vertex corresponding to $X_{ij}$. 
By (\ref{eq:rc}), at least one of $c_{i\pm 1,j},r_{i,j\pm 1}$ is also a non-integer. 
Therefore, we may form a path by moving through $\thickhat{X}$ vertically and horizontally along edges labeled by non-integer partial sums. 

Now $\hat{X}$ is of finite size and all the boundary partial sums on the left, right, and top are integers (since for all $i$ and $j$, $r_{i0}=c_{0j}=0$ and  $r_{in}=a_{\lambda_1-i+1}$). So the path eventually reaches one of the following: $(1)$ a vertex already in the path,
or $(2)$ a vertex $(\lambda_1+1,j_0)$. In Case $(2)$, this means $c_{\lambda_1j_0}$ is not an integer. But the total sum of the matrix is $\displaystyle\sum_{i=1}^{\lambda_1} r_{in}=\displaystyle\sum_{i=1}^{\lambda_1} a_{\lambda_1-i+1}$. Each $a_{\lambda_1-i+1}$ is an integer, so the total sum of all matrix entries is an integer. Since $c_{\lambda_1j_0}$ is not an integer, there must be some other column sum $c_{\lambda_1j}$ that is also not an integer. By construction, the path began at a bottom boundary vertex $(\lambda_1+1,j)$ with $c_{\lambda_1 j}$ not an integer, for some $j\neq j_0$. So this process yields an open circuit whose edge labels are all non-integer. In Case $(1)$, the constructed path consists of a simple closed loop and possibly a simple path connected to the closed loop at some vertex $X_{i_0 j_0}$. We delete this path, and keep the closed loop. This process yields a closed circuit in $\thickhat{X}$ whose edge labels are all non-integer. 
See Figures~\ref{fig:opencirc} and~\ref{fig:closedcirc} for examples.

Let the following denote a circuit constructed as above, where the circled $c$ and $r$ values denote the edge labels as we traverse the circuit, and 
 the boxed $X_{ij}$'s denote the matrix entries corresponding to the vertices on the corners of the circuit where the path changes from vertical to horizontal or vice versa. (Note how the boxes and circles appear in Figures~\ref{fig:opencirc} and~\ref{fig:closedcirc}.) 

\[\left(
\circled{$c_{0}$},\ldots,\circled{$c'_{0}$},\boxed{X_{i_1,j_0}},\circled{$r_{1}$},\ldots,\circled{$r'_{1}$},\boxed{X_{i_1,j_1}},\circled{$c_{1}$},\ldots,\circled{$c'_{1}$},\boxed{X_{i_2,j_1}},\circled{$r_{2}$},\ldots\right)\]
Using this circuit, we are able to write $X$ as the convex combination of two new matrices, call them $X^+$ and $X^-$, that each have at least one more partial sum equal to its maximum or minimum possible value. 

Construct a matrix $X^+$ by setting 
\[X^+_{i_{\alpha},j_{\beta}} =
\begin{cases}
X_{i_{\alpha}j_{\beta}} + \ell^+ &\mbox{ if } \alpha+\beta \mbox{ is odd} \\
X_{i_{\alpha}j_{\beta}} - \ell^+ &\mbox{ if } \alpha+\beta \mbox{ is even} 
\end{cases}\]
and setting all other entries equal to the corresponding entry of $X$.
That is, construct $X^+$ by alternately adding and subtracting a number $\ell^+$ from each entry in $X$ that corresponds to a corner in the circuit and leaving all other matrix entries unchanged. 
We will choose $\ell^+$ to be the maximum possible value that preserves $(\ref{eq:eq1}) - (\ref{eq:eq3})$ when added and subtracted from the corners as indicated above. 
That is, $\ell^+$ equals the minimum value of the union of the following sets: 
\begin{align*}
\{&c_{ij} \ | \ \mbox{ the edge labeled by } c_{ij} \mbox{ is below a circuit corner } X_{i_{\alpha}j_{\beta}} \mbox{ with } \alpha+\beta \mbox{ even}\},\\
\{&1-c_{ij} \ | \ \mbox{ the edge labeled by } c_{ij} \mbox{ is below a circuit corner } X_{i_{\alpha}j_{\beta}} \mbox{ with } \alpha+\beta \mbox{ odd}\},\\
\{&r_{ij} \ | \ \mbox{ the edge labeled by } r_{ij} \mbox{ is to the right of a circuit corner } X_{i_{\alpha}j_{\beta}} \mbox{ with } \alpha+\beta \mbox{ even}\}.
\end{align*}
Note $\ell^+>0$ since all the partial sums in the circuit are non-integer. 

Construct a matrix $X^-$ by setting 
\[X^-_{i_{\alpha},j_{\beta}} =
\begin{cases}
X_{i_{\alpha}j_{\beta}} - \ell^- &\mbox{ if } \alpha+\beta \mbox{ is odd} \\
X_{i_{\alpha}j_{\beta}} + \ell^- &\mbox{ if } \alpha+\beta \mbox{ is even.} 
\end{cases}\]
and setting all other entries equal to the corresponding entry of $X$.
That is, construct $X^-$ by alternately subtracting and adding a number $\ell^-$ from each entry in $X$ that corresponds to a corner in the circuit and leaving all other matrix entries unchanged. 
We will choose $\ell^-$ to be the maximum possible value that preserves (\ref{eq:eq1}), (\ref{eq:eq2}), and (\ref{eq:eq3}) when subtracted and added from the corners as indicated above. That is, $\ell^-$ equals the minimum value of the union of the following sets: 
\begin{align*}\{&c_{ij} \ | \ \mbox{ the edge labeled by } c_{ij} \mbox{ is below a circuit corner } X_{i_{\alpha}j_{\beta}} \mbox{ with } \alpha+\beta \mbox{ odd}\},\\
\{&1-c_{ij} \ | \ \mbox{ the edge labeled by } c_{ij} \mbox{ is below a circuit corner } X_{i_{\alpha}j_{\beta}} \mbox{ with } \alpha+\beta \mbox{ even}\},\\
\{&r_{ij} \ | \ \mbox{ the edge labeled by } r_{ij} \mbox{ is to the right of a circuit corner } X_{i_{\alpha}j_{\beta}} \mbox{ with } \alpha+\beta \mbox{ odd}\}.
\end{align*}
Note $\ell^->0$ since all the partial sums in the circuit are non-integer. 

Now in the case of either an open or closed circuit, there will be an even number of corners 
in the circuit. Note that for open circuits, each row has an even number of corners and there will be two columns with an odd number of corners, namely the columns where the path begins and ends. Whenever there is an even number of circuit corners in a row or column, this means that the same number is alternately added to and subtracted from the corners, thus the total row or column sum is not changed. Whenever there is an odd number of circuit corners in a column, this means that the total column sum will change, however it will stay between $0$ and $1$. Thus our constructions of $X^+$ and $X^-$ above are well-defined. 

Both $X^+$ and $X^-$ satisfy (\ref{eq:eq1})--(\ref{eq:eq3}) by construction.
Also by construction, 
\[X=\frac{\ell^-}{\ell^++\ell^-}X^++\frac{\ell^+}{\ell^++\ell^-}X^-\] and $\frac{\ell^-}{\ell^++\ell^-} + \frac{\ell^+}{\ell^++\ell^-} = 1$. So $X$ is a convex combination of the two matrices $X^+$ and $X^-$ that still satisfy the inequalities and are each at least one step closer to being sign matrices, since they each have at least one more partial sum attaining its maximum or minimum bound.
Hence, by iterating this process, $X$ can be written as a convex combination of sign matrices in $M(\lambda,n)$.
\end{proof}

\begin{figure}[htbp]
\begin{multicols}{2}

\noindent \vspace{.5in}

$\left[\begin{array}{rrrr} 
.9 & 0 & .3 & .8  \\ 
0 & .1 & .6 & -.7  \\ 
0 & .9 & -.1 & .2  \\ 
\end{array}\right] \hspace{.5in} \implies $

\vspace{.2in}

 \includegraphics[scale=.7]{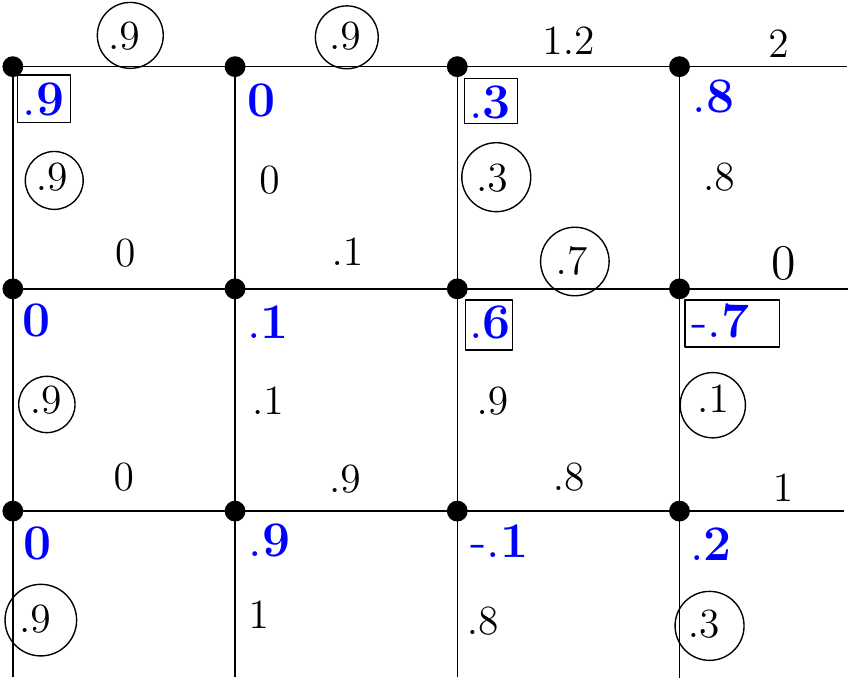}
\end{multicols}

\caption{Left: A matrix $X$ in $P([3,3,1],4)$; Right: An open circuit in $\thickhat{X}$.}
\label{fig:opencirc}
\end{figure}

\begin{figure}[htbp]
\begin{multicols}{2}

\noindent \vspace{.5in}

$\left[\begin{array}{rrrr} 
1 & 0 & 0 & 1  \\ 
0 & .4 & .6 & -1  \\ 
0 & .6 & -.6 & 0  \\ 
\end{array}\right] \hspace{.5in} \implies$

\vspace{.2in}

\includegraphics[scale=.7]{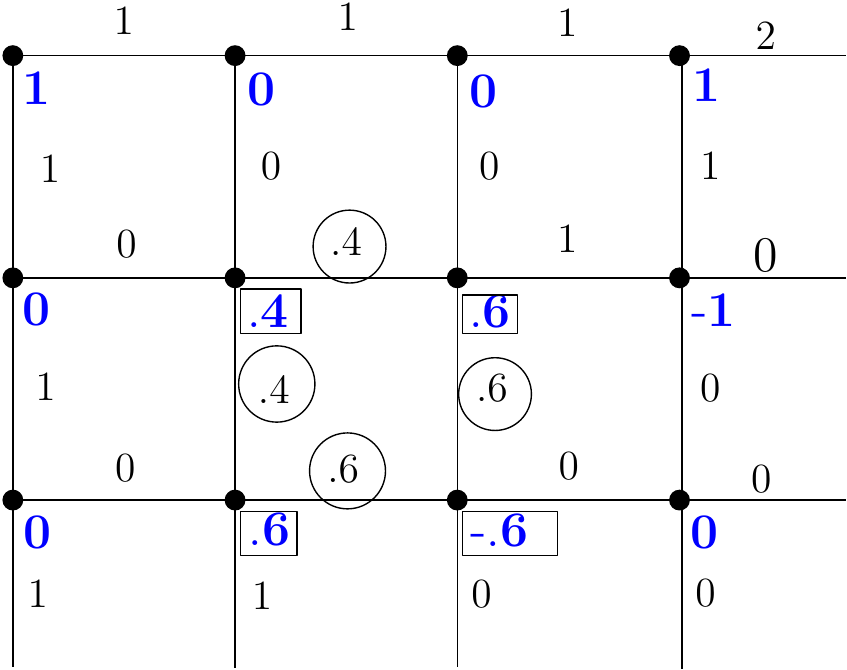} 
\end{multicols}

\caption{Left: A matrix $X$ in $P([3,3],4)$; Right: A closed circuit in $\thickhat{X}$.}
\label{fig:closedcirc}
\end{figure}

\begin{figure}[htbp]
\[\left[\begin{array}{rrrr} 
.9 & 0 & .3 & .8  \\ 
0 & .1 & .6 & -.7  \\ 
0 & .9 & -.1 & .2  \\ 
\end{array}\right] = \frac{.7}{.1+.7} \left[\begin{array}{rrrr} 
1 & 0 & .2 & .8  \\ 
0 & .1 & .7 & -.8  \\ 
0 & .9 & -.1 & .2  \\ 
\end{array}\right] + \frac{.1}{.1+.7} 
\left[\begin{array}{rrrr} .2 & 0 & 1 & .8  \\ 
0 & .1 & -.1 & 0  \\ 
0 & .9 & -.1 & .2  \\
\end{array}\right]\]
\caption{The decomposition of the matrix from Figure~\ref{fig:opencirc} as the convex combination of $X^+$ and $X^-$; see Example~\ref{ex:Xk}.}
\label{fig:Xk}
\end{figure}

\begin{example}
We use the open circuit in Figure~\ref{fig:opencirc}, we will find $X^+, X^-, \ell^+$ and $\ell^-$. The circuit is $\left(\textbf{\circled{.9}},\; \textbf{\circled{.9}},\; \textbf{\circled{.9}},\;\boxed{\color{blue} \textbf{.9}}, \;  \circled{.9},\; \circled{.9}, \;\boxed{\color{blue} \textbf{.3}},\; \textbf{\circled{.3}}, \;\boxed{\color{blue} \textbf{.6}},\;  \circled{.7}, \;\boxed{\color{blue} \textbf{-.7}},\;  \textbf{\circled{.1}},\; \textbf{\circled{.3}}\right)$, where the circled and bold entries are the partial column sums and the circled non-bold entries are the row partial sums of the circuit. The matrix entries at the corners of the circuit are boxed for emphasis. To construct $X^+$, we label the corner entries alternately plus and minus, so the plus value goes on the $\boxed{\color{blue} \textbf{.9}}$ and $\boxed{\color{blue} \textbf{.6}}$ corners and the minus on the $\boxed{\color{blue} \textbf{.3}}$ and $\boxed{\color{blue} \textbf{-.7}}$ corners. 
Looking at the partial sums, we see that $\ell^+$ will be the minimum of $\left\{.3,\; .1, \; .3 \right\}\cup\left\{1-.9,\; 1-.9,\; 1-.9\right\}\cup\emptyset$. Thus $\ell^+ =.1$, so $.1$ will be added to plus corners and subtracted from minus corners with $X^+$ as the result. We now switch the plus and minus corners. $\ell^-$ will be the minimum of $\left\{.9,\; .9,\; .9\right\}\cup\left\{1-.3,\; 1-.1, \; 1-.3 \right\}\cup\left\{.9,\; .9,\; .7 \right\}$
so $\ell^- = .7$. So then $.7$ is added to the plus corners and subtracted from the minus corners to get $X^-$. Thus we may write the matrix as the convex combination of the matrices $X^+$ and $X^-$ as in Figure~\ref{fig:Xk}.
\label{ex:Xk}
\end{example}

We now find an inequality description of $P(m,n)$.

\begin{theorem}
\label{thm:ineqthm}
$P(m,n)$ consists of all $m\times n$ real matrices $X=\{X_{ij}\}$ such that:
\begin{align}
0 &\le \displaystyle\sum_{i'=1}^{i} X_{i'j} \le 1 &\mbox{ for all } 1\le i \le m, 1\le j\le n. \\
0 &\le \displaystyle\sum_{j'=1}^{j} X_{ij'} 
&\mbox{ for all }1\le i \le m, 1 \le j \le n. 
\end{align}
\end{theorem}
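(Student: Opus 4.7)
The plan is to adapt the proof of Theorem~\ref{thm:ineqthmshape}, with simplifications reflecting the absence of the row-sum equations (\ref{eq:eq3}). For the forward containment, any $X \in P(m,n)$ is a convex combination of $m \times n$ sign matrices, each of which satisfies the two displayed inequalities by Definition~\ref{def:sm}, and these inequalities are preserved under convex combinations.

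For the reverse containment, given a real $m \times n$ matrix $X$ satisfying both inequalities, I would induct on the number of non-integer partial sums $r_{ij}, c_{ij}$ of $X$. When every partial sum is an integer, $X$ is itself an $m \times n$ sign matrix, since the inequalities then reduce to Definition~\ref{def:sm}. Otherwise, I would construct a circuit in $\thickhat{X}$ along edges labeled by non-integer partial sums, exactly as in the proof of Theorem~\ref{thm:ineqthmshape}. The key local fact is the identity $r_{ij} + c_{i-1,j} = c_{ij} + r_{i,j-1}$ at each interior vertex (equation (\ref{eq:rc})), which guarantees that if one edge incident to the vertex carries a non-integer label, then another does as well, so any partial path may be continued. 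Starting from any vertex with a non-integer incident partial sum and traversing such edges, the path either returns to a previously visited vertex (yielding a closed circuit) or reaches a boundary vertex. Since the top and left boundary labels are uniformly zero, the path cannot exit there; but, unlike in the shape case, exits on either the bottom or the right boundary are now permitted, because neither the bottom column sums $c_{mj}$ nor the right row sums $r_{in}$ are forced to be integers.

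Given such a circuit, I would form matrices $X^+$ and $X^-$ as in Theorem~\ref{thm:ineqthmshape}, alternately adding and subtracting a maximal positive value $\ell^{\pm}$ at the circuit corners, where $\ell^{\pm}$ is taken as the minimum over the adjacent partial sums $c_{ij}$, $1 - c_{ij}$, and $r_{ij}$ with signs dictated by corner parity. Non-integrality of the circuit labels forces $\ell^{\pm} > 0$. By construction both $X^+$ and $X^-$ satisfy the two inequalities of the theorem, and we obtain $X = \tfrac{\ell^-}{\ell^+ + \ell^-} X^+ + \tfrac{\ell^+}{\ell^+ + \ell^-} X^-$. Each of $X^{\pm}$ has strictly more partial sums attaining an integer bound, so iterating expresses $X$ as a convex combination of $m \times n$ sign matrices, placing $X$ in $P(m,n)$.

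The main place where this argument genuinely differs from Theorem~\ref{thm:ineqthmshape}, and the step I would double-check most carefully, is that open circuits may now terminate on the right boundary, so one endpoint column of the circuit contributes an unpaired change to a row sum $r_{in}$. In the shape case, such an unpaired change would have violated (\ref{eq:eq3}); here, however, row sums are unconstrained, and only the inequality $r_{in} \geq 0$ must be respected, which is automatically guaranteed by the definition of $\ell^{\pm}$. Consequently the parity-of-corners-per-row argument used in Theorem~\ref{thm:ineqthmshape} is not required, and the proof becomes, if anything, slightly simpler than its shape-indexed analogue.
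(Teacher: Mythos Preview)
Your proposal is correct and follows essentially the same approach as the paper's own proof. The paper likewise reduces to the argument of Theorem~\ref{thm:ineqthmshape}, noting that open circuits may now also start and end on the right boundary of $\Gamma_{(m,n)}$ (or run from the bottom to the right), and that the evenness-of-corners argument is unnecessary since there is no analogue of~(\ref{eq:eq3}) constraining the row sums.
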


\begin{proof}
The proof follows the proof of Theorem~\ref{thm:ineqthmshape}, with a few differences. The open circuits are no longer restricted to start and end at the bottom of the matrix; they may also start and end at vertices $(i,n+1)$ and $(i_0,n+1)$ ($i\neq i_0$) on the right border of $\Gamma_{(m,n)}$,
 or they may start at the bottom at vertex  $({m+1,j})$ and end on the right at vertex $(i,n+1)$. Therefore the evenness of corners is not needed here, since unlike in Theorem~\ref{thm:ineqthmshape}, there is no analogue of Equation~(\ref{eq:eq3}) that specifies the row sums. With these less restrictive exceptions, the matrices $X^-$ and $X^+$ will be found in the same way as in the proof of Theorem~\ref{thm:ineqthmshape}.
\end{proof}

\section{Facet enumerations}
\label{sec:Pmninq}
In this section, we use the inequality descriptions of the previous section to enumerate the facets in $P(m,n)$ and $P(\lambda,n)$. Note this is not as straightforward as counting the inequalities in the theorems of the previous section, as these inequality descriptions are not minimal.

\begin{theorem}
\label{thm:facets_mn}
$P(m,n)$ has 
$3mn-n-2(m-1)$ facets.
\end{theorem}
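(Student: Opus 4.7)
I will prove the formula by reconciling the list of $3mn$ inequalities in Theorem~\ref{thm:ineqthm} with the facet count: identify $n+2(m-1)$ redundant inequalities and then verify that each of the remaining $3mn-n-2(m-1)$ inequalities is facet-defining. Since $\dim P(m,n)=mn$ by Proposition~\ref{prop:pmn_dim}, the latter step reduces to showing that each such inequality cuts out a face of dimension $mn-1$.

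\textbf{Redundant inequalities.} I claim that the following three disjoint families are redundant:
\begin{enumerate}
\item[(R1)] the first-row row partial sums $\sum_{j'=1}^{j} X_{1j'}\ge 0$ for $j=1,\ldots,n$, each obtained by summing the column bounds $X_{1j'}\ge 0$ (the $i=1$ case of the column lower bounds);
\item[(R2)] the first-column partial sums $\sum_{i'=1}^{i} X_{i'1}\ge 0$ for $i=2,\ldots,m$, each obtained by adding $X_{11}\ge 0$ to the single-entry row bounds $X_{i'1}\ge 0$ for $i'=2,\ldots,i$;
\item[(R3)] the first-column partial sum upper bounds $\sum_{i'=1}^{i} X_{i'1}\le 1$ for $i=1,\ldots,m-1$, each obtained from $\sum_{i'=1}^{m} X_{i'1}\le 1$ by subtracting the non-negative quantities $X_{i'1}$ for $i'=i+1,\ldots,m$.
\end{enumerate}
These three families are disjoint and together contain exactly $n+2(m-1)$ inequalities, each a non-negative combination of other listed inequalities. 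Removing them leaves $3mn-n-2(m-1)$ inequalities in the description.

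\textbf{Facet verification.} The surviving inequalities split into three types, indexed by position $(i,j)$: column upper bounds $c_{ij}\le 1$ except when $j=1$ and $i<m$; column lower bounds $c_{ij}\ge 0$ except when $j=1$ and $i>1$; and row lower bounds $r_{ij}\ge 0$ with $i\ge 2$. For each surviving inequality I plan to exhibit $mn$ affinely independent sign matrices in $M(m,n)$ at which the inequality is tight, forcing the defining face to have dimension $mn-1$ and therefore be a facet. The natural recipe is to fix a canonical base sign matrix lying on the face and produce the other vertices by local modifications elsewhere in the matrix (for instance, inserting a $(+1,-1)$ pair in a distant column, or toggling a single $0/1$ entry in a row that does not affect the active partial sum), then order the positions so that the differences from the base form a triangular system.

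\textbf{Main obstacle.} The principal difficulty lies in the facet-verification step. The constructions of $mn$ affinely independent vertices on each face must respect all sign-matrix axioms simultaneously, and boundary positions (top row, bottom row, leftmost and rightmost columns) in particular require delicate treatment so that a proposed perturbation does not inadvertently violate some other partial-sum constraint. I expect the redundancy half of the argument to be short, while the facet count will likely decompose into several explicit cases depending on the type of inequality and the location of $(i,j)$ relative to the boundary of the matrix.
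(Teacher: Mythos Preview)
Your redundancy argument (R1)--(R3) is exactly the one in the paper, down to the same justifications.

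Where you diverge is in the facet-verification step. The paper does \emph{not} produce $mn$ affinely independent vertices on each face. Instead it argues as follows: since each remaining equality is a single linear constraint on a full-dimensional polytope, the corresponding face has dimension at most $mn-1$; it then shows that no remaining inequality is redundant by constructing, for every ordered pair of remaining equalities $(\alpha,\beta)$, an explicit sign matrix that satisfies $\alpha$ with equality but not $\beta$. This forces each face to be a facet and all of them to be distinct. The casework is organized by the values $\gamma,\delta\in\{0,1\}$ on the right-hand sides and by whether $\alpha,\beta$ are row- or column-type, and in each case the witness matrix has at most three nonzero entries.

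Your route is correct in principle, but it is heavier: you must manufacture $mn$ carefully chosen vertices per inequality and then argue affine independence, with separate boundary cases. The paper's pairwise-separation argument trades that for a handful of two-or-three-entry constructions and avoids any independence bookkeeping. If you carry out your plan you will get a valid proof, but you may find it cleaner to switch to the irredundancy-by-separation strategy once you see how short the witness matrices can be.
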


\begin{proof}
We have three defining inequalities in the inequality description of Theorem~\ref{thm:ineqthm} for each entry $X_{ij}$ of $X\in P(m,n)$: $0 \leq \displaystyle\sum_{i'=1}^{i} X_{i'j}$, $\displaystyle\sum_{i=1}^{i} X_{i'j} \leq 1$, and $0 \leq \displaystyle\sum_{j'=1}^{j} X_{ij'}$. Therefore there are at most $3mn$ facets, each made by turning one of the inequalities to an equality. We now determine which of these inequalities give \emph{unique} facets. (See Figure~\ref{fig:facet_mn} for a visual representation of which inequalities determine duplicate facets.)

Notice first that we will always have $0 \leq X_{1j}$ (from the column partial sums). This implies that the partial sums of the first row are all nonnegative, since each entry in the first row must be nonnegative.
Thus the inequalities $0 \leq \displaystyle\sum_{j=1}^{j} X_{1j'}$ for  $1\leq j\leq n$ are all unnecessary; and there are $n$ inequalities of this form.

We have already counted $0 \leq X_{11}$ in the column partial sums. From the partial row sums, we have that $0 \leq X_{21}$. But in the partial column sum we have $0 \leq X_{11}+X_{12}$; this is implied by $0 \leq X_{11}$ and $0 \leq X_{21}$. 
Similarly, the partial column sums $0 \leq \displaystyle\sum_{i'=1}^{i} X_{i'1}$ for $2\leq i\leq m$ are all implied by the partial row sums $0 \leq X_{i'1}$. There are $m-1$ inequalities of this form.

Note that $\displaystyle\sum_{i'=1}^{m} X_{i'1} \leq 1$. Furthermore, note that $0 \leq X_{m1}$ from the row partial sums. Therefore we have that $\displaystyle\sum_{i'=1}^{m-1} X_{i'1 }\leq 1 - X_{m1} \leq 1$. Similarly, the $m-1$ inequalities in the form of  $\displaystyle\sum_{i'=1}^{i} X_{i'1} \leq 1$ for $1\leq i < m$ are all implied by the partial row sums $0 \leq X_{i'1}$.

Therefore we have the number of facets to be at most $3mn-n-2(m-1)$. We claim this upper bound is the facet count. That is, a facet can be defined as all ${X} \in P(m,n)$ which satisfy exactly one of the following:
\begin{align}
\label{eq:f1}
r_{ij} &= \displaystyle\sum_{j'=1}^{j} X_{ij'} = 0, \hspace{.3in} & 2 \leq i \leq m \text{ and }
1 \leq j \leq n \\
\label{eq:f2}
c_{ij} &= \displaystyle\sum_{i'=1}^{i} X_{i'j} =0, \hspace{.3in} & 1 \leq i \leq m \text{ and } 2 \leq j \leq n 
\\
\label{eq:f3}
c_{ij} &=\displaystyle\sum_{i'=1}^{i} X_{i'j}=1, \hspace{.3in} & 1 \leq i \leq m \text{ and } 2 \leq j \leq n \\
\label{eq:f4}
r_{11} &= c_{11} = X_{11} =0 \\
\label{eq:f5}
c_{m1} &= \displaystyle\sum_{i'=1}^{m} X_{i'1}=1.
\end{align}

Note each equality fixes exactly one entry, thus lowering the dimension by one. 
Let two generic equalities of the  form (\ref{eq:f1})-(\ref{eq:f5}) be denoted as
$\alpha_{ij}=\gamma$ 
and $\beta_{de}=\delta$ for $\alpha,\beta\in\{r,c\}$ and $\gamma,\delta\in\{0,1\}$, where the choice of $r$ or $c$ for each of $\alpha$ and $\beta$ indicates whether the equality involves a row partial sum $r_{ij}$ or column partial sum $c_{ij}$, and the indices $(i,j)$ and $(d,e)$ must be in the corresponding ranges indicated by (\ref{eq:f1})-(\ref{eq:f5}). To finish the proof, we construct an $m\times n$ sign matrix $M$, such that $M$ satisfies $\alpha_{ij}=\gamma$ and not $\beta_{de}=\delta$. 
We work with $\thickhat{M}$ rather than $M$ itself, recalling the bijection between $M$ and $\thickhat{M}$. Recall from Definition~\ref{def:Xhat}, $\thickhat{M}$ is a graph whose horizontal edges are labeled by the partial row sums of $M$ and whose vertical edges are labeled by the partial column sums of $M$. Since all of the equalities in (\ref{eq:f1})-(\ref{eq:f5}) are given by setting a $c_{ij}$ equal to 0 or 1 or a $r_{ij}$ equal to 0, set the edge label of $\thickhat{M}$ corresponding to $\alpha_{ij}$ equal to $\gamma$ and the edge label corresponding to the equality $\beta_{de}$ equal to $1-\delta$. Now we transform $\thickhat{M}$ back to $M$ and if we can fill in the rest of the matrix so it is a sign matrix, the proof will be complete. In the cases below, we construct such a sign matrix $M$ satisfying equality $\alpha$ and not equality $\beta$.

\smallskip
\emph{Case 1}: $\alpha_{ij}=0$ and $\beta_{de}=1$. So in $\thickhat{M}$,  $\beta_{de}=0$. It suffices to set $M$ equal to the zero matrix.

\smallskip
\emph{Case 2:} $\alpha_{ij}=0$ and $\beta_{de}=0$. So in $\thickhat{M}$,  $\beta_{de}=1$. If $i\neq d$ and $j\neq e$, let $M_{de}=1$ and the rest of the entries equal to zero. 

Suppose $\alpha=\beta=c$. If $j\neq e$, let $M_{de}=1$ and the rest of the entries equal to zero. If $j=e$ and $i<d$,  let $M_{de}=1$ and the rest of the entries equal to zero. If $j=e$ and $i>d$,  let $M_{de}=1$, $M_{d+1,e}=-1$,  $M_{d+1,e-1}=1$, and the rest of the entries equal to zero. (Note $e\geq 2$ since $\beta=c$.)

Suppose $\alpha=\beta=r$. If $i\neq d$, let $M_{de}=1$ and the rest of the entries equal to zero. If $i=d$ and $j<e$, let $M_{de}=1$ and the rest of the entries equal to zero. If $i=d$ and $j>e$, let $M_{de}=1$, $M_{d,e+1}=-1$,  $M_{d-1,e+1}=1$, 
and the rest of the entries equal to zero. Note since $\beta=r$, $d\geq 2$, so $d-1\geq 1$. 

If $\alpha=r$ and $\beta=c$, let $M_{1e}=1$ and the rest of the entries equal to zero. (Note since $\alpha=r$, $i\geq 2$.)

If $\alpha=c$ and $\beta=r$, let $M_{d1}=1$ and the rest of the entries equal to zero.  (Note since $\alpha=c$, $j\geq 2$.)

\smallskip
\emph{Case 3:} $\alpha_{ij}=1$ and $\beta_{de}=1$. So in $\thickhat{M}$,  $\beta_{de}=0$. Note only column partial sums are set equal to 1 in the above list of equalities, so $\alpha=c$ and $\beta=c$. If $j\neq e$, set $M_{ij}=1$ and the rest of the entries of $M$ equal to zero. If $j=e$ and $i<d$, set $M_{ij}=M_{i+1,j-1}=1$ and $M_{i+1,j}=-1$ and all other entries equal to zero. Note $j-1\geq 1$ since (\ref{eq:f3}) requires that $2\leq j\leq n$.
If $j=e$ and $i>d$, set $M_{ij}=1$ and the rest of the entries of $M$ equal to zero.

\smallskip
\emph{Case 4:} $\alpha_{ij}=1$ and $\beta_{de}=0$. So in $\thickhat{M}$,  $\beta_{de}=1$. Note $\alpha=c$, so $j\geq 2$. 
If $j\neq e$, let $M_{ij}=M_{de}=1$ and the rest of the entries zero. If $j=e$ and $\beta=c$, let $M_{1j}=1$ and the rest of the entries equal to zero. If $j=e$ and $\beta=r$, if $i\neq d$, let $M_{ij}=1$ and $M_{d1}=1$ (we noted above that $j\geq 2$, so these ones are not in the same column) and the rest of the entries equal to zero. If $j=e$, $\beta=r$, and $i=d$, set $M_{ij}=1$ and the rest of the entries equal to zero.

\smallskip
Thus we may always complete to a sign matrix. 
$M$ is constructed to satisfy $\alpha_{ij}=\gamma$ but not $\beta_{ij}=\delta$, thus each of the equalities in (\ref{eq:f1})-(\ref{eq:f5}) gives rise to a unique facet.
\end{proof}

\begin{figure}[htbp]
\begin{center}
\includegraphics[scale=.6]{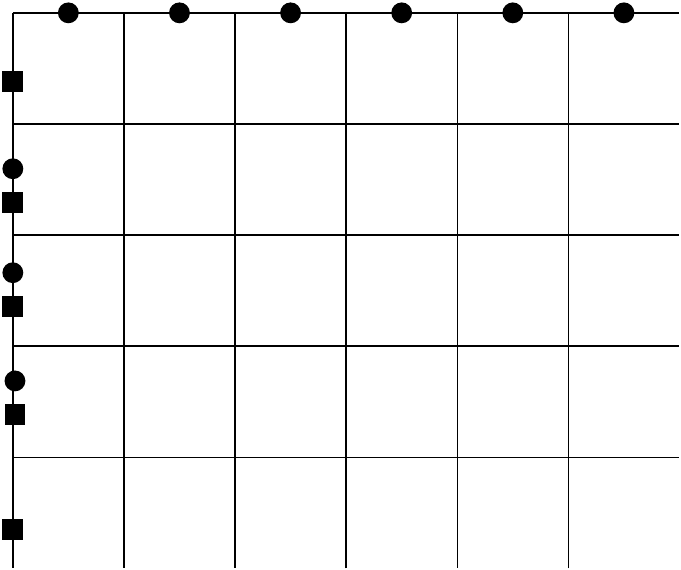}
\end{center}
\caption{$\Gamma_{(m,n)}$ decorated with symbols that represent the inequalities that do not determine facets of $P(m,n)$. Squares represent partial column sums of the form $\sum X_{ij}\leq 1$ and dots represent partial row or column sums of the form $\sum X_{ij}\geq 0$.}
\label{fig:facet_mn}
\end{figure}

We now state a theorem on the number of facets of $P(\lambda,n)$. We then give simpler formulas as corollaries in the special cases of two-row shapes, rectangles, and hooks. First, recall that $k$ is the number of parts of $\lambda$, and that $a_{\lambda_1}$ is the number of parts in $\lambda$ of size $\lambda_1$.

\begin{theorem}
\label{thm:facets_lambda}
The number of facets of $P(\lambda,n)$ is: 
\begin{center}
\begin{equation}
\label{eq:facets}
3n\lambda_1-n-3(\lambda_1-1)-(n-2)(\lambda_1-\lambda_2+\lambda_{n-1})
-(k-a_{\lambda_1})
-2(\lambda_1-D(\lambda)) - C(\lambda)
\end{equation}
   \end{center}
where $D(\lambda)$ is the number of distinct part sizes of $\lambda$ (each part size counts once, even though there may be multiple parts of a given size), we take $\lambda_i=0$ if $k<i$, and $C(\lambda)$ equals the following:
\[C(\lambda)=\begin{cases} 
      2 & \mbox{ if } \ k=1, \\
      1 & \mbox{ if } \ 1<k<n-1 \ \mbox{and} \ \lambda_1\neq\lambda_2, \\
      0 & \mbox{ if } \ 1<k<n-1 \ \mbox{and} \ \lambda_1=\lambda_2, \\
      2 & \mbox{ if } \ k=n-1 \ \mbox{and either} \ \lambda_1\neq\lambda_2 \ \mbox{or} \ \lambda=\lambda_1^{k}, \\
      1 & \mbox{ if } \ k=n-1, \ \lambda_1=\lambda_2, \ \mbox{and} \ \lambda\neq \lambda_1^{k}. \\
   \end{cases}\]
\end{theorem}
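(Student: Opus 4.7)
The plan is to adapt the strategy used for Theorem~\ref{thm:facets_mn}. I would start from the $3\lambda_1 n$ inequalities of Theorem~\ref{thm:ineqthmshape} together with the $\lambda_1$ row sum equalities $\sum_{j'=1}^n X_{ij'}=a_{\lambda_1-i+1}$, then (i) identify the redundant inequalities until the remaining count equals (\ref{eq:facets}), and finally (ii) show that every surviving inequality defines a genuine facet by producing, for any two of them $\alpha$ and $\beta$, a matrix $M\in M(\lambda,n)$ tightening $\alpha$ and not $\beta$.

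For step (i) I would organize the analysis by the summands in (\ref{eq:facets}). The $n$ term and most of the $3(\lambda_1-1)$ term mirror the $P(m,n)$ case: the row partial sums in row $1$ are implied by the column inequalities $0\le X_{1j}$, and the column partial sum inequalities in column $1$ (both upper and lower) follow from the row partial sums $0\le X_{i1}$. The $(n-2)(\lambda_1-\lambda_2+\lambda_{n-1})$ term is meant to handle rows whose prescribed sum $a_{\lambda_1-i+1}$ corresponds to a column of $\lambda$ that is either absent (a drop to length $0$, contributing $\lambda_1-\lambda_2$) or of full length $n$ (contributing $\lambda_{n-1}$): in such rows the row sum equation plus the boundary inequalities pin down $n-2$ of the intermediate partial sums automatically. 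The $(k-a_{\lambda_1})$ term accounts for the rows $i$ with $a_{\lambda_1-i+1}\neq 0$, where the row sum equality forces one column partial sum to be exactly $1$ at a position already controlled by a row inequality. The $2(\lambda_1-D(\lambda))$ term comes from the rows whose prescribed sum is inherited from the row above (i.e. adjacent columns of $\lambda$ have equal length), each contributing two redundancies. Finally $C(\lambda)$ collects the five boundary corrections depending on whether $k=1$, $k=n-1$, $\lambda_1=\lambda_2$, or $\lambda=\lambda_1^k$; each of these is a small direct check and accounts for the interaction between the first/last row and the top and bottom borders of $\Gamma_{(\lambda_1,n)}$. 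A decorated grid picture analogous to Figure~\ref{fig:facet_mn} should make these redundancies geometrically visible rather than requiring purely algebraic bookkeeping.

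For step (ii) I would argue, in parallel to Cases $1$--$4$ of Theorem~\ref{thm:facets_mn}, that for any pair of surviving equalities $\alpha_{ij}=\gamma$ and $\beta_{de}=\delta$ (each a column partial sum equal to $0$ or $1$, or a row partial sum equal to $0$) there is a sign matrix in $M(\lambda,n)$ satisfying the first and not the second. The constructions from that proof (single $+1$ entries, or the $+1,-1,+1$ L-shaped corrections) can be reused, adjusted by adding extra $+1$ entries in positions disjoint from $(i,j)$ and $(d,e)$ so that the prescribed row sums $a_{\lambda_1-i+1}$ are met. Because $k<n$, there is always at least one column free of the cells involved in $\alpha$ and $\beta$ in which to place the needed $+1$'s, and by Theorem~\ref{thm:MtoSSYT} one may alternatively build $M$ from a SSYT of shape $\lambda$ chosen to realize the desired partial sums.

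The main obstacle will be the case analysis in step (i). The count (\ref{eq:facets}) is a piecewise-linear function of $\lambda_1,\lambda_2,\lambda_{n-1},k$, and $D(\lambda)$, and the five-case definition of $C(\lambda)$ warns that several boundary regimes must be separated: $k=1$ (single-row shapes, where the bottom and top of $\Gamma_{(\lambda_1,n)}$ interact), $k=n-1$ (shapes with a near-full column, where length-$(n-1)$ columns become forced), rectangular shapes $\lambda=\lambda_1^k$ (with extra symmetry making an otherwise distinct facet coincide with another), and the distinction $\lambda_1=\lambda_2$ versus $\lambda_1\neq\lambda_2$ (which controls whether the first matrix row has a row sum forcing additional structure). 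Carefully exhibiting the avoidance of double-counting across these cases, and confirming that no further hidden implication exists among the remaining inequalities, is where the bulk of the work will lie.
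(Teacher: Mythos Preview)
Your overall strategy --- start from the $3\lambda_1 n$ inequalities, peel off redundant ones summand by summand to match (\ref{eq:facets}), and then verify the survivors via the $\alpha/\beta$ separation argument --- is exactly the paper's approach. However, your attributions of several of the individual terms are wrong, and if you proceed with those interpretations the bookkeeping will not close.

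For $(n-2)(\lambda_1-\lambda_2+\lambda_{n-1})$: this is not about individual rows with special row sums. The $\lambda_1-\lambda_2$ part removes the \emph{column} upper bounds $c_{ij}\le 1$ for $1\le i\le \lambda_1-\lambda_2$ and $2\le j\le n-1$, via a cumulative argument: when $\lambda_1\neq\lambda_2$ the first $\lambda_1-\lambda_2$ matrix rows together sum to~$1$, and decomposing that total as a sum of nonnegative row-initial and column partial sums forces each such $c_{ij}\le 1$ automatically. The $\lambda_{n-1}$ part is nonzero only when $k=n-1$ (there are no columns ``of full length $n$'' since $k<n$); it is the dual statement that the last $\lambda_{n-1}$ matrix rows sum to $0$ while the whole matrix sums to $n-1$, forcing the lower bounds $c_{ij}\ge 0$ for those rows.

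For $(k-a_{\lambda_1})$: these are redundant \emph{row} lower bounds $r_{ij}\ge 0$, not column equalities. In a row $i>1$ with $a_{\lambda_1-i+1}>0$, the fixed row sum together with $X_{i,n-z+1}+\cdots+X_{in}\le z$ makes $r_{i,\,n-z}\ge 0$ automatic for $1\le z\le a_{\lambda_1-i+1}$; summing those counts over $i>1$ gives $\sum_{\ell<\lambda_1}a_\ell=k-a_{\lambda_1}$, not one redundancy per row.

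For $2(\lambda_1-D(\lambda))$: these redundancies all live in column~$n$, not in generic positions. When $a_{\lambda_1-i+1}=0$ (row sum zero), one shows that both $c_{in}\le 1$ and $c_{i-1,n}\ge 0$ are implied; there are $\lambda_1-D(\lambda)$ such rows.

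With these corrected mechanisms the count matches (\ref{eq:facets}), and the $C(\lambda)$ corner cases are then four specific border edges (an extra $c_{1n}\le 1$ when $\lambda_1\neq\lambda_2$; $c_{\lambda_1,1}\le 1$ when $k=1$; $c_{\lambda_1,n}\ge 0$ when $k=n-1$; and $X_{11}\ge 0$ when $\lambda=\lambda_1^{n-1}$), with care taken not to double-discard when two of these coincide. Your plan for step~(ii) is fine and is what the paper does.
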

\begin{proof}
By Theorem~\ref{thm:facets_mn}, since $P(\lambda,n)$ satisfies all the inequalities satisfied by $P(m,n)$ for $m=\lambda_1$, we have at most $3n\lambda_1 - n -2(\lambda_1-1)$ facets, given by the equalities (\ref{eq:f1})--(\ref{eq:f5}). See Figure~\ref{fig:facet_mn}.

But note equalities of the form (\ref{eq:f1}) with $j=n$ no longer give facets, since by (\ref{eq:eq3}) the total sum of each matrix row is fixed. There are $\lambda_1-1$ such inequalities, so we now have at most $3\lambda_1 n - n -3(\lambda_1-1)$ facets. See Figure~\ref{fig:facet_shape1}.

To prove our count in (\ref{eq:facets}), we determine which of the remaining equalities in (\ref{eq:f1})--(\ref{eq:f5}) are unnecessary. We discuss each remaining term of (\ref{eq:facets}) below. Let $X\in P(\lambda,n)$. 

\begin{enumerate}
\item
\label{bullet1}$-{(n-2)(\lambda_1-\lambda_2+\lambda_{n-1})}$:
First, suppose $\lambda_1\neq \lambda_2$, otherwise $(n-2)(\lambda_1-\lambda_2)=0$. Since $\lambda_1\neq\lambda_2$, the first row of $X$ sums to 1 and the next $\lambda_1-\lambda_2-1$ rows sum to 0. So the first $i$ rows all together sum to 1 for any $1\leq i\leq\lambda_1-\lambda_2$. That is, for any fixed $i\in[1,\lambda_1-\lambda_2]$, $\displaystyle\sum_{i'=1}^{i}\displaystyle\sum_{j'=1}^n X_{i'j'}=1$. Also, by (\ref{eq:eq1}), $\displaystyle\sum_{i'=1}^{i} X_{i'j}\geq 0$, and by (\ref{eq:eq2}), $\displaystyle\sum_{j'=1}^{j} X_{ij'}\geq 0$.  So we have the following sum:
\[1=\displaystyle\sum_{i'=1}^{i}\displaystyle\sum_{j'=1}^n X_{i'j'}=\displaystyle\sum_{i'=1}^{i}\underbrace{\displaystyle\sum_{j'=1}^{j-1} X_{i'j'}}_{\geq 0}+\displaystyle\sum_{j'=j}^{n} \underbrace{\displaystyle\sum_{i'=1}^{i} X_{i'j'}}_{\geq 0}.
\]
Since we have all positive terms summing to 1, none of these terms may exceed 1. Therefore, $\displaystyle\sum_{i'=1}^{i} X_{i'j}\leq 1$ for all $1\leq i\leq\lambda_1-\lambda_2$, $1\leq j\leq n$. 

Thus the partial sums of the form $\displaystyle\sum_{i'=1}^i X_{i'j}\leq 1$ for $1\leq i\leq\lambda_1-\lambda_2$, $1\leq j\leq n$ are unnecessary. We have already disregarded these inequalities for $j=1$, $1\leq i\leq n-1$ in Theorem~\ref{thm:facets_mn}. We will consider $j=1$, $i=n$ in (\ref{bullet4}). We will count the partial column sums in the $n$th column in (\ref{bullet3}). Thus, for this term we count the $(n-2)(\lambda_1-\lambda_2)$ unnecessary inequalities $\displaystyle\sum_{i'=1}^i X_{i'j}\leq 1$ for $1\leq i\leq\lambda_1-\lambda_2$, $2\leq j\leq n-1$. 

Now suppose $k=n-1$ so that $\lambda_{n-1}\neq 0$, otherwise $(n-2)\lambda_{n-1}=0$. 
Since $\lambda_{n-1}\neq 0$, the last $\lambda_{n-1}$ rows of $X$ sum to $0$. That is, for any fixed $i\in[\lambda_1-\lambda_{n-1}+1,\lambda_1]$, $\displaystyle\sum_{i'=i+1}^{\lambda_1}\displaystyle\sum_{j'=1}^n X_{i'j'}=0$. Also, by (\ref{eq:eq3}), $\displaystyle\sum_{i'=1}^{\lambda_1}\displaystyle\sum_{j'=1}^n X_{i'j'}=\displaystyle\sum_{i'=1}^{\lambda_1}a_{\lambda_1-i+1}=k=n-1$, since $\lambda$ has $n-1$ parts.
Also, by (\ref{eq:eq1}), $\displaystyle\sum_{i'=1}^{i} X_{i'j}\leq 0$.
So we have the following sum:
\[n-1=\displaystyle\sum_{j'=1}^n\displaystyle\sum_{i'=1}^{\lambda_1}X_{i'j'}=\displaystyle\sum_{j'=1}^n\underbrace{\displaystyle\sum_{i'=1}^{i} X_{i'j'}}_{\leq 1}+\underbrace{\displaystyle\sum_{j=1}^{n} \displaystyle\sum_{i'=i+1}^{\lambda_1} X_{i'j'}}_{=0}
\]
Since we have $n$ terms $\displaystyle\sum_{i'=1}^{i} X_{i'j'}$  summing to $n-1$, each at most $1$, none of these terms may be negative. Therefore, $\displaystyle\sum_{i'=1}^{i} X_{i'j}\geq 0$ for all $\lambda_1-\lambda_{n-1}+1\leq i\leq\lambda_1$, $1\leq j\leq n$. 

Thus the partial sums of the form $\displaystyle\sum_{i'=1}^i X_{i'j}\geq 0$ for $\lambda_1-\lambda_{n-1}+1\leq i\leq\lambda_1$, $1\leq j\leq n$ are unnecessary. We have already disregarded these inequalities for $j=1$, $2\leq i\leq n$ in Theorem~\ref{thm:facets_mn}. 
We will count the partial column sums in the $n$th column in (\ref{bullet3}). Thus, for this term we count the $(n-2)\lambda_{n-1}$ unnecessary inequalities $\displaystyle\sum_{i'=1}^i X_{i'j}\leq 1$ for $\lambda_1-\lambda_{n-1}+1\leq i\leq\lambda_1$, $2\leq j\leq n-1$. See Figure~\ref{fig:facet_shape1}.

\smallskip
\item \label{bullet2}$-(k-a_{\lambda_1})$:
Let $i>1$.
By (\ref{eq:eq3}), $\displaystyle\sum_{j'=1}^{n} X_{i j'}= a_{\lambda_1-i+1}$. Now $0\leq \displaystyle\sum_{i'=1}^{i-1} X_{in}$ and $\displaystyle\sum_{i'=1}^{i} X_{in}\leq 1$ imply $X_{in}\leq 1$, so we have $\displaystyle\sum_{j'=1}^{n-1} X_{i j'}\geq a_{\lambda_1-i+1}-1$. 
This implies the inequality $\displaystyle\sum_{j'=1}^{n-1} X_{i j'}\geq 0$ whenever $a_{\lambda_1-i+1}>0$. Similarly, $\displaystyle\sum_{j'=1}^{n-z} X_{i j'}\geq a_{\lambda_1-i+1}-z$ for all $1\leq z\leq a_{\lambda_1-i+1}$ since the last $z$ entries in that row sum to at most $z$ (since entries can be no more than 1, by the column partial sums). Thus, the $a_{\lambda_1-i+1}$ inequalities $\displaystyle\sum_{j'=1}^{n-z} X_{i j'}\geq 0$, $1\leq z\leq a_{\lambda_1-i+1}$, are unnecessary. By reindexing, this is equivalent to $\displaystyle\sum_{j'=1}^{j} X_{i j'}\geq 0$, $n-a_{\lambda_1-i+1}\leq j\leq n-1$.

We already discarded all the row partial sum inequalities in the first row in Theorem~\ref{thm:facets_mn}, so we do not count those here. Thus $a_{\lambda_1}$ is not included.
So we have ${\displaystyle\sum_{i'=1}^{\lambda_1-1} a_{i'}}$ unnecessary partial sum inequalities. This equals the total number of parts of $\lambda$ minus the number of parts with part size $\lambda_1$, that is, $k-a_{\lambda_1}$. See Figure~\ref{fig:facet_shape3}.

\smallskip
\item \label{bullet3} $-2(\lambda_1-D(\lambda))$:
Suppose $a_{\lambda_1-i+1}=0$ so that the total sum of row $i$ of $X$ equals 0. Then the last entry $X_{in}$ may not be greater than $0$, since this would contradict $\displaystyle\sum_{j'=1}^{n-1} X_{ij'}\geq 0$. So the inequality $\displaystyle\sum_{i'=1}^i X_{i'n} \leq 1$ is unnecessary.
Also, since the total sum of row $i$ of $X$ equals 0, we have
then $X_{in}= -\displaystyle\sum_{j=1}^{n-1} X_{ij}$. In addition, $\displaystyle\sum_{i'=1}^i X_{i'n}\geq 0$. We substitute the previous equality into this inequality to obtain $\displaystyle\sum_{i'=1}^{i-1} X_{i'n}-\displaystyle\sum_{j=1}^{n-1} X_{ij}\geq 0$. We know $\displaystyle\sum_{j=1}^{n-1} X_{ij}\geq 0$, so this implies $\displaystyle\sum_{i'=1}^{i-1} X_{i'n}\geq 0$. 

So for each $a_{\lambda_1-i+1}=0$ we have two unnecessary inequalities: $\displaystyle\sum_{i'=1}^i X_{i'n} \leq 1$ and $\displaystyle\sum_{i'=1}^{i-1} X_{i'n}\geq 0$. The number of row sums equal to zero is given by the number of integers $\ell$ with $1\leq\ell\leq\lambda_1$ such that $a_{\ell}=0$. This count equals $\lambda_1-D(\lambda)$, where $D(\lambda)$ equals the number of distinct part sizes of $\lambda$. Thus, we have $2(\lambda_1-D(\lambda))$ unnecessary inequalities. See Figure~\ref{fig:facet_shape3}.

\smallskip
\item \label{bullet4} $-C(\lambda)$: We now have a few more border inequalities to discard, depending on $\lambda$. We take each case in turn. See Figure~\ref{fig:facet_shape4}.

\begin{enumerate}
\item \label{a}
When $\lambda_1\neq\lambda_2$, we may also discard the inequality $X_{1 n} \leq 1$, as this is a partial sum of the form $\displaystyle\sum_{i'=1}^i X_{i'n} \leq 1$ for $1\leq i\leq\lambda_1-\lambda_2$, which by reasoning in (\ref{bullet1}) may be discarded. The other inequalities of that form have already been counted in (3), thus we have one additional unnecessary inequality whenever $\lambda_1\neq\lambda_2$. Note, since $\lambda_2=0\neq\lambda_1$ for $k=1$, this inequality is also discarded in the case $k=1$.

\item \label{b}
When $k=1$, 
since $\displaystyle\sum_{j'=1}^n X_{1j'}=1$ and $\displaystyle\sum_{j'=1}^n X_{ij'}=0$ for all $2\leq i\leq\lambda_1$, we have that the sum of all the entries in the matrix is $1$. This, together with the inequalities $\displaystyle\sum_{i'=1}^{\lambda_1} X_{i'j}\geq 0$, $2\leq j\leq n$, implies $\displaystyle\sum_{i'=1}^{\lambda_1} X_{i'1}\leq 1$. 
So we have one additional unnecessary inequality when $k=1$. 

\item 
\label{c} When $1<k=n-1$, 
by the reasoning in the $k=n-1$ case of (\ref{bullet1}) we may discard the inequality $\displaystyle\sum_{i'=1}^{\lambda_1} X_{i'n}\geq 0$. If $k=1$, $n=2$, we may not discard this inequality, since in this case we have already discarded the inequality in (\ref{b}).

\item \label{d} Suppose $k=n-1$ and $\lambda$ is a rectangle, so $\lambda_{n-1}=\lambda_1$. In this case, we may also discard the inequality $X_{1 1} \geq 0$; this is  a partial sum of the form $\displaystyle\sum_{i'=1}^i X_{i'1} \geq 0$ for $\lambda_1-\lambda_{n-1}+1\leq i\leq\lambda_1$ which by the reasoning in (\ref{bullet1}) may be discarded. The other inequalities of that form have already been counted in (3), thus we have one additional unnecessary inequality whenever $\lambda_1=\lambda_1^{n-1}$ and $k>1$. If $k=1$, $n=2$, we may not discard this inequality, since we have already discarded the inequality in (\ref{a}).
\end{enumerate}
\end{enumerate}

\smallskip
Thus the total number of facets is at most (\ref{eq:facets}).
We claim this upper bound is the facet count. That is, a facet can be defined as all ${X} \in P(\lambda,n)$ which satisfy exactly one of the following: 
\begin{align}
\label{eq:z1}
r_{ij} &= \displaystyle\sum_{j'=1}^{j} X_{ij'} = 0, \hspace{.3in} & 2 \leq i \leq \lambda_1 \text{ and }
1 \leq j \leq n-a_{\lambda_1-i+1}-1 \\
\label{eq:z2}
c_{ij} &= \displaystyle\sum_{i'=1}^{i} X_{i'j} =0, \hspace{.3in} & 1 \leq i \leq \lambda_1 \text{ and } 2 \leq j \leq n-1 
\\
\label{eq:z2a}
c_{in } &= \displaystyle\sum_{i'=1}^{i} X_{i'n} =0, \hspace{.3in} & (i=\lambda_1 \mbox{ and } k<n-1) \mbox{ or } (1 \leq i \leq \lambda_1-1 \text{ and } a_{\lambda_1-i}>0)
\\
\label{eq:z3a}
c_{ij} &=\displaystyle\sum_{i'=1}^{i} X_{i'j}=1, \hspace{.3in} & \lambda_1-\lambda_2+1 \leq i \leq \lambda_1 \text{ and } 2 \leq j \leq n-1 \\
\label{eq:z3n}
c_{in} &=\displaystyle\sum_{i'=1}^{i} X_{i'n}=1, \hspace{.3in} & \lambda_1-\lambda_2+1 \leq i \leq \lambda_1  \text{ and } a_{\lambda_1-i+1}>0\\
\label{eq:z4}
r_{11} &= c_{11} = X_{11} =0 & \text{if } \lambda=\lambda_1^{n-1} \text{ and } k>1\\
\label{eq:z5}
c_{\lambda_1 1} &= \displaystyle\sum_{i'=1}^{\lambda_1} X_{i'1}=1 & \text{if } k=1.
\end{align}

Note each equality fixes exactly one matrix entry, lowering the dimension by one. By an argument similar to that given in Theorem~\ref{thm:facets_mn}, given any two equalities above, we may construct a sign matrix in $M(\lambda,n)$ that satisfies one but not the other.
\end{proof}

\begin{figure}[htbp]
\begin{center}
\includegraphics[scale=.6]{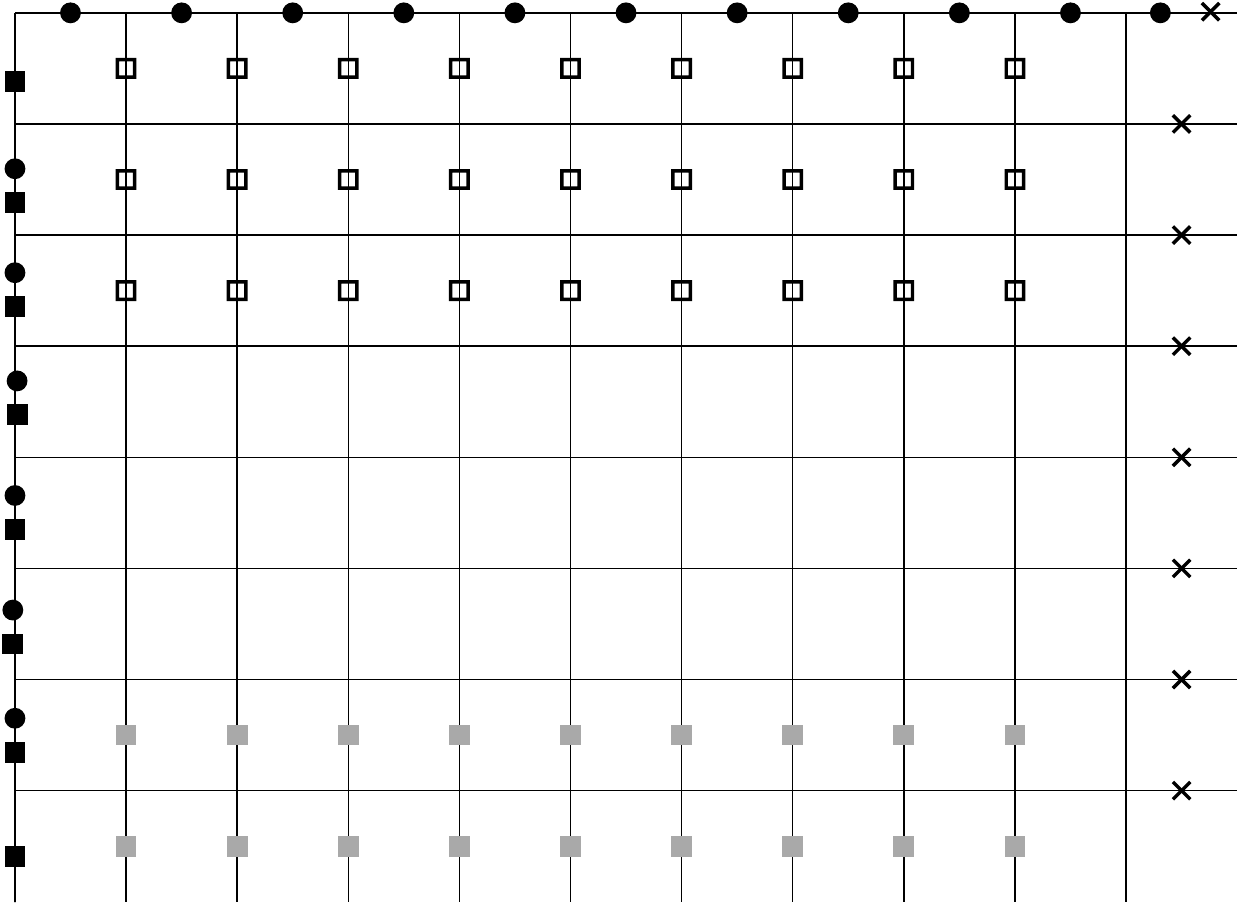}
\end{center}
\caption{$\Gamma_{(\lambda_1,n)}$ decorated with symbols that represent inequalities that do not determine facets of $P(\lambda,n)$. Squares represent partial column sums of the form $\sum X_{ij}\leq 1$ and dots represent partial row or column sums of the form $\sum X_{ij}\geq 0$.
The filled-in shapes represent inequalities that were already removed in the facet proof for $P(m,n)$. The crosses represent the fixed row sums in $P(\lambda, n)$. The open squares and gray squares represent inequalities that are removed in (\ref{bullet1}) from the proof of Theorem~\ref{thm:facets_lambda}.}
\label{fig:facet_shape1}
\end{figure}

\begin{figure}[htbp]
\begin{center}
\begin{multicols}{2}
\includegraphics[scale=.6]{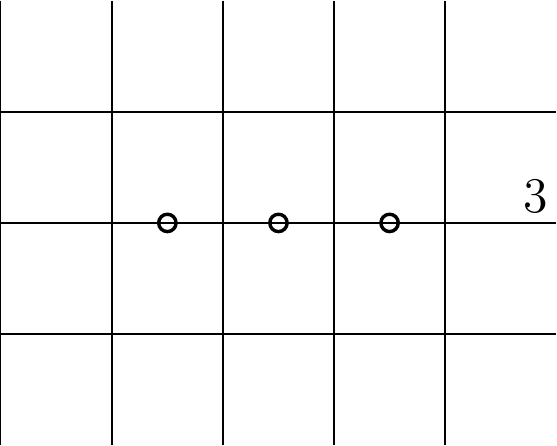}

\includegraphics[scale=.6]{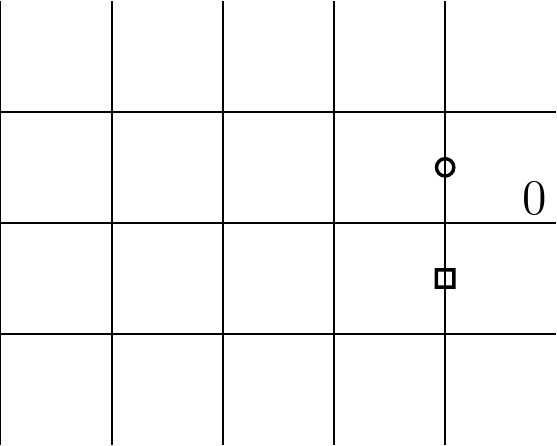}
\end{multicols}
\end{center}
\caption{Examples of portions of 
$\Gamma_{(\lambda_1,n)}$ 
that represent inequalities removed based on the fixed row sums. The left diagram shows removed inequalities discussed in (\ref{bullet2}) in the case 
$a_{\lambda_{1-i+1}}>0$. 
The right diagram shows removed inequalities discussed in (\ref{bullet3}) in the case $a_{\lambda_{1-i+1}}=0$.
}
\label{fig:facet_shape3}
\end{figure}

\begin{figure}[htbp]
\begin{center}
\includegraphics[scale=.6]{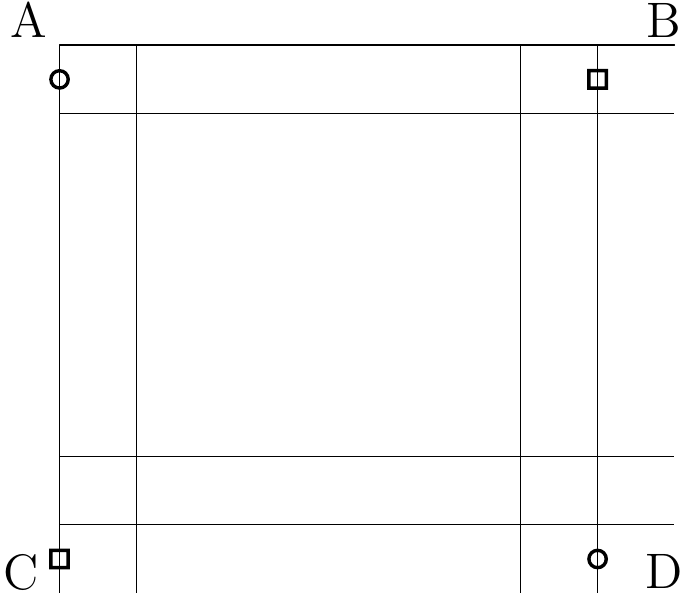}
\end{center}
\caption{$\Gamma_{(\lambda_1,n)}$ decorated with symbols that represent inequalities  removed in (\ref{bullet4}) from the proof of Theorem~\ref{thm:facets_lambda}. 
A is discussed in (\ref{d}),  B is discussed in (\ref{a}),  C is discussed in (\ref{b}), and  D is discussed in (\ref{c}). }
\label{fig:facet_shape4}
\end{figure}

\begin{corollary}
The number of facets of $P([\lambda_1,\lambda_2],n)$ when $\lambda_1 \ne \lambda_2$ is as follows:
\begin{itemize}
\item 
$3n\lambda_1-n-5(\lambda_1-1)-(n-2)(\lambda_1-\lambda_2)$, when $n>3$;
\item $3n\lambda_1-n-5(\lambda_1-1)-(n-2)\lambda_2-1$, when $n=3$.
\end{itemize}
\end{corollary}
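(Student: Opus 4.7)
The plan is to derive both formulas by direct substitution into Theorem~\ref{thm:facets_lambda}, with the two cases of the corollary corresponding to a genuine dichotomy in the theorem's hypotheses---specifically, whether $k<n-1$ or $k=n-1$. Throughout, I would use the following $n$-independent parameters of $\lambda=[\lambda_1,\lambda_2]$ with $\lambda_1\neq\lambda_2$: $k=2$, $a_{\lambda_1}=1$ (so $k-a_{\lambda_1}=1$), and $D(\lambda)=2$ (so $\lambda_1-D(\lambda)=\lambda_1-2$). These values feed into both cases.

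For $n>3$, we have $k=2<n-1$, so by the convention in Theorem~\ref{thm:facets_lambda} that $\lambda_i=0$ whenever $i>k$, we get $\lambda_{n-1}=0$. The condition $1<k<n-1$ together with $\lambda_1\neq\lambda_2$ places us in the second branch of the piecewise definition of $C(\lambda)$, giving $C(\lambda)=1$. Substituting into the formula (\ref{eq:facets}) produces
\[3n\lambda_1-n-3(\lambda_1-1)-(n-2)(\lambda_1-\lambda_2)-1-2(\lambda_1-2)-1,\]
and collecting the constant and $\lambda_1$ terms yields the claimed expression $3n\lambda_1-n-5(\lambda_1-1)-(n-2)(\lambda_1-\lambda_2)$.

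For $n=3$, instead $k=n-1$, so $\lambda_{n-1}=\lambda_2$ is no longer zero, contributing an extra $-(n-2)\lambda_2$ to the count relative to what the $n>3$ analysis would give. The fourth branch of the piecewise definition of $C(\lambda)$ now applies (since $k=n-1$ and $\lambda_1\neq\lambda_2$), giving $C(\lambda)=2$, which contributes an additional $-1$ relative to the previous case. Substituting these values into (\ref{eq:facets}) and simplifying yields the stated formula.

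The whole argument is mechanical algebraic bookkeeping, so there is no substantive obstacle; the only points requiring care are selecting the correct branch of the piecewise $C(\lambda)$ in each subcase and correctly applying the $\lambda_i=0$ convention when $k<i$.
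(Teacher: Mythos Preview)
Your proposal is correct and takes essentially the same approach as the paper's proof: both arguments simply substitute the parameter values $k=2$, $a_{\lambda_1}=1$, $D(\lambda)=2$, together with the case-dependent values of $\lambda_{n-1}$ and $C(\lambda)$, directly into the formula of Theorem~\ref{thm:facets_lambda}, with the two bullets of the corollary corresponding exactly to the cases $k<n-1$ and $k=n-1$.
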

\begin{proof}
Suppose $\lambda_1\neq\lambda_2$ and $n > 3$. Then $a_{\lambda_1}=1$, $D(\lambda)=2$, and $C(\lambda)=1$ (from the definition in Theorem~\ref{thm:facets_lambda}). Thus since $k=2$, the formula of Theorem~\ref{thm:facets_lambda} specializes to
$3n\lambda_1-n-3(\lambda_1-1)-(n-2)(\lambda_1-\lambda_2)-(2-1)-2(\lambda_1-2)-1$, which reduces to the above formula.
Now suppose $\lambda_1\neq\lambda_2$ and $n = 3$. In this case $\lambda_{n-1}=\lambda_2$ and $C(\lambda)=2$ but the rest of the values remain the same. Thus, the formula of Theorem~\ref{thm:facets_lambda} specializes to the above.
\end{proof}

In the above corollary, we required $\lambda_1 \ne \lambda_2$. The case $\lambda=[\lambda_1, \lambda_1]=[\lambda_1^2]$ is a special case of the next corollary, which enumerates the facets when $\lambda$ is a rectangle. 

\begin{corollary}
\label{cor:squares}
The number of facets of $P(\lambda_1^k,n)$ is as follows: 
\begin{itemize}
\item 0, when $k=n$;
\item $2n\lambda_1-n-3(\lambda_1-1)$, when $k=n-1$ or $k=1$;
\item $3n\lambda_1-n-5(\lambda_1-1)$, when $1<k<n-1$.
\end{itemize}
\end{corollary}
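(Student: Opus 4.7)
The plan is to simply specialize Theorem~\ref{thm:facets_lambda} to the rectangular shape $\lambda=\lambda_1^k$, handling the degenerate case $k=n$ separately. First I would observe that when $k=n$, Proposition~\ref{prop:dim} gives dimension $(\lambda_1-\lambda_n)(n-1)=0$, so the polytope is a single point and has $0$ facets (this is also consistent with the fact that there is a unique SSYT of shape $\lambda_1^n$ with entries at most $n$, forcing all columns to be $1,2,\ldots,n$). For the remaining cases I would apply formula (\ref{eq:facets}) directly.

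For a rectangle $\lambda=\lambda_1^k$ with $k<n$, the relevant quantities are immediate from the definitions: $a_{\lambda_1}=k$ and $a_i=0$ otherwise, so $k-a_{\lambda_1}=0$; every part equals $\lambda_1$, so $D(\lambda)=1$ and $\lambda_1-D(\lambda)=\lambda_1-1$; and $\lambda_1-\lambda_2=0$ except in the boundary case $k=1$, where the convention $\lambda_2=0$ forces $\lambda_1-\lambda_2=\lambda_1$. The value of $\lambda_{n-1}$ is $\lambda_1$ when $k=n-1$ and $0$ otherwise. The value of $C(\lambda)$ is read off directly from its case definition in Theorem~\ref{thm:facets_lambda}: $C(\lambda)=0$ when $1<k<n-1$ (since $\lambda_1=\lambda_2$), and $C(\lambda)=2$ when $k=1$ or when $k=n-1$ and $\lambda=\lambda_1^k$.

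Substituting these values into (\ref{eq:facets}) reduces each case to a short algebraic simplification. For $1<k<n-1$ the correction terms $(n-2)(\lambda_1-\lambda_2+\lambda_{n-1})$, $(k-a_{\lambda_1})$, and $C(\lambda)$ all vanish, leaving $3n\lambda_1-n-3(\lambda_1-1)-2(\lambda_1-1)=3n\lambda_1-n-5(\lambda_1-1)$. For $k=n-1$ one picks up the extra $(n-2)\lambda_1$ from $\lambda_{n-1}=\lambda_1$ plus $C(\lambda)=2$; collecting terms yields $2n\lambda_1-n-3(\lambda_1-1)$. For $k=1$ the term $(n-2)(\lambda_1-\lambda_2)=(n-2)\lambda_1$ together with $C(\lambda)=2$ produces the same result $2n\lambda_1-n-3(\lambda_1-1)$.

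Since every ingredient is already established in Theorem~\ref{thm:facets_lambda}, there is no real obstacle; the only item requiring any care is the boundary bookkeeping when $k=1$, specifically making sure one uses the convention $\lambda_2=0$ so that the $\lambda_1-\lambda_2$ term activates and that $C(\lambda)=2$ comes from the first branch of the case definition rather than the second. I would include brief remarks on these two points to make the specialization unambiguous.
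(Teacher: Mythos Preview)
Your proposal is correct and essentially identical to the paper's own proof: both handle $k=n$ via Proposition~\ref{prop:dim} (dimension zero, hence no facets) and then specialize formula~(\ref{eq:facets}) from Theorem~\ref{thm:facets_lambda} case by case, reading off $a_{\lambda_1}=k$, $D(\lambda)=1$, the appropriate values of $\lambda_2$, $\lambda_{n-1}$, and $C(\lambda)$, and simplifying. Your explicit remark about the convention $\lambda_2=0$ when $k=1$ and the resulting activation of $C(\lambda)=2$ matches the paper's bookkeeping exactly.
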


\begin{proof}
Suppose $k=n$.
By Proposition~\ref{prop:dim}, since $k=n$ we have that the dimension of $P(\lambda_1^k)$ equals $(\lambda_1-\lambda_n)(n-1)=(\lambda_1-\lambda_1)(n-1)=0$. Since the polytope is zero dimensional, there are no facets.

Suppose $k=n-1$. We then have the following: $\lambda_1=\lambda_2= \lambda_{n-1}$, $a_{\lambda_1}=k=n-1$, $D(\lambda)=1$, and $C(\lambda)=2$. Therefore by Theorem~\ref{thm:facets_lambda} the number of facets is $3n\lambda_1-n-3(\lambda_1-1)-(n-2)(0+\lambda_1)-0-2(\lambda_1-1)-2$ which reduces to the formula above.

For $1<k<n-1$, by Theorem~\ref{thm:facets_lambda} the number of facets is
$3n\lambda_1-n-3(\lambda_1-1)-(n-2)(\lambda_1-\lambda_2+\lambda_{n-1})
-(k-a_{\lambda_1})-2(\lambda_1-D(\lambda)) - C(\lambda)$. Since $\lambda_1=\lambda_2$ and $\lambda_{n-1}=0$, the 4th term equals $0$. The 5th term equals $0$ since $a_{\lambda_1}=k$. Note $D(\lambda_1^k)=1$, so the 6th term equals $2(\lambda_1-1)$. $C(\lambda)=0$, so the resulting count follows.

When $k=1$,  by Theorem~\ref{thm:facets_lambda} the number of facets is
$3n\lambda_1-n-3(\lambda_1-1)-(n-2)(\lambda_1-\lambda_2+\lambda_{n-1})
-(k-a_{\lambda_1})-2(\lambda_1-D(\lambda)) - C(\lambda) = 3n\lambda_1-n-3(\lambda_1-1)-(n-2)(\lambda_1-0+0)-(1-1)-2(\lambda_1-1) - 2$, since $a_{\lambda_1}=D(\lambda)=1$ and $\lambda_2=0$. The resulting count follows.
\end{proof}

Finally, we have the following corollary in the case that $\lambda$ is hook-shaped.
\begin{corollary} 
The number of facets of $P([\lambda_1,1^{k-1}],n)$ is as follows:
\begin{itemize}
\item 
$2n(\lambda_1-1)-n-3(\lambda_1-2)$,
when $k=n$;
\item $2n\lambda_1-2n-3(\lambda_1-1)+4$, 
when $k=n-1$;
\item $2n\lambda_1-3(\lambda_1-1)-k+2$, when $1<k <n-1$.
\end{itemize}
\end{corollary}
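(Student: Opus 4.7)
The plan is to apply Theorem~\ref{thm:facets_lambda} to $\lambda = [\lambda_1, 1^{k-1}]$ after identifying the relevant parameters, and to handle the case $k = n$ separately by reducing to a polytope associated with a shorter partition.

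First, for the hook $\lambda = [\lambda_1, 1^{k-1}]$ with $\lambda_1 \geq 2$ and $k \geq 2$, I would record that the part sizes are $\lambda_1$ and $1$, so $\lambda_2 = 1$, $a_{\lambda_1} = 1$, and $D(\lambda) = 2$. Moreover $\lambda_{n-1} = 1$ precisely when $k = n-1$ and $\lambda_{n-1} = 0$ otherwise. From the case-definition of $C(\lambda)$ in Theorem~\ref{thm:facets_lambda}, we get $C(\lambda) = 1$ when $1 < k < n-1$ (since $\lambda_1 \neq \lambda_2$) and $C(\lambda) = 2$ when $k = n-1$ (since $\lambda_1 \neq \lambda_2$ and $\lambda$ is not a rectangle). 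Substituting these values into (\ref{eq:facets}) and collecting terms should yield the stated formulas in both the $1 < k < n-1$ and $k = n-1$ cases; this step is essentially algebraic simplification with no combinatorial content beyond Theorem~\ref{thm:facets_lambda}.

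The case $k = n$ is not covered by Theorem~\ref{thm:facets_lambda}, which was stated under the standing assumption $k<n$. Here I would observe that any tableau in $SSYT([\lambda_1, 1^{n-1}], n)$ has first column forced to be $[1, 2, \ldots, n]^T$, so that by the bijection of Theorem~\ref{thm:MtoSSYT} the bottom row of the corresponding sign matrix is determined by the other rows via $M_{\lambda_1, j} = 1 - c_{\lambda_1 - 1, j}$. Projecting onto the first $\lambda_1 - 1$ rows therefore gives an affine bijection from $P([\lambda_1, 1^{n-1}], n)$ onto $P([\lambda_1 - 1], n)$, since the row-sum condition (\ref{eq:Mij_rowsum}) on the truncated matrix is precisely that defining $M([\lambda_1 - 1], n)$. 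Applying the $k = 1$ case of Corollary~\ref{cor:squares} to the single-row shape $[\lambda_1 - 1]$ then produces $2n(\lambda_1 - 1) - n - 3(\lambda_1 - 2)$, matching the claim.

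The main obstacle is verifying the affine isomorphism in the $k = n$ case: one must confirm both that deleting the forced bottom row of a sign matrix $M \in M([\lambda_1, 1^{n-1}], n)$ lands in $M([\lambda_1 - 1], n)$ and that every element of $M([\lambda_1 - 1], n)$ lifts uniquely back to a valid sign matrix of the original shape, and that facets correspond under this map. The remaining work, for both of the $k < n$ cases and the arithmetic in the $k = n$ reduction, amounts to routine simplification of the closed form in Theorem~\ref{thm:facets_lambda}.
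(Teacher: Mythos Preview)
Your proposal is correct and follows essentially the same approach as the paper: substitute the hook parameters into Theorem~\ref{thm:facets_lambda} for the two cases $1<k<n-1$ and $k=n-1$, and for $k=n$ reduce to the one-row shape $[\lambda_1-1]$ and invoke Corollary~\ref{cor:squares}. Your treatment of the $k=n$ case is in fact more careful than the paper's, which simply asserts that the forced first column ``reduces'' the problem to $P([\lambda_1-1],n)$; your explicit description of the affine isomorphism via deletion of the determined bottom row supplies the justification that the paper leaves implicit.
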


\begin{proof}
When $k=n$, the first column of the tableau corresponding to any sign matrix in the polytope is fixed as $1, 2, \ldots, n$, so this reduces to the case of rectangles of one row, that is, shape $[\lambda_1-1]$. So by Corollary~\ref{cor:squares}, we have $2n(\lambda_1-1)-n-3((\lambda_1-1)-1)=2n\lambda_1-3n-3\lambda_1+6$ facets.

When $k=n-1$, in the formula in Theorem~\ref{thm:facets_lambda} we have that $\lambda_2=\lambda_{n-1}=1$, $a_{\lambda_1}=1$, $D(\lambda)=2$ and $C(\lambda)=2$. Therefore, by Theorem~\ref{thm:facets_lambda} the number of facets is $3n\lambda_1-n-3(\lambda_1-1)-(n-2)(\lambda_1-1+1)-(n-1-1)-2(\lambda_1-2)-2$, which when simplified yields the desired result.

When $1<k<n-1$, $a_{\lambda_1}=1$, $D(\lambda)=2$, and $\lambda_1\neq\lambda_2$ so $C(\lambda)=1$.
So by Theorem~\ref{thm:facets_lambda} the number of facets is
$3n\lambda_1-n-3(\lambda_1-1)-(n-2)(\lambda_1-1)-(k-1)-2(\lambda_{1}-2)
-1$, which when simplified yields the desired result. 
\end{proof}

\section{Face lattice descriptions}
\label{sec:facelattice}

In this section, we determine the face lattice of the $P(m,n)$ and $P(\lambda,n)$ polytope families. We also show that given any two faces, 
we may determine the smallest dimensional face in which they are contained. The ideas for proving the face lattice were inspired by \cite{striker} and \cite{heean}.

\begin{definition}[\cite{ziegler}]
The \emph{face lattice} of a convex polytope $P$ is the poset $L:= L(P)$ of all faces of $P$,  partially ordered by inclusion. 
\end{definition}

\begin{definition}
We define the \emph{complete partial sum graph} denoted $\overline\Gamma_{(m,n)}$ as the following labeling of the graph $\Gamma_{(m,n)}$. The horizontal edges are labeled with $\{0,\star\}$, 
while the vertical edges are labeled $\{0,1,\{0,1\}\}$. An example is shown for $P(3,5)$ in Figure~\ref{fig:completegraph}.
\label{def:completegraph}
\end{definition}

\begin{figure}[htbp]
\begin{center}
\includegraphics[scale=.7]{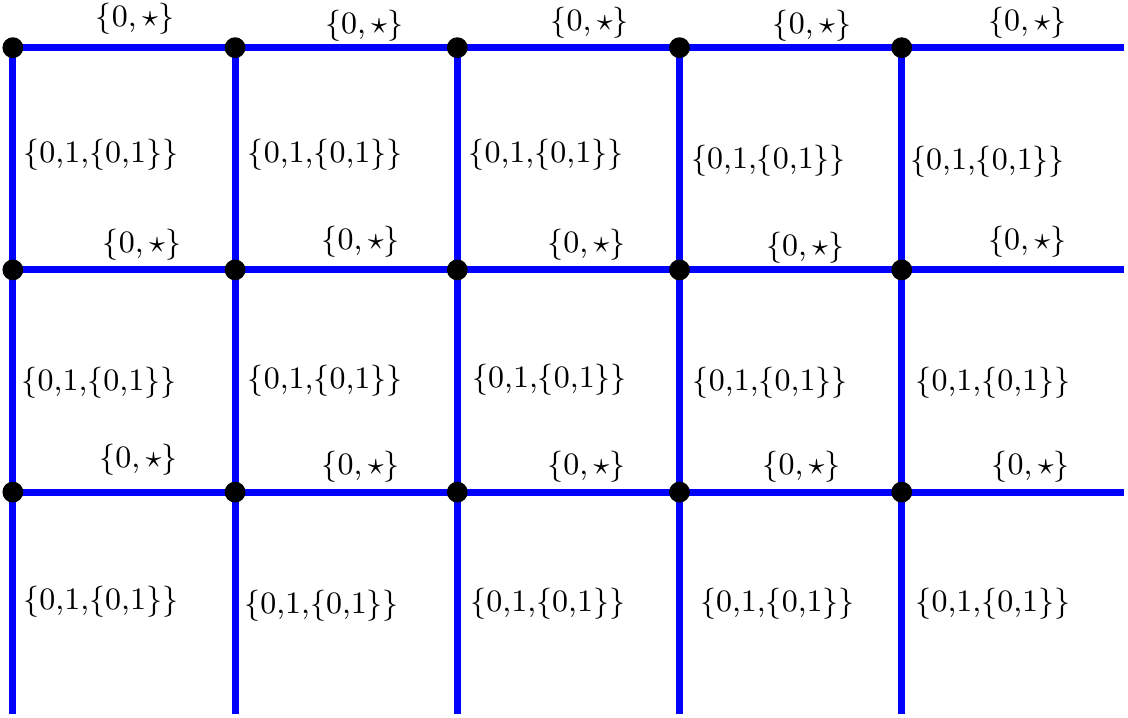}
\end{center}
\caption{The complete partial sum graph  $\overline\Gamma_{(3,5)}$.}
\label{fig:completegraph}
\end{figure}

\begin{definition}
\label{def:0-dim}
A \emph{0-dimensional} component of $\overline\Gamma_{(m,n)}$ is a labeling of $\Gamma_{(m,n)}$ such that the edge labels are one element subsets of the edge labels of $\overline\Gamma_{(m,n)}$ and such that the edge labels come from the partial sums of a sign matrix as follows: Let the edges be labeled as in $\thickhat{M}$ for some $m\times n$ sign matrix $M$, with the exception that horizontal edges labeled by nonzero numbers in $\thickhat{M}$ are now labeled as $\star$. 
For any $m\times n$ sign matrix $M$, let $g(M)$ be the $0$-dimensional component associated to $M$.
\end{definition}

\begin{lemma}
\emph{0-dimensional} components of $\overline\Gamma_{(m,n)}$ are in bijection with $m\times n$ sign matrices.
\end{lemma}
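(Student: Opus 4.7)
The plan is to establish the bijection via the map $g$ from Definition~\ref{def:0-dim}, checking that it is well-defined, surjective, and injective.

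First I would verify that $g$ is well-defined. Given any $m\times n$ sign matrix $M$, the graph $\thickhat{M}$ labels each vertical edge by a column partial sum $c_{ij}=\sum_{i'=1}^{i} M_{i'j}$, which by (\ref{eq:sm1}) lies in $\{0,1\}$, and labels each horizontal edge by a row partial sum $r_{ij}=\sum_{j'=1}^{j} M_{ij'}$, which by (\ref{eq:sm2}) is a nonnegative integer. Replacing each nonzero $r_{ij}$ by $\star$ yields a labeling in which every edge carries a single label drawn from the set of labels available on the corresponding edge of $\overline\Gamma_{(m,n)}$. Hence $g(M)$ is a 0-dimensional component.

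Surjectivity is immediate from Definition~\ref{def:0-dim}: a labeling is declared to be a 0-dimensional component precisely when it arises from some sign matrix $M$ in the manner above, so every 0-dimensional component is in the image of $g$.

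The main (and essentially only substantive) step is injectivity, and this is where I would appeal to Remark~\ref{remark:invertible}. Suppose $g(M)=g(M')$ for two $m\times n$ sign matrices. Since each vertical edge of a 0-dimensional component is labeled by a singleton from $\{0,1\}$, the labelings agreeing means the full column partial sums $c_{ij}$ of $M$ and $M'$ coincide for all $1\le i\le m$, $1\le j\le n$. By Remark~\ref{remark:invertible}, the column partial sums determine the matrix uniquely, so $M=M'$. This proves $g$ is injective and completes the bijection. No step here is a real obstacle; the content of the lemma is essentially the observation that the vertical-edge data of a 0-dimensional component retains all column partial-sum information, and hence (by Remark~\ref{remark:invertible}) the entire sign matrix.
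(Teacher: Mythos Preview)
Your proof is correct and follows essentially the same approach as the paper: the key point in both is that the vertical-edge labels of $g(M)$ record all column partial sums $c_{ij}$, so by Remark~\ref{remark:invertible} the matrix $M$ can be recovered, giving injectivity. You simply make the well-definedness and surjectivity steps explicit, whereas the paper treats these as immediate from Definition~\ref{def:0-dim}.
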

\begin{proof}
Recall we may recover a sign matrix $M$ from its column partial sums. Thus, even though we are not keeping the exact values of the row partial sums, we still have enough information to recover a sign matrix $M$ from $g(M)$. Thus,
given sign matrices $M_1\neq M_2$, $g(M_1)\neq g(M_2)$.
\end{proof}

\begin{definition}
\label{def:union}
Let $\delta$ and $\delta'$ be labelings of  $\Gamma_{(m,n)}$ such that the edge labels are subsets of the corresponding edge label sets in $\overline\Gamma_{(m,n)}$.
Define the \emph{union} $\delta\cup \delta'$  as the labeling of $\Gamma_{(m,n)}$ such that each edge is labeled by the union of the corresponding labels on $\delta$ and $\delta'$, where we consider $0\cup\star=\star$. Define the \emph{intersection} $\delta\cap\delta'$  to be a labeling of $\Gamma_{(m,n)}$ such that each edge is labeled by the intersection of the corresponding labels on $\delta$ and $\delta'$, where we consider $0\cap\star=0$. So the vertical edges will have labels of $\emptyset, 0, 1$, or $\{0,1\}$ and the horizontal edges will have labels of $0$ or $\star$. In our figures, vertical edges labeled $\{0,1\}$ and horizontal edges labeled $\star$ will be darkened (blue).
\end{definition}

\begin{definition}
Let $\delta$ be a labeling of $\Gamma_{(m,n)}$ such that the edge labels are subsets of the corresponding edge label sets in $\overline\Gamma_{(m,n)}$.
\begin{enumerate}
\item $\delta$ is a \emph{component} of $\overline\Gamma_{(m,n)}$ if it is either the empty labeling of $\Gamma_{(m,n)}$ (we call this the \emph{empty component} denoted $\emptyset$) or if it can be presented as the union of any set of $0$-dimensional components.
\item For two components $\delta$ and $\delta'$ of $\overline\Gamma_{(m,n)}$, we say $\delta$ is a \emph{component of} $\delta'$ if the edge labels  of $\delta$ are each a subset of the corresponding edge labels  
of $\delta'$, where we consider $0$ to be a subset of $\star$.
\end{enumerate}
\label{def:face}
\end{definition}

\begin{remark}
\label{remark:union}
Note if $\delta$ and $\delta'$ are components of $\overline{\Gamma}_{(m,n)}$, $\delta\cup\delta'$ is also a component. This is because each of $\delta$ and $\delta'$ is a union of 0-dimensional components, so $\delta\cup\delta'$ is as well.
\end{remark}

\begin{figure}
\begin{center}

\includegraphics[scale=.7]{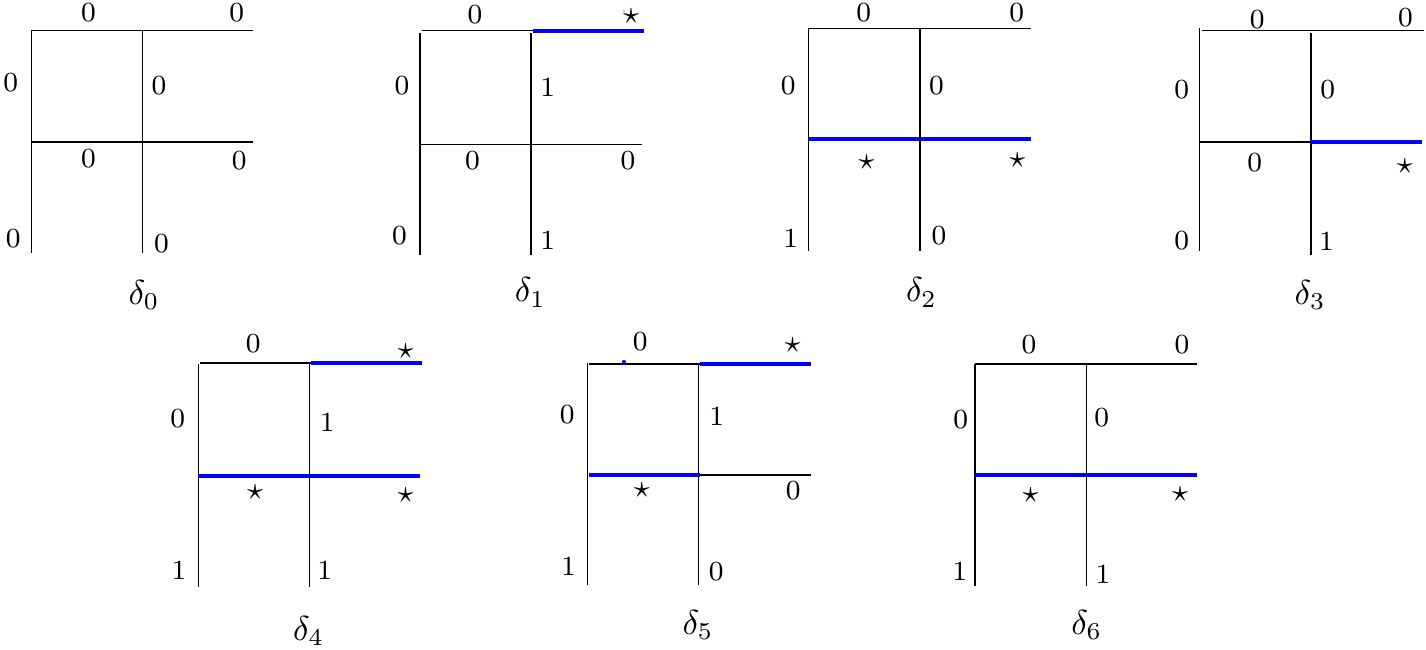}

Seven of the $\overline\Gamma_{(2,2)}$ $0$-dimensional components

\includegraphics[scale=.75]{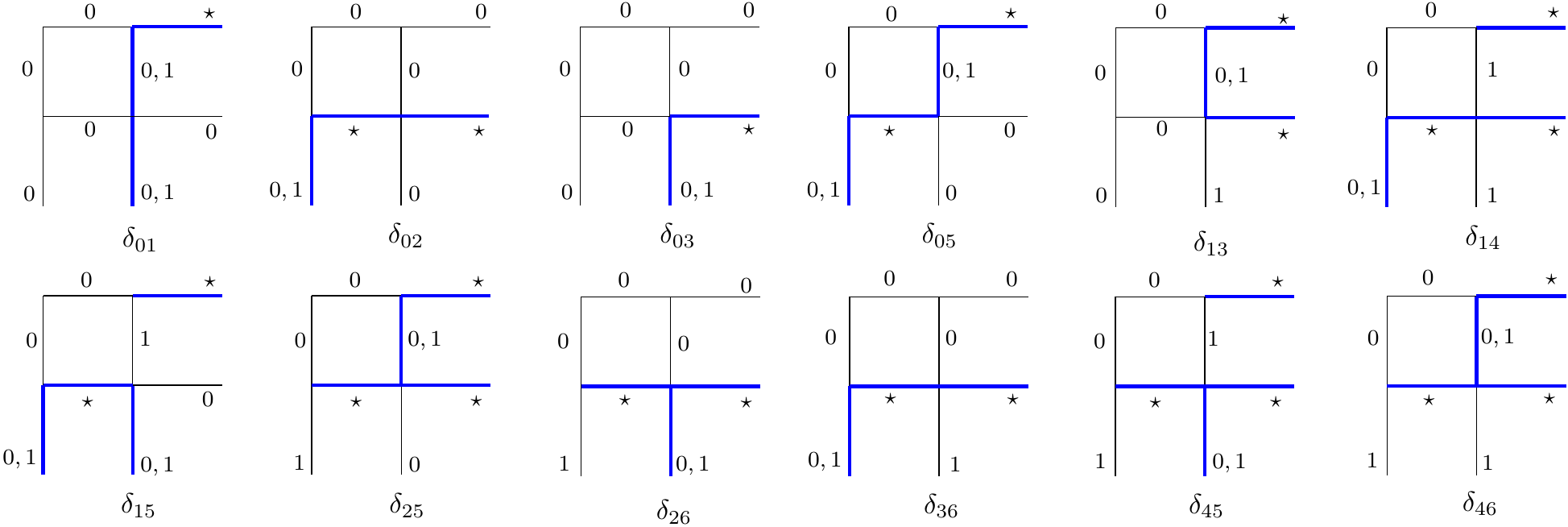}

Twelve of the $\overline\Gamma_{(2,2)}$ $1$-dimensional components

\includegraphics[scale=.75]{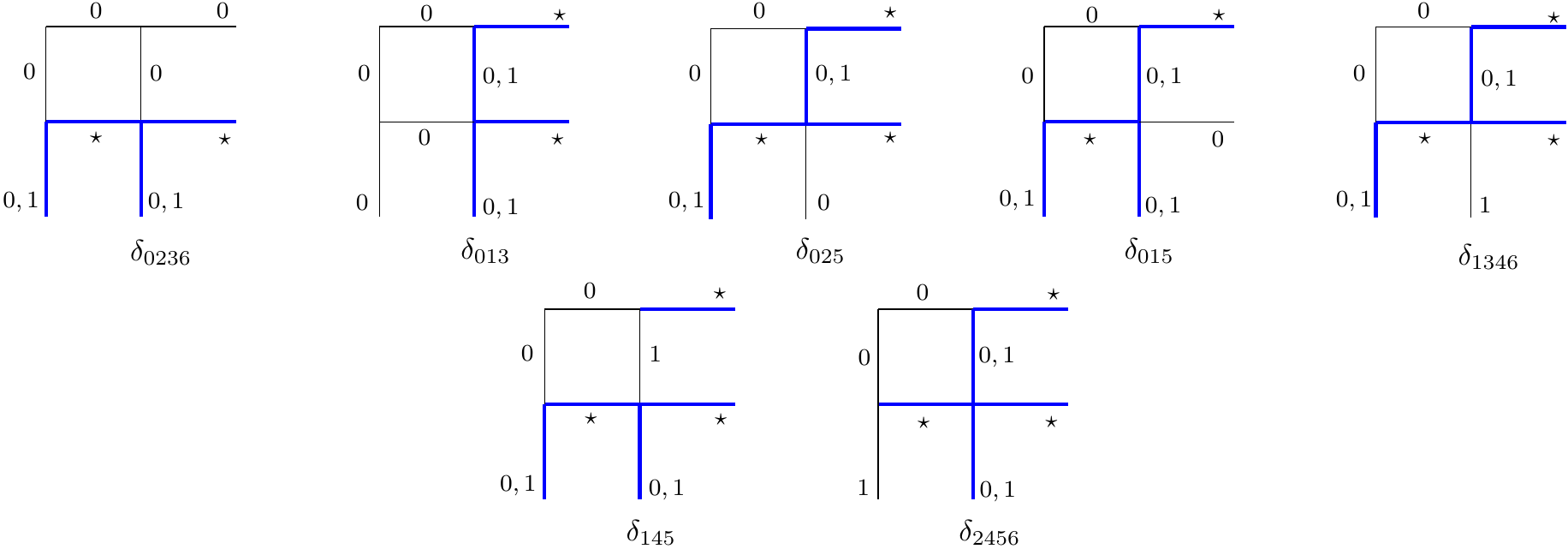}

Seven of the $\overline\Gamma_{(2,2)}$ $2$-dimensional components

\includegraphics[scale=.8]{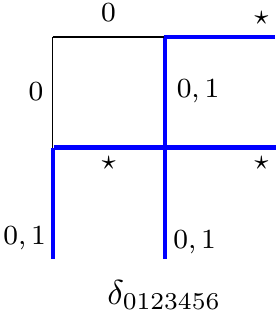} \hspace{1.3in} \includegraphics[scale=.8]{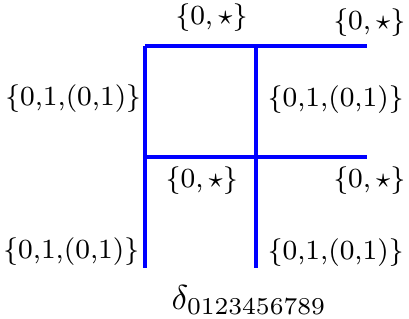}

One of the $3$-dimensional components \hspace{.7in} The complete partial sum graph $\overline\Gamma_{(2,2)}$

\end{center}
\caption{A set of components of $\overline\Gamma_{(2,2)}$.}
\label{fig:faces}
\end{figure}

Next, we define a partial order on components of $\overline\Gamma_{(m,n)}$.

\begin{definition}
Define a {partial order} $\Lambda_{(m,n)}$ on components of $\overline\Gamma_{(m,n)}$  by containment. That is, $\delta\leq \delta'$ in $\Lambda_{(m,n)}$ if and only if $\delta$ is a component of $\delta'$. Say $\delta'$ \emph{covers} $\delta$, denoted $\delta\lessdot\delta'$, if $\delta$ is contained in $\delta'$ and there is no component $\delta''$ of $\overline\Gamma_{(m,n)}$ such that $\delta<\delta''<\delta'$.
\end{definition}

\begin{remark}
\label{remark:meet_join}
For components $\delta$ and $\delta'$ of $\overline\Gamma_{(m,n)}$, we may 
define  $\delta \vee \delta' = \delta\cup\delta'$. By Remark~\ref{remark:union}, this is itself a component of $\overline\Gamma_{(m,n)}$. Also, it is the smallest component containing both $\delta$ and $\delta'$ as subcomponents, so this is the \emph{join operator} of $\Lambda_{(m,n)}$. 
We will show in Theorems~\ref{th:g_bijection} and \ref{thm:poset_iso} that 
$\Lambda_{(m,n)}$  is the face lattice of $P(m,n)$, thus there also exists a well-defined meet operator, since $\Lambda_{(m,n)}$ is a lattice. 
The {meet} $\delta \wedge \delta'$  will be the maximal component contained in the intersection $\delta \cap \delta'$; note this could be the empty component.
\end{remark}

\begin{remark}
\label{remark:max}
Note the maximal component of $\Lambda_{(m,n)}$ is the union of all $0$-dimensional components. Thus, it has labels $\{0,1\}$ on the vertical edges of $\Gamma_{(m,n)}$ and $\star$ on the horizontal edges.
\end{remark}

\begin{example}
We show examples of several of the above definitions using Figure~\ref{fig:faces} (which by the upcoming Theorems~\ref{th:g_bijection} and \ref{thm:poset_iso} is the face lattice of one of the $3$-dimensional faces of $P(2,2)$). 

\begin{enumerate}
\item[i).] We first exhibit a component as a union of $0$-dimensional components: $\delta_{025}=\delta_0 \cup \delta_2 \cup \delta_5$. 
\item[ii).] We now show how the union of two components can contain more $0$-dimensional components than are contained in the original component: $\delta_{14} \cup \delta_{46} = \delta_{0123456}$. Note $\delta_{0123456}$ is the join.
\item[iii).] 
Next we intersect two components: $\delta_{2456} \cap \delta_{015}= \delta_5$. Note $\delta_5$ is the meet.
\item[iv).] To illustrate containment of components, note the $1$-dimensional components $\delta_{01}, \delta_{03}$, and $\delta_{13}$ are all contained in the $2$-dimensional component $\delta_{013}$.
\end{enumerate}
\end{example}

\begin{definition}
\label{def:region}
Given a component  $\delta\in\Lambda_{(m,n)}$, consider the planar graph $G$ composed of the darkened edges of $\delta$; we regard any darkened edges on the right and bottom as meeting at a point in the exterior region.
We say a \emph{region} of $\delta$ is defined as a planar region of $G$, excluding the exterior region. 
Let $\mathcal{R}(\delta)$ denote the number of regions of $\delta$.  
For consistency we set $\mathcal{R}(\emptyset)=-1$.
\end{definition}


See Figure~\ref{fig:region} for an example of this definition.

\smallskip
We now state a lemma which shows that moving up in the partial order $\Lambda_{(m,n)}$ increases the number of regions. We will use this lemma in the proof of Theorem~\ref{thm:poset_iso}.

\begin{lemma}
\label{prop:regions}
Suppose a component $\delta\in\Lambda_{(m,n)}$ has $\mathcal{R}(\delta)=\omega$. If $\delta\lessdot\delta'$ then $\mathcal{R}(\delta')\geq \omega+1$. 
\end{lemma}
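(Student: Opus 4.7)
My plan is to analyze covers in $\Lambda_{(m,n)}$ via Euler's formula for planar graphs, exploiting the minimality imposed by the cover condition.

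\textbf{First,} I would handle the trivial case $\delta=\emptyset$: any cover of the empty component is a $0$-dimensional component $g(M)$, and $\mathcal{R}(g(M))=0=\mathcal{R}(\emptyset)+1$ since the only darkened edges of $g(M)$ are horizontal (from the $\star$-labeled row partial sums) and so bound no planar region. For $\delta\neq\emptyset$, I would show any cover has the form $\delta'=\delta\cup g(M)$ with $g(M)\not\leq\delta$. Indeed, by Remark~\ref{remark:union}, any $0$-dim subcomponent $g(M^*)\leq\delta'$ with $g(M^*)\not\leq\delta$ yields a component $\delta\cup g(M^*)$ strictly above $\delta$ and weakly below $\delta'$, so the cover hypothesis forces $\delta\cup g(M^*)=\delta'$. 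The darkened edges of $\delta'$ that are new relative to $\delta$ are then: horizontal edges at $(i,j)$--$(i,j+1)$ where $r_{ij}(M)>0$ while no $0$-dim subcomponent of $\delta$ has a positive row partial sum at that position, and vertical edges at $(i,j)$--$(i+1,j)$ where $\delta$'s singleton label there differs from $c_{ij}(M)$, causing expansion to $\{0,1\}$.

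\textbf{Next,} after identifying the right and bottom boundary vertices of $\Gamma_{(m,n)}$ to a single exterior point, I would apply Euler's formula to the planar graph $G_\delta$ of darkened edges to obtain $\mathcal{R}(\delta)=E-V+C$ (where $V$, $E$, $C$ count its vertices, edges, and connected components after identification), so that $\mathcal{R}(\delta')-\mathcal{R}(\delta)=\Delta E-\Delta V+\Delta C$. The goal is to show $\Delta E-\Delta V+\Delta C\geq 1$, i.e., that the newly darkened edges close off at least one new planar region. I would argue by contradiction: if $\mathcal{R}(\delta')=\mathcal{R}(\delta)$ then the new edges form a forest-like extension (extending pendants, merging components, or forming isolated pieces). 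Using this forest structure, I would produce a distinct $0$-dim subcomponent $g(M^*)\leq\delta'$ with $g(M^*)\not\leq\delta$ whose contribution to $\delta$ uses only a proper subset of the new darkened edges, so that $\delta<\delta\cup g(M^*)<\delta'$ is a strict chain, contradicting the cover hypothesis.

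\textbf{The hardest step} is the explicit construction of this $M^*$, which requires leveraging the sign-matrix constraints~(\ref{eq:sm1})--(\ref{eq:sm2}) together with the topology of the darkened-edge graph. I expect this will proceed by a local modification of $M$ analogous to the circuit argument in the proof of Theorem~\ref{thm:ineqthm}: along a leaf-edge of the forest of added structure, one toggles a minimal set of matrix entries to produce a valid sign matrix $M^*$ whose $0$-dim component realizes a strictly smaller set of new darkened edges in $\delta'$.
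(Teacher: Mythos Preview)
Your approach differs substantially from the paper's. The paper gives a short direct argument: since $\delta'$ strictly contains $\delta$, there is a $0$-dimensional component $g(M)\leq\delta'$ with $g(M)\not\leq\delta$; using equation~(\ref{eq:rc}), at any position where $g(M)$'s partial sum disagrees with the (singleton) label of $\delta$, an adjacent partial sum must also disagree, so these disagreements propagate to form an open or closed circuit of darkened edges in $\delta'$ not present in $\delta$, and such a circuit bounds at least one new region. The cover relation is never used beyond $\delta<\delta'$.

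You instead set up Euler's formula and argue by contradiction, leaning on the cover hypothesis to rule out an intermediate component. Your reduction to $\delta'=\delta\cup g(M)$ is correct and the Euler bookkeeping $\mathcal{R}=E-V+C$ is sound. But the argument stands or falls on your ``hardest step''---producing a sign matrix $M^*$ with $g(M^*)\leq\delta'$, $g(M^*)\not\leq\delta$, yet $\delta\cup g(M^*)\subsetneq\delta'$---and you do not carry it out. This is not routine: a ``leaf-edge toggle'' must simultaneously respect the sign-matrix constraints (\ref{eq:sm1})--(\ref{eq:sm2}), keep every label of $g(M^*)$ inside the corresponding label of $\delta'$, and still leave at least one edge where $g(M^*)$ disagrees with $\delta$. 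In Theorem~\ref{thm:ineqthm} one perturbs \emph{real} entries along a circuit; here you must land exactly on another sign matrix, and a single local change at a leaf generally violates~(\ref{eq:rc}) at the adjacent vertex and forces further changes that may propagate through already-darkened edges of $\delta$ in ways you do not control. Until this construction is made precise, the proof has a genuine gap, whereas the paper's circuit-propagation argument avoids the difficulty entirely and is considerably shorter.
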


\begin{proof}
By convention, the empty component has $\mathcal{R}(\emptyset)=-1$. If $\delta$ is a $0$-dimensional component, $\mathcal{R}(\delta)=0$, as there are no regions in a $0$-dimensional component. Suppose a component $\delta\in\Lambda_{(m,n)}$ has $\mathcal{R}(\delta)=\omega$. We wish to show if $\delta\lessdot\delta'$ then $\mathcal{R}(\delta')\geq \omega+1$.  $\delta\lessdot\delta'$ implies that the labels of each edge of $\delta$ are subsets of the labels of each edge of $\delta'$. Thus all the $0$-dimensional components contained in $\delta$ are also contained in $\delta'$. $\delta'$ must contain at least one more $0$-dimensional component than $\delta$, otherwise $\delta'$ would equal $\delta$. 
This $0$-dimensional component differs from any other $0$-dimensional component in $\delta$ by at least one circuit of differing partial sums: consider a $0$-dimensional component in $\delta'$ that has a partial column sum that differs from the corresponding partial sum in any $0$-dimensional component in $\delta$. By Equation~(\ref{eq:rc}), at least one adjacent row or column partial sum of $\delta'$ must also differ from the corresponding partial sum in $\delta$. 
Thus, $\delta'$ has at least one new open or closed circuit of darkened edges, creating at least one new region. So $\mathcal{R}(\delta')\geq \omega+1$.
\end{proof}

We now define a map, which we show in Theorem~\ref{th:g_bijection} gives a bijection between faces of $P(m,n)$ and components of $\overline{\Gamma}_{(m,n)}$. 
\begin{definition}
\label{def:g(F)}
Given a collection of sign matrices $\mathcal{M}=\{M_1,M_2,\dots,M_q\}$, 
we define the map $g(\mathcal{M})=\displaystyle\bigcup_{i=1}^q g(M_i)$, where $g(M_i)$ is as in Definition~\ref{def:0-dim}.
\end{definition}

\begin{theorem}
\label{th:g_bijection}
Let $F$ be a face of $P(m,n)$ and $\mathcal{M}(F)$ be equal to the set of sign matrices that are vertices of $F$.
The map $\psi:F\mapsto g(\mathcal{M}(F))$ 
is a bijection between faces of $P(m,n)$ and components of $\overline{\Gamma}_{(m,n)}$.
\end{theorem}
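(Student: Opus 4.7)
The plan is to construct an explicit inverse $\phi$ to $\psi$, sending a component $\delta \in \Lambda_{(m,n)}$ to a face of $P(m,n)$, and then to verify that $\psi \circ \phi$ and $\phi \circ \psi$ are both the identity. This reduces the bijectivity to a direct matching between the three edge-label types of $\overline{\Gamma}_{(m,n)}$ and the three families of defining inequalities of $P(m,n)$ furnished by Theorem~\ref{thm:ineqthm}.

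Given a component $\delta$, I would define an equality system $\mathcal{E}_\delta$ consisting of $c_{ij}=0$ whenever the vertical edge from $(i,j)$ to $(i+1,j)$ carries label $0$ in $\delta$, $c_{ij}=1$ whenever that edge carries label $1$, and $r_{ij}=0$ whenever the horizontal edge from $(i,j)$ to $(i,j+1)$ carries label $0$; vertical edges labeled $\{0,1\}$ and horizontal edges labeled $\star$ impose no equation. Set $\phi(\delta) := \{X \in P(m,n) : X \text{ satisfies every equation in } \mathcal{E}_\delta\}$, with $\phi(\emptyset)$ taken to be the empty face. Each equation of $\mathcal{E}_\delta$ is a tightening of one of the defining inequalities of $P(m,n)$, so $\phi(\delta)$ is the intersection of $P(m,n)$ with supporting hyperplanes, hence a face. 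Its vertices are the vertices of $P(m,n)$ that lie in it, which by Theorem~\ref{thm:mnvertex} are sign matrices; and comparing $\mathcal{E}_\delta$ directly with Definition~\ref{def:0-dim} shows that a sign matrix $M$ satisfies $\mathcal{E}_\delta$ if and only if $g(M) \leq \delta$ in $\Lambda_{(m,n)}$. Therefore the vertex set of $\phi(\delta)$ is exactly $\{M : g(M) \leq \delta\}$.

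Both compositions then fall out. For $\psi \circ \phi(\delta)$, the vertex set of $\phi(\delta)$ is $\{M : g(M) \leq \delta\}$, so $\psi(\phi(\delta)) = \bigcup_{g(M)\leq \delta} g(M)$, which equals $\delta$ because by Definition~\ref{def:face} the component $\delta$ is a union of $0$-dimensional components, each of which is $\leq \delta$. For $\phi \circ \psi(F)$, standard polytope theory realizes $F$ as the subset of $P(m,n)$ satisfying exactly those defining inequalities of Theorem~\ref{thm:ineqthm} that are tight on every vertex of $F$, and unpacking Definitions~\ref{def:0-dim} and~\ref{def:union} shows that the corresponding edges of $g(\mathcal{M}(F))$ inherit a singleton label $0$ or $1$ (resp.\ a horizontal label $0$) precisely when the associated column (resp.\ row) partial sum agrees across all $M \in \mathcal{M}(F)$; so this tight-equality system is exactly $\mathcal{E}_{g(\mathcal{M}(F))}$, giving $\phi(\psi(F))=F$. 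The main care point, and the natural place to make an error, is this last correspondence: one must confirm that the binary $\{0,\star\}$ labeling of horizontal edges correctly reflects the absence of any upper-bound defining inequality on the row partial sums, so that no information about which defining inequalities of $P(m,n)$ are tight on $F$ is lost in passing from $F$ to $g(\mathcal{M}(F))$. Once this is verified, $\phi$ is a two-sided inverse of $\psi$, yielding the claimed bijection.
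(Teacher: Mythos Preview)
Your proposal is correct and follows essentially the same approach as the paper: both construct the inverse map $\varphi$ (your $\phi$) by intersecting the facets corresponding to the non-darkened edges of a component, and both verify that this is a genuine inverse to $\psi$. Your treatment is in fact slightly more complete than the paper's, since you explicitly check both compositions $\psi\circ\phi$ and $\phi\circ\psi$, whereas the paper only spells out $\psi\circ\varphi=\mathrm{id}$; your direct characterization of the vertex set of $\phi(\delta)$ as $\{M:g(M)\le\delta\}$ is the cleanest way to see why the first composition works, and your appeal to the standard fact that a face equals the intersection of the defining inequalities tight on it handles the second.
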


\begin{proof}
Let $F$ be a face of $P(m,n)$. Then $g(\mathcal{M}(F))$ is a component of $\overline\Gamma_{(m,n)}$ since $g(\mathcal{M}(F))=\displaystyle\bigcup_{i=1}^q g(M_i)$ is a union of 0-dimensional components.
We now construct the inverse of $\psi$, call it $\varphi$. Given a component $\nu$ of $\overline\Gamma_{(m,n)}$, let $\varphi(\nu)$ be the face that results as the intersection of the facets corresponding to the not darkened edges of $\nu$. 

We wish to show $\psi(\varphi(\nu))=\nu$. First, we show $\nu\subseteq \psi(\varphi(\nu))$. Let $M$ be a sign matrix such that $g(M)$ is a $0$-dimensional component of $\nu$. $M$ is in the intersection of the facets that yields $\varphi(\nu)$, since otherwise $g(M)$ would not be a $0$-dimensional component of $\nu$. Thus $g(M)$ is in $\psi(\varphi(\nu))$ as well. So $\nu\subseteq \psi(\varphi(\nu))$, which means the edge labels of $\nu$ must be subsets of the edge labels of $\psi(\varphi(\nu))$.  

Next, we show $\nu=\psi(\varphi(\nu))$. Suppose not. Then there exists some edge $e$ of $\Gamma_{(m,n)}$ whose label in $\psi(\varphi(\nu))$  strictly contains the  label of $e$ in $\nu$. Suppose $e$ is a horizontal edge, then the  label of $e$ in $\nu$ is $0$ and the  label of $e$ in $\psi(\varphi(\nu))$ is $\star$. Then the facet corresponding to the label 0 on $e$ would have been one of the facets intersected to get $\varphi(\nu)$. Therefore the matrix partial row sum corresponding to edge $e$ would be fixed as $0$ in each sign matrix in $\varphi(\nu)$. So in the union $\psi(\varphi(\nu))$, this edge label would be the union of the edge labels of all the sign matrices in $\varphi(\nu)$, and this union would be $0$. This is a contradiction. Now suppose $e$ is a vertical edge. Then the label of $e$ in $\nu$ is $0$ or $1$ and the label of $e$ in $\psi(\varphi(\nu))$ is $\{0,1\}$. Let $\gamma$ denote the label of $e$ in $\nu$. As in the previous case, the facet corresponding to the label $\gamma$ on $e$ would have been one of the facets intersected to get $\varphi(\nu)$. Therefore the matrix partial column sum corresponding to edge $e$ would be fixed as $\gamma$ in each sign matrix in $\varphi(\nu)$. So in the union $\psi(\varphi(\nu))$, that edge label would be the union of the edge labels of all the sign matrices in $\varphi(\nu)$, and this union would be $\gamma$. This is a contradiction. Thus $\nu=\psi(\varphi(\nu))$.
\end{proof}

\begin{figure}[htbp]
\begin{center}
\includegraphics[scale=.64]{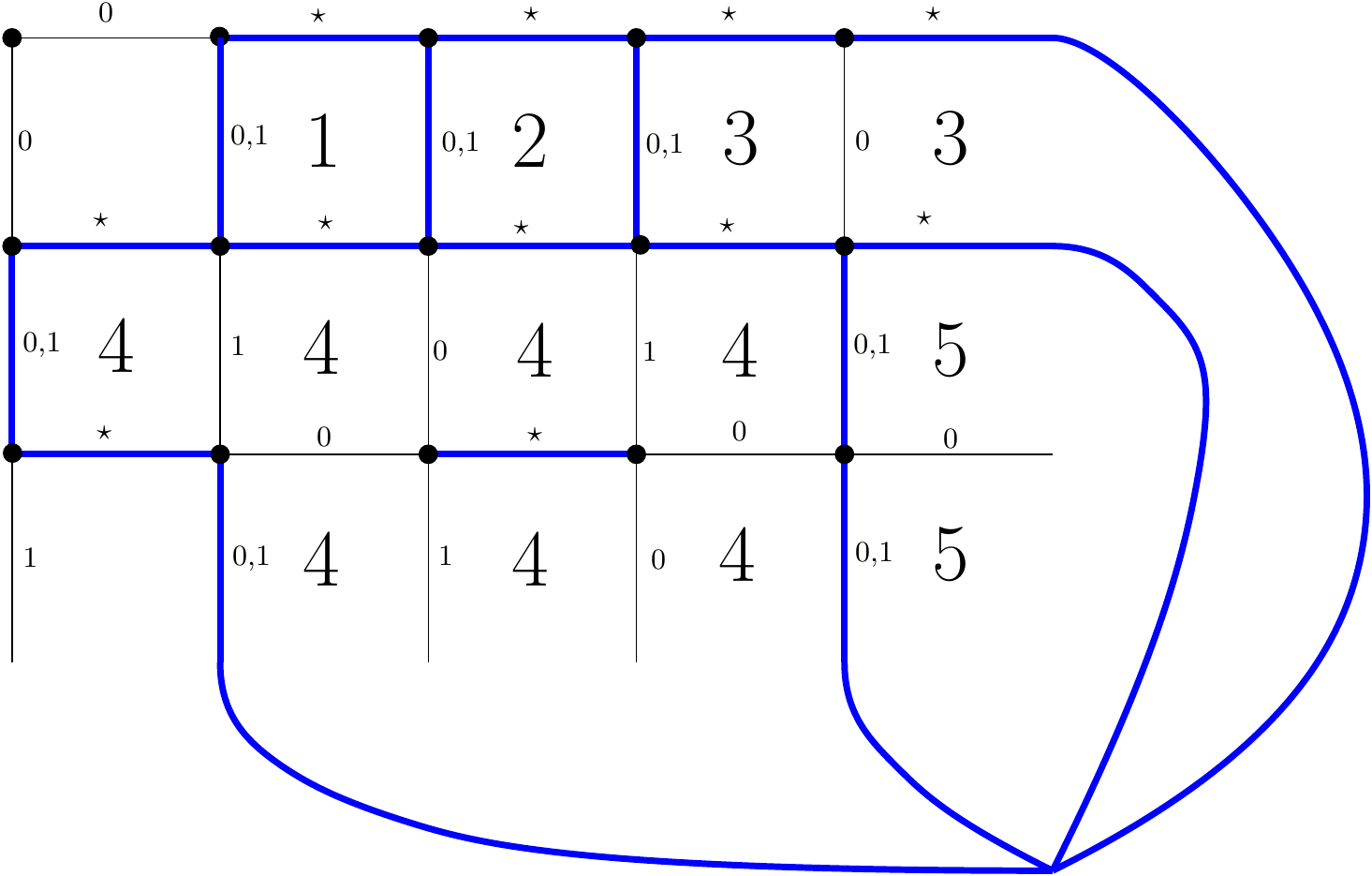}
\end{center}
\caption{A 5-dimensional face of $P(3,5)$, where the five regions are numbered.}
\label{fig:region}
\end{figure}

\begin{theorem}
\label{thm:poset_iso}
$\psi$ is a poset isomorphism. Moreover, the dimension of a face of $P(m,n)$ equals the number of regions of the corresponding component of $\overline\Gamma_{(m,n)}$. That is, for every face $F$ in $P(m,n)$,
\[ dim\; F= \mathcal{R}(\psi(F)).\]
\end{theorem}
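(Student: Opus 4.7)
The plan is to handle the poset isomorphism and the dimension formula in turn, combining Theorem~\ref{th:g_bijection} with a telescoping argument along maximal chains that uses Lemma~\ref{prop:regions}.

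First, to upgrade the bijection $\psi$ of Theorem~\ref{th:g_bijection} to a poset isomorphism, I would verify that $\psi$ both preserves and reflects inclusion. If $F \subseteq F'$ in the face lattice, then $\mathcal{M}(F) \subseteq \mathcal{M}(F')$, so $\psi(F) = g(\mathcal{M}(F))$ is a union of $0$-dimensional components all of which also appear in $\psi(F') = g(\mathcal{M}(F'))$, giving $\psi(F) \leq \psi(F')$ in $\Lambda_{(m,n)}$. Conversely, if $\psi(F) \leq \psi(F')$, then every $0$-dimensional subcomponent of $\psi(F)$ is also a $0$-dimensional subcomponent of $\psi(F')$; translating through the bijection between $0$-dimensional components and sign matrices (Definition~\ref{def:0-dim}), this says $\mathcal{M}(F) \subseteq \mathcal{M}(F')$, and since each face of a polytope is the convex hull of its vertices, $F \subseteq F'$ follows. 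Thus $\psi$ is a poset isomorphism, and in particular preserves covering relations.

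For the dimension formula, the strategy is a pigeonhole telescope along a maximal chain in the face lattice. Two boundary computations pin down the endpoints in $\Lambda_{(m,n)}$. Since $\mathcal{M}(\emptyset)=\varnothing$, $\psi$ sends the empty face to the empty component, with $\mathcal{R}=-1$. Since $\mathcal{M}(P(m,n))$ is all $m\times n$ sign matrices, $\psi(P(m,n))$ is the union of every $0$-dimensional component, which by Remark~\ref{remark:max} is the maximum element $\overline{\Gamma}_{(m,n)}$ of $\Lambda_{(m,n)}$. All edges of $\overline{\Gamma}_{(m,n)}$ are darkened, so its planar graph is the full $m\times n$ grid and $\mathcal{R}(\overline{\Gamma}_{(m,n)}) = mn$, which matches $\dim P(m,n) = mn$ from Proposition~\ref{prop:pmn_dim}.

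Now, given a face $F$ of dimension $d$, I extend it to a maximal chain
\begin{equation*}
\emptyset = F_{-1} \lessdot F_0 \lessdot \cdots \lessdot F_d = F \lessdot \cdots \lessdot F_{mn} = P(m,n),
\end{equation*}
using the standard gradedness of the face lattice of a convex polytope. Applying the poset isomorphism $\psi$ produces a chain of covers of length $mn+1$ in $\Lambda_{(m,n)}$. By Lemma~\ref{prop:regions} each cover increases $\mathcal{R}$ by at least $1$; the total increase equals $\mathcal{R}(\overline{\Gamma}_{(m,n)}) - \mathcal{R}(\emptyset) = mn+1$, which matches the number of covers exactly. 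The only way both counts agree is for every cover to contribute exactly $+1$ to $\mathcal{R}$, and telescoping back to $F$ gives $\mathcal{R}(\psi(F)) = -1 + (d+1) = d = \dim F$. The main obstacle is the a priori possibility that some cover contributes strictly more than $1$ to $\mathcal{R}$; this is precisely what the pigeonhole setup at the two endpoints of the chain is designed to rule out.
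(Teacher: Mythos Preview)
Your proof is correct and follows essentially the same approach as the paper: both establish the poset isomorphism by checking that $\psi$ preserves and reflects inclusion, and both prove the dimension formula via a maximal chain together with Lemma~\ref{prop:regions} and the pigeonhole observation that the total region increase from $\emptyset$ to $\overline{\Gamma}_{(m,n)}$ is exactly $mn+1$. The only minor difference is that for the order-reflecting direction the paper argues directly with the inverse map $\varphi$ (intersection of facets), whereas you argue via vertex sets; your step ``$g(M)$ a $0$-dimensional subcomponent of $\psi(F')$ implies $M\in\mathcal{M}(F')$'' is exactly what $\varphi$ supplies, so you may want to cite $\varphi$ from Theorem~\ref{th:g_bijection} there rather than Definition~\ref{def:0-dim}.
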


\begin{proof}
Let $F_1$ and $F_2$ be faces of $P(m,n)$ such that $F_1 \subseteq F_2$. Then $F_1$ is an intersection of $F_2$ and some facet hyperplanes. In other words, $F_1$ is obtained from $F_2$ by setting one of the inequalities in Theorem~\ref{thm:ineqthm} to an equality. We have that $\psi(F_1)$ is obtained from $\psi(F_2)$ by changing at least one darkened edge to a non-darkened edge. Therefore we have $\psi(F_1) \subseteq \psi(F_2)$.

Conversely, suppose that $\psi(F_1) \subseteq \psi(F_2)$. Recall the inverse of $\psi$ is $\varphi$, where for any component $\nu$ of $\Gamma_{(m,n)}$, $\varphi(\nu)$ is the face of $P(m,n)$ that results as the intersection of the facets corresponding to the not darkened edges of $\nu$. Now if $\psi(F_1) \subseteq \psi(F_2)$, the darkened edges of $\psi(F_1)$ are a subset of the darkened edges of $\psi(F_2)$, so the not darkened edges of $\psi(F_2)$ are a subset of the not darkened edges of $\psi(F_1)$. So $\varphi(\psi(F_1))$ is an intersection of the facets intersected in $\varphi(\psi(F_2))$ and some additional facets (if $F_1\neq F_2$). Thus $F_1=\varphi(\psi(F_1))\subseteq \varphi(\psi(F_2))=F_2$.

Now, we prove the dimension claim. Recall that dim$(P(m,n))=mn$. Since $\psi$ is a poset isomorphism, $\psi$ maps a maximal chain of faces $F_0 \subset F_1 \subset \cdots \subset F_{mn}$  to the maximal chain $\psi(F_0) \subset \psi(F_1) \subset \cdots \subset \psi(F_{mn})$ in the components of $\overline\Gamma_{(m,n)}$. We know that the maximal component of $\Lambda_{(m,n)}$ has $mn$ regions, thus the result follows by Lemma~\ref{prop:regions} and by noting that for components $\nu$ and $\nu'$, $\nu \subsetneq \nu'$ implies $\mathcal{R}(\nu) < \mathcal{R}(\nu')$.
\end{proof}

We now discuss the face lattice of $P(\lambda,n)$. We will restate the main result in this new setting, but since most of the definitions and proofs are exactly analogous, we only note where additional notation or arguments are needed.

\begin{definition}
\label{def:completegraph_shape}
Define the \emph{shape-complete partial sum graph} denoted $\overline\Gamma_{(\lambda,n)}$ as the following labeling of the graph $\Gamma_{(
\lambda_1,n)}$. The vertical edges are labeled $\{0,1,\{0,1\}\}$ as before. The horizontal edges are labeled with the fixed row sum $\{0,\star\}$, except the last horizontal edge in row $i$ is labeled with $a_{\lambda_1-i+1}$.
An example is shown in Figure~\ref{fig:shapecomplete}.
\end{definition}

\begin{figure}[htbp]
\begin{center}
\includegraphics[scale=.8]{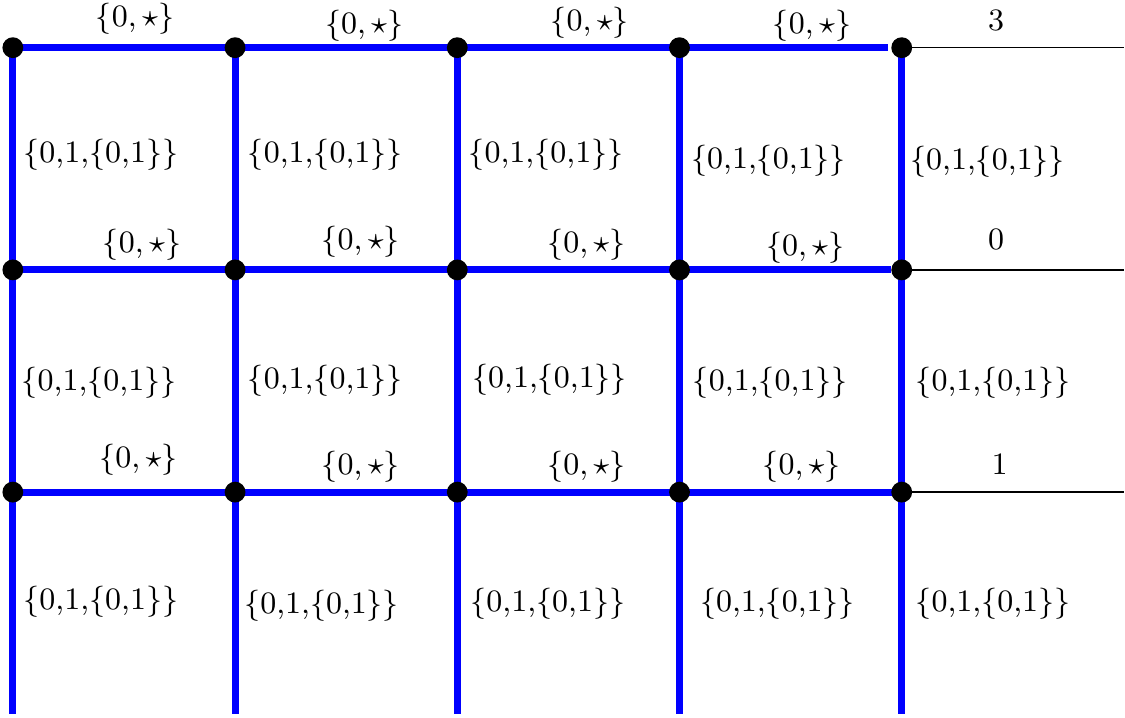}
\end{center}
\caption{The shape-complete partial sum graph of $P([3,3,3,1],5)$.}
\label{fig:shapecomplete}
\end{figure}

\begin{remark}
$0$-dimensional components, components, containment of components, and regions are defined analogously.
Let $\Lambda_{(\lambda,n)}$ denote the {partial order} on components of $\overline\Gamma_{(\lambda,n)}$  by containment.
\end{remark}

See Figure~\ref{fig:shaperegion} for an example of a component of $\Lambda_{(\lambda,n)}$.

\begin{remark}
\label{remark:max_shape}
Note the maximal component of $\Lambda_{(\lambda,n)}$ is the union of all $0$-dimensional components. Thus, it has labels $\{0,1\}$ on the vertical edges of $\Gamma_{(\lambda_1,n)}$ and $\star$ on the horizontal edges, but with the fixed row sums in the $n$th column.
\end{remark}

\begin{theorem}
\label{th:g_bijection_shape}
Let $F$ be a face of $P(\lambda,n)$ and $\mathcal{M}(F)$ be equal to the set of sign matrices that are vertices of $F$. The map $\psi:F\mapsto g(\mathcal{M}(F))$ is a bijection between faces of $P(\lambda,n)$ and components of $\overline{\Gamma}_{(\lambda,n)}$. Moreover, $\psi$ is a poset isomorphism, and the dimension of $F$ is equal to the number of regions of $\psi(F)$.
\end{theorem}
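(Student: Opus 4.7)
The plan is to mirror the arguments of Theorems~\ref{th:g_bijection} and \ref{thm:poset_iso}, with adaptations for the fixed row-sum constraints (\ref{eq:eq3}) defining $P(\lambda,n)$. For the bijection, first check that $\psi$ is well-defined: given a face $F$ with vertex set $\mathcal{M}(F) \subseteq M(\lambda,n)$, each $g(M)$ is a $0$-dimensional component of $\overline{\Gamma}_{(\lambda,n)}$ because $M$ satisfies (\ref{eq:eq3}), so the rightmost horizontal edge labels of $g(M)$ already agree with the fixed integers $a_{\lambda_1-i+1}$; their union is then a component by the obvious analogue of Remark~\ref{remark:union}. I would then construct the inverse $\varphi: \nu \mapsto F$ by intersecting the facets of $P(\lambda,n)$ corresponding to the non-darkened edges of $\nu$, using the facets identified in the proof of Theorem~\ref{thm:facets_lambda} and treating (\ref{eq:eq3}) as the ambient affine hull rather than as additional facet cuts. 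The argument that $\psi\circ\varphi = \mathrm{id}$ proceeds exactly as in Theorem~\ref{th:g_bijection}: if some edge label of $\psi(\varphi(\nu))$ strictly enlarges the corresponding label of $\nu$, then the hyperplane fixing that edge in $\nu$ was among those intersected in forming $\varphi(\nu)$, producing a contradiction.

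The poset-isomorphism claim follows as in the first half of Theorem~\ref{thm:poset_iso}: $F_1 \subseteq F_2$ if and only if $F_1$ is obtained from $F_2$ by imposing additional facet equalities, which on the component side removes darkenings. For the dimension claim I would establish the analogue of Lemma~\ref{prop:regions}, noting that the region-creating circuit argument there only used Equation~(\ref{eq:rc}) together with the open/closed circuit dichotomy, and the latter was introduced precisely for the shape setting in the proof of Theorem~\ref{thm:ineqthmshape}. By Proposition~\ref{prop:dim} we have $\dim P(\lambda,n) = \lambda_1(n-1)$, and a direct count shows that the maximal component of $\Lambda_{(\lambda,n)}$ has $\lambda_1(n-1)$ regions: the rightmost column of cells of $\Gamma_{(\lambda_1,n)}$ collapses into the exterior because the row-end edge labels are fully determined by (\ref{eq:eq3}), leaving $\lambda_1(n-1)$ interior cells. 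Combined with the maximal-chain argument from Theorem~\ref{thm:poset_iso}, this yields $\dim F = \mathcal{R}(\psi(F))$ for every face.

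The main obstacle will be making the region count rigorous in the presence of the hybrid labels on the right boundary of $\overline{\Gamma}_{(\lambda,n)}$: the rightmost edges carry arbitrary non-negative integers $a_{\lambda_1-i+1}$ rather than an element of $\{0,\star\}$, so Definition~\ref{def:region} must be applied with care in deciding which rightmost edges belong to the darkened subgraph. The cleanest resolution is to treat the entire right boundary as part of the exterior (since those labels are fixed across all of $P(\lambda,n)$), so that regions are counted only among the $\lambda_1(n-1)$ interior cells lying strictly to the left of column $n$. With this convention, the poset isomorphism forces the top element of $\Lambda_{(\lambda,n)}$ to have exactly $\lambda_1(n-1)$ regions matching the dimension, and the rest of the proof runs in parallel to Theorem~\ref{thm:poset_iso} with no further modification.
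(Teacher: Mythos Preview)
Your proposal is correct and follows essentially the same route as the paper: reduce to the proofs of Theorems~\ref{th:g_bijection} and~\ref{thm:poset_iso}, then verify that the region count of the maximal component of $\Lambda_{(\lambda,n)}$ matches $\dim P(\lambda,n)=\lambda_1(n-1)$. Your careful treatment of the rightmost horizontal edges (fixed labels $a_{\lambda_1-i+1}$, hence never darkened, hence absorbed into the exterior) is exactly the mechanism the paper uses implicitly via Definition~\ref{def:completegraph_shape}. One small omission: the paper's proof also records the $k=n$ case, where the last $\lambda_n$ rows have all column partial sums forced to $1$, so no vertical edges there are darkened and the maximal component has $(\lambda_1-\lambda_n)(n-1)$ regions, matching Proposition~\ref{prop:dim}; you cover only $k<n$, which is consistent with the paper's standing assumption after Proposition~\ref{prop:dim} but slightly less complete than the paper's own proof.
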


\begin{proof}
The proof is analogous to the proofs of Theorems~\ref{th:g_bijection} and \ref{thm:poset_iso}; we need only check that the number of regions of the maximal component of $\Lambda_{(\lambda,n)}$ matches the dimension of $P(\lambda,n)$.
Recall from Proposition~\ref{prop:dim} that the dimension of $P(\lambda,n)$ equals $\lambda_1(n-1)$ when $1 \leq k < n$, and $(\lambda_1 - \lambda_n)(n-1)$ when $k=n$. Note that when $1 \leq k < n$,
there are $\lambda_1(n-1)$ regions in the maximal component of $\Lambda_{(\lambda,n)}$. When $k=n$ the column partial sums in the last $\lambda_n$ rows of $\Gamma_{\lambda,n}$ are all fixed to be one, due to the first $\lambda_n$ columns of the tableau being $1,\ldots, n$. Thus there will be no darkened vertical edges in the bottom $\lambda_n$ rows, so these edges will not bound regions. So there will be $(\lambda_1 - \lambda_n)(n-1)$ regions in the maximal component of $\Lambda_{(\lambda,n)}$.
\end{proof}

\begin{figure}
\begin{center}
\includegraphics[scale=.6]{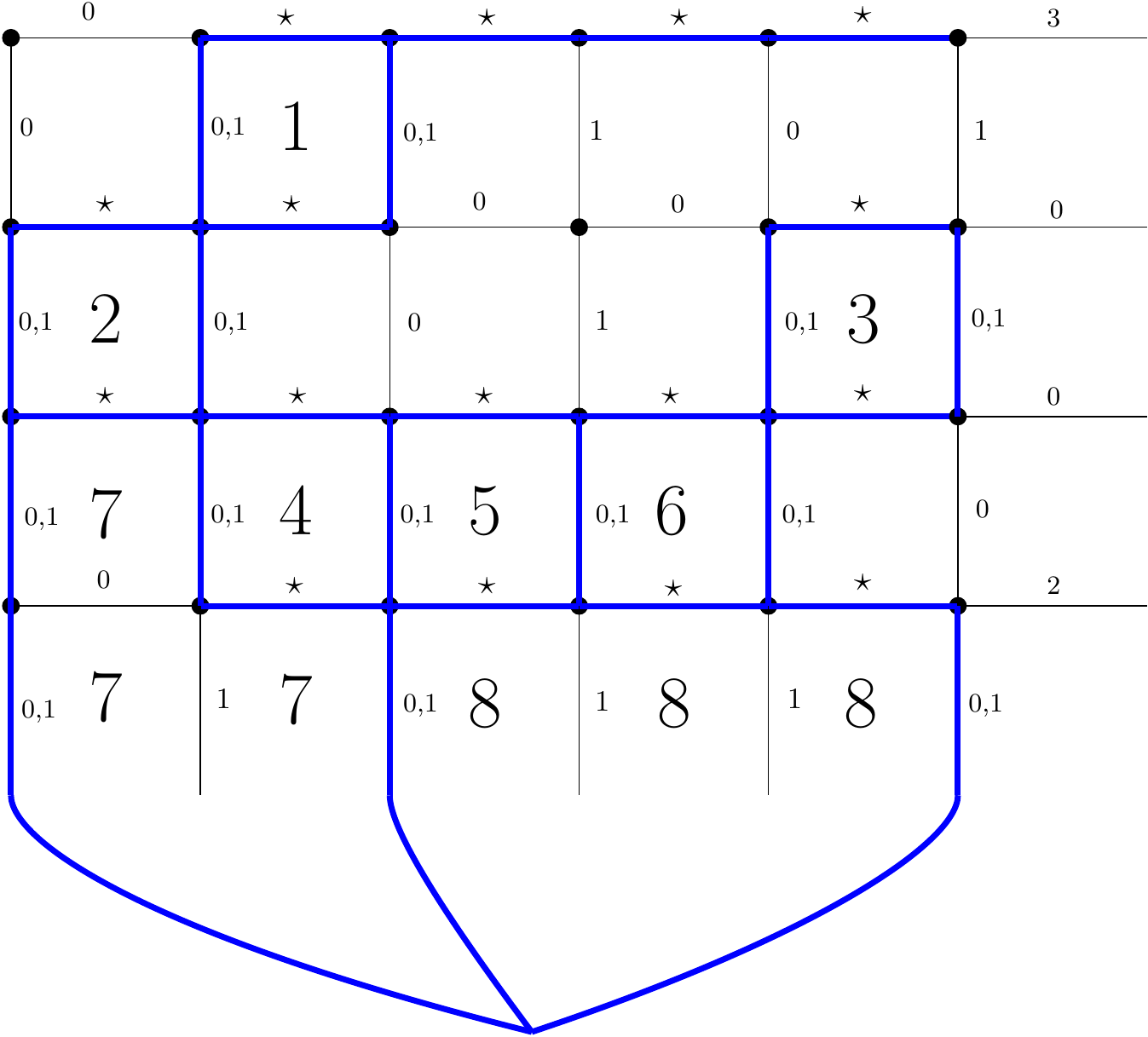}
\end{center}
\caption{An $8$-dimensional component of $P([4,4,4,1,1],6)$.}
\label{fig:shaperegion}
\end{figure}

\section{Connections and related polytopes}
\label{sec:connections}
In this section, we describe connections between sign matrix polytopes and related polytopes. First we describe how $P(\lambda,n)$ and $P(m,n)$ are related. We will need a few additional definitions in order to relate $P(\lambda,n)$ to $P(m,n)$ when $\lambda_1 < m$.

\begin{definition}
\label{def:lambda1lessthanm}
Fix $\lambda$ and $m$ such that $\lambda_1 \leq m$.
Let $M_m(\lambda,n)$ be the set of $m\times n$ sign matrices $M=(M_{ij})$ such that: 
\begin{align}
\label{eq:Mij_rowsum2}
M_{ij} &=0 & \mbox{ for all } 1 \leq i\leq m-\lambda_1 \\
\label{eq:Mij_rowsum3}
\displaystyle\sum_{j=1}^n M_{ij} &= a_{\lambda_1-(i-(m-\lambda_1))+1},
 & \mbox{ for all }m-\lambda_1+1\le i \le m.
\end{align}
Let \emph{$P_m(\lambda,n)$} be the polytope defined as the convex hull, as vectors in $\mathbb{R}^{m n}$, of all the matrices in $M_m(\lambda,n)$.
\end{definition}

Note that if $\lambda_1=m$, $M_m(\lambda,n)=M(\lambda,n)$ so that $P_m(\lambda,n)=P(\lambda,n)$.

\begin{remark}
The only difference between $M_m(\lambda,n)$ and $M(\lambda,n)$ is that we have inserted $m-\lambda_1$ additional rows of zeros at the top of each matrix. Therefore, $P_m(\lambda,n)$ and $P(\lambda,n)$ have all the same combinatorial properties (dimension, face lattice, volume, etc.); the only difference is their ambient dimensions.  In particular, a slight modification of the bijection of Theorem~\ref{thm:MtoSSYT} shows $M_m(\lambda,n)$ is also in bijection with $SSYT(\lambda,n)$.
\end{remark}

We give below an inequality description of $P_m(\lambda,n)$, whose proof follows immediately from Theorem~\ref{thm:ineqthmshape} and Definition~\ref{def:lambda1lessthanm}.
\begin{corollary}
\label{thm:ineqthmshape_m}
$P_m(\lambda,n)$ consists of all 
$m\times n$ real matrices $X=(X_{ij})$ such that:
\begin{align}
\label{eq:eq1m}
0 \leq \displaystyle\sum_{i'=1}^{i} X_{i'j} &\leq 1, &\mbox{ for all }1 \leq i\leq m, 1\le j\le n \\
\label{eq:eq2m}
0 \leq \displaystyle\sum_{j'=1}^{j} X_{ij'}, &
&\mbox{ for all }1 \leq i\leq m, 1\le j\le n \\
\label{eq:eq3m}
\displaystyle\sum_{j'=1}^n X_{ij'} &= a_{\lambda_1-(i-(m-\lambda_1))+1},
 & \mbox{ for all } m-\lambda_1+1\le i \le m \\
 \label{eq:eq4m}
X_{ij} &=0 & \mbox{ for all } 1 \leq i\leq m-\lambda_1, 1\le j\le n. 
\end{align}
\end{corollary}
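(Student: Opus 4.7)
The plan is to prove the two containments separately, reducing the reverse direction to Theorem~\ref{thm:ineqthmshape} via the padding trick implicit in Definition~\ref{def:lambda1lessthanm}.

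For the easy direction, I would take any $X \in P_m(\lambda,n)$ and write it as $X = \sum_\gamma \mu_\gamma M_\gamma$ with $\mu_\gamma \geq 0$, $\sum_\gamma \mu_\gamma = 1$, and each $M_\gamma \in M_m(\lambda,n)$. Since each $M_\gamma$ is a sign matrix, Definition~\ref{def:sm} gives (\ref{eq:eq1m}) and (\ref{eq:eq2m}) at once. Conditions (\ref{eq:eq3m}) and (\ref{eq:eq4m}) follow directly from (\ref{eq:Mij_rowsum3}) and (\ref{eq:Mij_rowsum2}) respectively, as convex combinations preserve both prescribed row sums and the zero entries in the top $m-\lambda_1$ rows.

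For the reverse direction, suppose $X$ is an $m \times n$ real matrix satisfying (\ref{eq:eq1m})--(\ref{eq:eq4m}). By (\ref{eq:eq4m}), the top $m-\lambda_1$ rows of $X$ are identically zero, so all the content of $X$ lives in the bottom $\lambda_1$ rows. Let $\widetilde{X}$ denote the $\lambda_1 \times n$ submatrix obtained by deleting these top zero rows. The column partial sums of $\widetilde{X}$ are exactly the column partial sums of $X$ in the relevant positions (since the deleted rows contribute $0$), and the row partial sums of $\widetilde{X}$ are precisely those of the bottom $\lambda_1$ rows of $X$. Thus (\ref{eq:eq1m}) and (\ref{eq:eq2m}) for $X$ translate into (\ref{eq:eq1}) and (\ref{eq:eq2}) for $\widetilde{X}$, while (\ref{eq:eq3m}) translates into (\ref{eq:eq3}) after the reindexing $i \mapsto i - (m - \lambda_1)$. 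Therefore $\widetilde{X}$ satisfies the hypotheses of Theorem~\ref{thm:ineqthmshape} and lies in $P(\lambda,n)$, so we may write $\widetilde{X} = \sum_\gamma \mu_\gamma \widetilde{M}_\gamma$ as a convex combination of matrices $\widetilde{M}_\gamma \in M(\lambda,n)$.

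To finish, I would pad each $\widetilde{M}_\gamma$ with $m-\lambda_1$ zero rows on top to produce a matrix $M_\gamma \in M_m(\lambda,n)$, at which point $X = \sum_\gamma \mu_\gamma M_\gamma$ exhibits $X$ as a convex combination of elements of $M_m(\lambda,n)$, hence $X \in P_m(\lambda,n)$. I do not expect any real obstacle here: the only nontrivial work has already been done in Theorem~\ref{thm:ineqthmshape}, and the corollary amounts to observing that prepending rows of zeros is transparent to both the inequality description and the convex-hull construction. The statement of the corollary is essentially a bookkeeping exercise making this equivalence explicit.
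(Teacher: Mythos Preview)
Your proposal is correct and matches the paper's approach: the paper simply states that the corollary ``follows immediately from Theorem~\ref{thm:ineqthmshape} and Definition~\ref{def:lambda1lessthanm},'' and your argument is precisely the unpacking of that sentence via the padding/deletion of the top $m-\lambda_1$ zero rows.
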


\smallskip

\begin{lemma}
\label{thm:hyper}
$P_m(\lambda,n)$ is the intersection of a ${\lambda_1 (n-1)}$--dimensional affine subspace of $\mathbb{R}^{mn}$ and $P(m,n)$.
\end{lemma}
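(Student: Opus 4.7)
The plan is to exhibit the affine subspace explicitly and then verify the set equality by matching inequality descriptions. Comparing the inequality description of $P(m,n)$ in Theorem~\ref{thm:ineqthm} with the inequality description of $P_m(\lambda,n)$ in Corollary~\ref{thm:ineqthmshape_m}, the two polytopes are cut out by the same inequalities (\ref{eq:eq1m}) and (\ref{eq:eq2m}); they differ only in that $P_m(\lambda,n)$ additionally satisfies the linear equalities (\ref{eq:eq3m}) and (\ref{eq:eq4m}). So the natural candidate affine subspace is
\[ A := \left\{X\in\mathbb{R}^{mn} : X \text{ satisfies }(\ref{eq:eq3m}) \text{ and }(\ref{eq:eq4m})\right\}, \]
and I would set out to show $P_m(\lambda,n)=A\cap P(m,n)$ and $\dim A=\lambda_1(n-1)$.

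First I would compute $\dim A$. The equations (\ref{eq:eq4m}) force $X_{ij}=0$ for all $1\le i\le m-\lambda_1$, $1\le j\le n$; this is $(m-\lambda_1)n$ equations in disjoint coordinates. The equations (\ref{eq:eq3m}) are $\lambda_1$ row-sum equations on rows $m-\lambda_1+1,\dots,m$, and each such equation involves a row of variables disjoint from every other equation in the system. Hence all $(m-\lambda_1)n+\lambda_1$ equations are linearly independent, and
\[ \dim A = mn - (m-\lambda_1)n - \lambda_1 = \lambda_1 n - \lambda_1 = \lambda_1(n-1), \]
as required.

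Next I would verify $P_m(\lambda,n)\subseteq A\cap P(m,n)$. Every vertex $M\in M_m(\lambda,n)$ satisfies (\ref{eq:eq3m}) and (\ref{eq:eq4m}) by Definition~\ref{def:lambda1lessthanm}, hence lies in $A$; it is also an $m\times n$ sign matrix, hence lies in $P(m,n)$. Since $A$ and $P(m,n)$ are both convex, any convex combination of such matrices lies in $A\cap P(m,n)$, giving the inclusion.

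For the reverse inclusion, let $X\in A\cap P(m,n)$. By Theorem~\ref{thm:ineqthm}, $X$ satisfies (\ref{eq:eq1m}) and (\ref{eq:eq2m}); by membership in $A$, it satisfies (\ref{eq:eq3m}) and (\ref{eq:eq4m}). These are exactly the conditions of Corollary~\ref{thm:ineqthmshape_m}, so $X\in P_m(\lambda,n)$. No step here is really an obstacle; the substance of the lemma is packaged entirely in the inequality descriptions already proved (Theorem~\ref{thm:ineqthm} and Corollary~\ref{thm:ineqthmshape_m}), and the only minor care point is the linear-independence count that pins down the dimension as $\lambda_1(n-1)$.
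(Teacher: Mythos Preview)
Your proposal is correct and follows essentially the same approach as the paper: both define the affine subspace by the equalities (\ref{eq:eq3m}) and (\ref{eq:eq4m}) and observe that the remaining inequalities are exactly those of $P(m,n)$. Your argument is in fact more detailed than the paper's, which omits the explicit linear-independence count you give for the dimension $\lambda_1(n-1)$.
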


\begin{proof}
The only differences between the inequality descriptions 
of $P_m(\lambda,n)$ and $P(m,n)$ are (\ref{eq:eq3m}) and (\ref{eq:eq4m}). (\ref{eq:eq4m}) fixes the first $m-\lambda_1$ matrix rows to contain all zeros, while
(\ref{eq:eq3m}) fixes the remaining row total sums in $P_m(\lambda,n)$.
So $P_m(\lambda,n)$ is the intersection of $P(m,n)$ and the affine subspace defined by (\ref{eq:eq3m}) and (\ref{eq:eq4m}).
\end{proof}

See Figure~\ref{fig:cubes} for an example.
\smallskip

\begin{figure}[htbp]
\begin{center}
\includegraphics[scale=1.2]{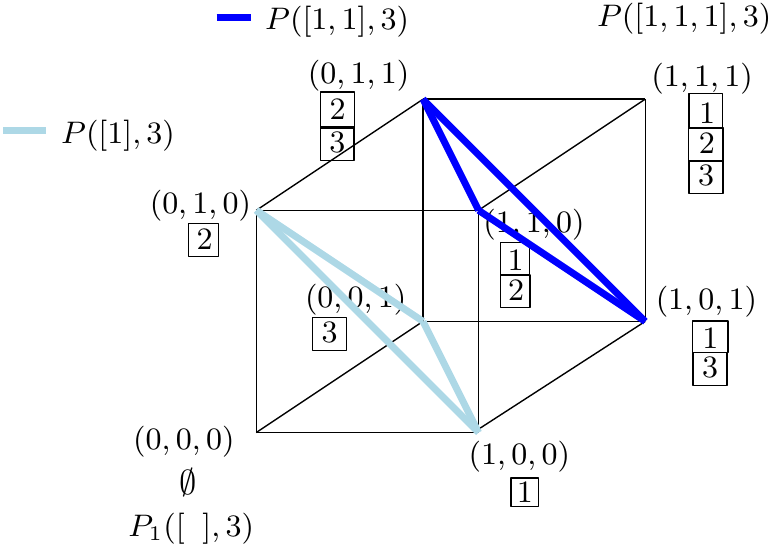}
\end{center}
\caption{The cube above is $P(1,3)$; the $P(\lambda,3)$ polytopes for each partition of shape $\lambda$ in a $1\times 3$ box are also indicated. 
$P_1([~],3)$ and $P([1,1,1],3)$ are each a single point, while $P([1],3)$ and $P([1,1],3)$ are the indicated triangles cutting through $P(1,3)$.
}
\label{fig:cubes}
\end{figure}

Alternating sign matrices are a motivating special case of sign matrices.  We give the usual definition below and then relate it to sign matrices.
\begin{definition}[\cite{MRRASM}]
\label{def:asm}
An \emph{alternating sign matrix} is a square matrix with entries in $\left\{-1,0,1\right\}$ such that the rows and columns each sum to one and the nonzero entries along any row or column alternate in sign. Let $A(n)$ denote the set of $n\times n$ alternating sign matrices.
\end{definition}

The following lemma is implicit in Aval's paper on sign matrices.
\begin{lemma}[\cite{aval}]
\label{prop:asm_sign_matrix}
$A(n)$ is the set of sign matrices $M=\left(M_{ij}\right)$   
in $M([n,n-1,\ldots,2,1],n)$ 
satisfying the additional requirement:
\begin{align}
\label{eq:asm2}
 &\displaystyle\sum_{j'=1}^{j} M_{ij'} \in\{0,1\} \mathrm{~for~all~} i,j.
\end{align}
\end{lemma}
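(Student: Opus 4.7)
The plan is to verify both inclusions by unpacking how the two partial-sum conditions interact with the shape $\lambda=[n,n-1,\ldots,2,1]$ and with entries in $\{-1,0,1\}$. The frequency representation of $[n,n-1,\ldots,2,1]$ is $[1,1,\ldots,1]$, so by Definition~\ref{def:MtoSSYT} the row-sum condition (\ref{eq:Mij_rowsum}) becomes $\sum_{j'=1}^n M_{ij'}=1$ for every row $i$. This is the bridge between the two formulations.

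For the forward inclusion, suppose $A=(A_{ij})\in A(n)$. Entries are in $\{-1,0,1\}$, rows and columns each sum to $1$, and nonzero entries along each row and column alternate in sign. A standard consequence of Definition~\ref{def:asm} is that since the nonzero entries in each column must begin with $+1$ (else an early partial sum would be $-1$, forcing a subsequent partial sum of $-2$ or disagreeing with the total of $1$) and alternate, the column partial sums from the top take values in $\{0,1\}$; this is exactly (\ref{eq:sm1}). The analogous argument for rows gives row partial sums from the left in $\{0,1\}$, which yields both (\ref{eq:sm2}) and the extra condition (\ref{eq:asm2}). Combined with $\sum_{j'=1}^n A_{ij'}=1=a_{\lambda_1-i+1}$ this places $A$ in $M([n,n-1,\ldots,2,1],n)$.

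For the reverse inclusion, let $M\in M([n,n-1,\ldots,2,1],n)$ satisfy (\ref{eq:asm2}). Then $M$ is $n\times n$ with entries in $\{-1,0,1\}$ and each row sums to $1$. To obtain column sums equal to $1$, I would use the double-counting identity
\[
\sum_{j=1}^n\Bigl(\sum_{i=1}^n M_{ij}\Bigr)=\sum_{i=1}^n\Bigl(\sum_{j=1}^n M_{ij}\Bigr)=n,
\]
together with (\ref{eq:sm1}) applied at $i=n$, which forces each $\sum_{i=1}^n M_{ij}\in\{0,1\}$. Since $n$ summands each at most $1$ add to $n$, every column sum equals $1$. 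Finally, for the alternating-sign property: reading along row $i$, the partial sums $\sum_{j'=1}^j M_{ij'}$ live in $\{0,1\}$ by (\ref{eq:asm2}), start at $0$, and end at $1$; each nonzero entry $M_{ij}=\pm 1$ toggles this value, so the nonzero entries must alternate, beginning with $+1$. The same argument applied to column partial sums via (\ref{eq:sm1}) gives the alternation down each column, completing the verification that $M\in A(n)$.

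I do not expect a real obstacle here; the content of the lemma is essentially the observation that the defining alternation in ASMs is equivalent to having both row and column partial sums constrained to $\{0,1\}$. The only care needed is in deriving that every column sum equals $1$ (rather than being allowed to be $0$), which is where the shape $[n,n-1,\ldots,1]$ enters through the total $\sum_i a_{\lambda_1-i+1}=n$.
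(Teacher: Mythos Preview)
Your proposal is correct and follows essentially the same approach as the paper's proof: both directions hinge on the fact that alternation plus row/column sum $1$ is equivalent to all row and column partial sums lying in $\{0,1\}$, and the reverse inclusion uses the total sum $n$ together with (\ref{eq:sm1}) at $i=n$ to force every column sum to equal $1$. Your write-up simply makes explicit a couple of steps (the double-counting and the toggling argument) that the paper leaves implicit.
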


\begin{proof}
Let $M\in A(n)$. Then the nonzero entries of $M$ alternate between $1$ and $-1$ across any row or column. The first nonzero entry in a row or column must be a $1$, since otherwise that row or column would not sum to $1$. Thus (\ref{eq:sm1}) and (\ref{eq:sm2}) from Definition~\ref{def:sm} of a sign matrix and (\ref{eq:asm2}) above are satisfied. Also in an alternating sign matrix all of the total row sums are $1$. Recall from (\ref{eq:Mij_rowsum}) that the row sums of a sign matrix equal $a_{\lambda_1-i+1}$, so since each row sum of $M$ is $1$, $M$ must be in $M([n,n-1,\ldots,2,1],n)$.

Now let $M\in M([n,n-1,\ldots,2,1],n)$ satisfy (\ref{eq:asm2}). $M$ is an $n\times n$ matrix whose rows each sum to $1$ since $M\in M([n,n-1,\ldots,2,1],n)$. By (\ref{eq:sm1}) and the fact that the sum of all the matrix entries is $n$, we have that the columns must each sum to $1$. Then (\ref{eq:sm1}) and (\ref{eq:asm2}) imply that the nonzero entries of $M$ alternate in sign along each row and column.
\end{proof}

\begin{remark}
\label{re:ASMstairs}
It is well-known (see e.g.~\cite{MRRASM}) that alternating sign matrices are in bijection with \emph{monotone triangles}, which are equivalent (by rotation) to
semistandard Young tableau of staircase shape with first column $(1,2,\ldots,n)$ and such that each northeast to southwest diagonal is weakly increasing. This bijection is a specialization of the bijection of Theorem~\ref{thm:MtoSSYT}.
\end{remark}

\begin{definition}[\cite{behrend,striker}]
The $n$th alternating sign matrix polytope, denoted $ASM_n$, is the convex hull of all the $n \times n$ alternating sign matrices, considered as vectors in $\mathbb{R}^{n^2}$. 
\end{definition} 

\begin{remark} Striker~\cite{striker} and Behrend and Knight~\cite{behrend} independently defined and proved several results about $ASM_n$. The dimension of $ASM_n$ is $(n-1)^2$, an inequality description of $ASM_n$ is that the rows and columns sum to $1$ and the partial sums are between $0$ and $1$, and the vertices of $ASM_n$ are all the $n\times n$ alternating sign matrices~\cite{behrend,striker}. $ASM_n$ has $4[(n-2)^2+1]$ facets and a nice face lattice description~\cite{striker}. These are a few of the results that inspired the research of this paper.

Some further properties of $ASM_n$ were studied by Brualdi and Dahl~\cite{BrualdiDahl}.
These include results regarding edges of $ASM_n$, an alternative
proof of the characterization of the vertices of $ASM_n$, and an alternative
proof of the linear characterization of $ASM_n$.
\end{remark}

We see the connection between $P(\lambda,n)$ and $ASM_n$ in the following theorem.

\begin{lemma}
\label{thm:staircase}
$P([n, n-1, \cdots, 2, 1],n)$ contains $ASM_n$.
\end{lemma}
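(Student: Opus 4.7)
The proof is essentially a one-line consequence of Lemma~\ref{prop:asm_sign_matrix} together with the fact that both objects are defined as convex hulls.

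My plan is as follows. By Lemma~\ref{prop:asm_sign_matrix}, every $n \times n$ alternating sign matrix lies in $M([n,n-1,\ldots,2,1],n)$, since an ASM is characterized there as a sign matrix in $M([n,n-1,\ldots,2,1],n)$ satisfying the additional row-partial-sum condition (\ref{eq:asm2}). Hence we have the set-theoretic containment
\[
A(n) \subseteq M([n,n-1,\ldots,2,1],n).
\]

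Now $ASM_n$ is defined as the convex hull of $A(n)$ as vectors in $\mathbb{R}^{n^2}$, and $P([n,n-1,\ldots,2,1],n)$ is defined as the convex hull of $M([n,n-1,\ldots,2,1],n)$ in the same ambient space (both are $n \times n$ matrices). Since taking convex hulls is monotonic with respect to containment, we immediately obtain
\[
ASM_n = \operatorname{conv}(A(n)) \subseteq \operatorname{conv}(M([n,n-1,\ldots,2,1],n)) = P([n,n-1,\ldots,2,1],n),
\]
which is the desired containment.

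There is no real obstacle here; the entire content of the lemma is repackaging Lemma~\ref{prop:asm_sign_matrix}. The only minor thing worth noting in the write-up is that both polytopes live naturally in $\mathbb{R}^{n^2}$, so the containment is literal and requires no identification of ambient spaces.
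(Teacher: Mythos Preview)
Your proof is correct and follows essentially the same approach as the paper: both use Lemma~\ref{prop:asm_sign_matrix} to obtain $A(n)\subseteq M([n,n-1,\ldots,2,1],n)$ and then pass to convex hulls by monotonicity.
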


\begin{proof}
Lemma~\ref{prop:asm_sign_matrix} gives that the set of $n\times n$ alternating sign matrices is a subset of $M([n, n-1, \cdots, 2, 1],n)$. So the convex hull of $n\times n$ alternating sign matrices will be contained in the convex hull of $M([n, n-1, \cdots, 2, 1],n)$, which is $P([n, n-1, \cdots, 2, 1],n)$.
\end{proof}

The Birkhoff polytope contains no lattice points except the permutation matrices, which are its vertices. 
We show something similar happens in the case of sign matrices and alternating sign matrices.

\begin{theorem}
\label{prop:lattice_points}
There are no lattice points in $P(m,n)$, $P(\lambda,n)$, or $ASM_n$ 
other than the matrices used to construct them. 
\end{theorem}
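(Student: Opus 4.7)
The plan is to exploit the inequality descriptions already established (Theorems~\ref{thm:ineqthm} and~\ref{thm:ineqthmshape}, and the analogous description of $ASM_n$ from~\cite{behrend,striker}), together with the simple observation that a lattice point has integer partial sums. In each case, combining the integrality of the partial sums with the bounds from the inequality description forces them to attain only the integer values in their allowed range, which turns the defining inequalities into the defining conditions of a sign matrix (or alternating sign matrix).

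First, for $P(m,n)$, suppose $X \in P(m,n)$ has integer entries. By Theorem~\ref{thm:ineqthm}, the column partial sums $c_{ij} = \sum_{i'=1}^{i} X_{i'j}$ satisfy $0 \leq c_{ij} \leq 1$, and since they are integers we conclude $c_{ij} \in \{0,1\}$. This is exactly condition~(\ref{eq:sm1}) in Definition~\ref{def:sm}. The nonnegativity of $\sum_{j'=1}^{j} X_{ij'}$ gives~(\ref{eq:sm2}) at once. Furthermore, $X_{ij} = c_{ij} - c_{i-1,j} \in \{-1,0,1\}$, so $X$ is an $m \times n$ sign matrix, and therefore a vertex of $P(m,n)$ by Theorem~\ref{thm:mnvertex}.

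For $P(\lambda,n)$, I would repeat the argument verbatim using Theorem~\ref{thm:ineqthmshape}: the integrality of $c_{ij}$ together with (\ref{eq:eq1}) forces $c_{ij}\in\{0,1\}$, inequality (\ref{eq:eq2}) gives the row partial sum condition, and the fixed row sums in (\ref{eq:eq3}) are exactly (\ref{eq:Mij_rowsum}) from Definition~\ref{def:MtoSSYT}. Hence $X \in M(\lambda,n)$ and is a vertex by Theorem~\ref{thm:lambdavertex}. For $ASM_n$, the inequality description (rows and columns sum to $1$, and all row and column partial sums lie in $[0,1]$) shows that any integer point has partial sums in $\{0,1\}$; this, together with the unit row/column sum conditions, is exactly the hypothesis of Lemma~\ref{prop:asm_sign_matrix}, so $X \in A(n)$.

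There is no substantive obstacle: the result is essentially a corollary of the inequality descriptions, where the real work has already been carried out. The only point worth flagging is to ensure that in each of the three cases one verifies that \emph{all} the matrix entries $X_{ij}$ land in $\{-1,0,1\}$, which follows immediately from writing $X_{ij}$ as a difference of two column partial sums each lying in $\{0,1\}$.
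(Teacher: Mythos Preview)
Your argument is correct and the core idea is identical to the paper's: an integer point satisfying $0\le c_{ij}\le 1$ forces $c_{ij}\in\{0,1\}$, and the rest of the sign-matrix (resp.\ alternating-sign-matrix) axioms drop out immediately.

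The only organizational difference is that the paper handles $P(\lambda,n)$ and $ASM_n$ by containment rather than by repeating the argument: it invokes Lemma~\ref{thm:hyper} to place $P(\lambda,n)\subseteq P(\lambda_1,n)$ and Lemma~\ref{thm:staircase} to place $ASM_n\subseteq P([n,n-1,\dots,1],n)\subseteq P(n,n)$, so any lattice point in either is already a sign matrix by the $P(m,n)$ case, and the remaining equalities (row sums for $P(\lambda,n)$; row/column sums and row-partial-sum bound for $ASM_n$) pin it down to $M(\lambda,n)$ or $A(n)$. Your direct treatment of each case via its own inequality description is equally valid and arguably more self-contained; the paper's route is slightly more economical but depends on having stated those containment lemmas.
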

\begin{proof}
Let $M$ be an integer-valued matrix inside the polytope $P(m,n)$. Then $M$ fits the inequality description of $P(m,n)$. From the inequalities, all partial column sums are either $0$ or $1$, thus the entries of $M$ must be in $\{-1,0,1\}$. Also, all partial row sums are nonnegative, so $M$ satisfies the definition of an $m\times n$ sign matrix.

By Lemma~\ref{thm:hyper}, $P(\lambda,n)$ is contained in $P(\lambda_1,n)$.
By Lemma~\ref{thm:staircase}, $ASM_n$ is contained in $P([n, n-1, \cdots, 2, 1],n)$ which by Theorem~\ref{thm:hyper} is contained in $P(n,n)$. Thus, the results follow.
\end{proof}

\section{$P(v,\lambda,n)$ and transportation polytopes}
\label{sec:transportation}
Thus far in this paper, we have defined and studied the sign matrix polytope $P(m,n)$ and the polytope $P(\lambda,n)$ whose vertices are the sign matrices with row sums determined by $\lambda$. We may furthermore restrict to sign matrices with prescribed column sums; we define this polytope below, calling it $P(v,\lambda,n)$. We show in Theorem~\ref{thm:transportation} that the nonnegative part of this polytope is a transportation polytope.

\begin{definition}
Let $\lambda$ be a partition with $k$ parts and $v$ a vector of length $k$ with strictly increasing entries at most $n$.
Let $SSYT(v,\lambda,n)$ denote the set of semistandard Young tableaux of shape $\lambda$ with entries at most $n$ and first column $v$. 
\end{definition}
For example, the tableau of Figure~\ref{fig:ydssyt} is in $SSYT((1,2,3,6),[6,3,3,1],n)$ for any $n \geq 7$.

\begin{remark}
We do not know an enumeration for $SSYT(v,\lambda,n)$, though the numbers we have calculated look fairly nice.
\end{remark}

\begin{definition}
\label{def:MtoSSYTv}
Fix $\lambda$ and $n\in\mathbb{N}$ and $v$ a vector of length $k$ with strictly increasing entries at most $n$.
Let \emph{$M(v,\lambda,n)$} be the set of $M\in M(\lambda,n)$ such that:
\begin{align}
\label{eq:Mij_colsum_v}
\displaystyle\sum_{i=1}^{\lambda_1} M_{ij} &= 1, & \mbox{ if } j\in v\mbox{ and } 0\mbox{ otherwise.} 
\end{align}
\end{definition}

\begin{theorem}
$M(v,\lambda,n)$ is in explicit bijection with $SSYT(v,\lambda,n)$.
\label{prop:MtoSSYTv}
\end{theorem}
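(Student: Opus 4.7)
The plan is to realize Theorem~\ref{prop:MtoSSYTv} as an immediate restriction of the bijection $\Phi : M(\lambda,n) \to SSYT(\lambda,n)$ established in Theorem~\ref{thm:MtoSSYT}, by showing that the additional constraint (\ref{eq:Mij_colsum_v}) on the matrix side translates exactly into the constraint that the first column of the tableau equals $v$.

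First I would recall from the proof of Theorem~\ref{thm:MtoSSYT} that under $\Phi$, the entries of row $i$ of $M$ determine column $\lambda_1-i+1$ of $T = \Phi(M)$: the entries of that column, read top to bottom, are precisely those indices $j$ with $\sum_{i'=1}^{i} M_{i'j} = 1$, listed in increasing order. Specializing to $i = \lambda_1$ gives that the first column of $T$ is the increasing list of those $j$ for which $\sum_{i'=1}^{\lambda_1} M_{i'j} = 1$. Since by (\ref{eq:sm1}) every column partial sum is either $0$ or $1$, the first column of $T$ equals $v$ if and only if $\sum_{i'=1}^{\lambda_1} M_{i'j} = 1$ exactly when $j \in v$ and $0$ otherwise, which is precisely (\ref{eq:Mij_colsum_v}).

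Therefore $\Phi$ restricts to a well-defined map $M(v,\lambda,n) \to SSYT(v,\lambda,n)$, and the inverse $\Phi^{-1}$ (which exists by Theorem~\ref{thm:MtoSSYT}) restricts in the reverse direction by the same equivalence: a tableau with first column $v$ must correspond under $\Phi^{-1}$ to a matrix whose $\lambda_1$-th row of column partial sums is $1$ exactly on the coordinates in $v$. I do not anticipate a real obstacle here; the only small consistency check is that the length of $v$ equals $k$, the length of the first column of any tableau of shape $\lambda$, which is built into the hypothesis on $v$ in Definition~\ref{def:MtoSSYTv}.
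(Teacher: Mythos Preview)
Your proposal is correct and follows essentially the same approach as the paper: both restrict the bijection of Theorem~\ref{thm:MtoSSYT} by observing that the column-sum condition (\ref{eq:Mij_colsum_v}) on $M$ corresponds exactly to the first column of $\Phi(M)$ being $v$, since row $\lambda_1$ of $M$ determines column $1$ of the tableau. Your version is slightly more explicit in invoking the specialization $i=\lambda_1$ and the fact that column partial sums lie in $\{0,1\}$, but the argument is the same.
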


\begin{proof}
We know that $M(\lambda,n)$ is in bijection with $SSYT(\lambda,n)$ from Theorem~\ref{thm:MtoSSYT}. So we only need to check (\ref{eq:Mij_colsum_v}). Consider $M \in M(v,\lambda,n)$ and follow the bijection of Theorem~\ref{thm:MtoSSYT} to construct the corresponding $T \in SSYT(\lambda,n)$. Recall that in $M(v,\lambda,n)$, $v$ records which columns of $M$ have a total sum of $1$. Thus, the numbers in $v$ are the entries of $T$ in the first column of $\lambda$, so $T \in SSYT(v,\lambda,n)$. 

Now consider $T \in SSYT(v,\lambda,n)$ and its corresponding sign matrix $M\in M(\lambda,n)$. The first column of $T$ is fixed to be the numbers in $v$. The first column of $T$ gets mapped to the last row of $M$.
That is, for each number in the first column of $T$, the corresponding column of $M$ will sum to $1$. The rest of the columns of $M$ will sum to $0$.
Thus $M \in M(v,\lambda,n)$.
\end{proof}

\begin{definition}
Let \emph{$P(v,\lambda,n)$} be the polytope defined as the convex hull, as vectors in $\mathbb{R}^{\lambda_1 n}$, of all the matrices in $M(v,\lambda,n)$. We say this is the sign matrix polytope with row sums determined by $\lambda$ and column sums determined by $v$.
\end{definition}

We now discuss analogous properties to those proved in the rest of the paper regarding $P(m,n)$ and $P(\lambda,n)$. Since many of these proofs are very similar to proofs we have already discussed,  we only note how the proofs differ from those in the other cases.

\begin{proposition}
\label{prop:transdim}
The dimension of $P(v,\lambda, n)$ is $(\lambda_1-1)(n-1)$ if $1 \leq k < n$. When $k=n$, the dimension is $(\lambda_1 - \lambda_n)(n-1).$ 
\end{proposition}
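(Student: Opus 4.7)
The plan is to mirror the proof of Proposition~\ref{prop:dim}, accounting for the new column sum constraints, and handle the two cases $1\le k<n$ and $k=n$ separately.

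For the case $1\le k<n$, I would first argue the upper bound $\dim P(v,\lambda,n)\le (\lambda_1-1)(n-1)$ by counting constraints on the ambient space $\mathbb{R}^{\lambda_1 n}$. As in Proposition~\ref{prop:dim}, the row sum equations from (\ref{eq:Mij_rowsum}) determine the last column entry of each row, removing $\lambda_1$ degrees of freedom. The column sum equations from (\ref{eq:Mij_colsum_v}) impose $n$ additional linear conditions, but one is redundant: summing the row sums gives $\sum_i a_{\lambda_1-i+1}=k$, and summing the column sums gives $|v|=k$, so the two totals agree automatically. This leaves $n-1$ independent column sum conditions, producing the upper bound $\lambda_1 n-\lambda_1-(n-1)=(\lambda_1-1)(n-1)$.

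For the matching lower bound I would exhibit $(\lambda_1-1)(n-1)+1$ affinely independent sign matrices in $M(v,\lambda,n)$. One clean way: pick a ``base'' matrix $M_0\in M(v,\lambda,n)$ (which exists since $SSYT(v,\lambda,n)$ is nonempty for any valid $v$, by building a tableau greedily from the prescribed first column), then for each pair $(i,j)$ with $1\le i\le \lambda_1-1$ and $1\le j\le n-1$ construct a sign matrix $M_{ij}\in M(v,\lambda,n)$ differing from $M_0$ by adding a small closed circuit of $\pm 1$ entries localized near position $(i,j)$ in such a way that $M_{ij}-M_0$ has its unique nonzero entry in the top-left $(\lambda_1-1)\times(n-1)$ submatrix at $(i,j)$. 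These differences are linearly independent in that submatrix, so the $M_{ij}$ together with $M_0$ are affinely independent. Alternatively, and probably more efficient, one can invoke the forthcoming Theorem~\ref{thm:transportation}: the nonnegative part of $P(v,\lambda,n)$ is a transportation polytope with prescribed row and column margins, and such a transportation polytope with consistent nonzero margins is known to have dimension exactly $(\lambda_1-1)(n-1)$ (it is not contained in a proper subspace of the affine hull defined by the margins). Since the nonnegative part sits inside $P(v,\lambda,n)$, the lower bound follows.

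For the case $k=n$, the vector $v$ is forced: it is a strictly increasing sequence of length $n$ with entries at most $n$, hence $v=(1,2,\dots,n)$. Every element of $M(\lambda,n)$ already has total entry sum equal to $\sum_i a_{\lambda_1-i+1}=k=n$, and since each column partial sum lies in $\{0,1\}$, each of the $n$ columns must sum to exactly $1$. Thus the column sum conditions of Definition~\ref{def:MtoSSYTv} are automatic, and $P(v,\lambda,n)=P(\lambda,n)$; the dimension statement then reduces to Proposition~\ref{prop:dim}.

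The main obstacle is the lower bound in the case $1\le k<n$, i.e., verifying that the polytope is full-dimensional inside the affine subspace cut out by the row and column sum equations. Using the transportation polytope connection of Theorem~\ref{thm:transportation} sidesteps the need to explicitly produce a large affinely independent family; otherwise one must carefully describe local perturbations of a single vertex by $2\times 2$ sign swaps to generate every direction in the margin-fixing subspace.
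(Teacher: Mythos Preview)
Your upper bound argument and the $k=n$ reduction are exactly what the paper does: the paper's proof is just the informal constraint count (last column determined by the row sums, last row determined by the column sums, hence $(\lambda_1-1)(n-1)$ free entries) together with the observation that $k=n$ forces $v=(1,\dots,n)$, so $P(v,\lambda,n)=P(\lambda,n)$ and Proposition~\ref{prop:dim} applies. The paper does not give a separate lower bound argument beyond asserting that ``these are the only restrictions.''

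Your attempt to supply a rigorous lower bound via Theorem~\ref{thm:transportation}, however, has a genuine gap. When $k<n$ there are columns $j\notin v$ with margin $z_j=0$, and in a transportation polytope a zero column margin forces that entire column to vanish; likewise any row $i$ with $a_{\lambda_1-i+1}=0$ is forced to vanish. After deleting these, the nonnegative part of $P(v,\lambda,n)$ is a transportation polytope of dimension at most $(D(\lambda)-1)(k-1)$, which is in general strictly smaller than $(\lambda_1-1)(n-1)$. Concretely, take $\lambda=[2,1]$, $n=3$, $v=(1,2)$: then $(\lambda_1-1)(n-1)=2$, but the nonnegative part is the segment between $\left(\begin{smallmatrix}1&0&0\\0&1&0\end{smallmatrix}\right)$ and $\left(\begin{smallmatrix}0&1&0\\1&0&0\end{smallmatrix}\right)$, which is one-dimensional; the missing direction is supplied by the genuinely signed vertex $\left(\begin{smallmatrix}0&0&1\\1&1&-1\end{smallmatrix}\right)$. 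So the negative entries are essential to attaining the full dimension, and the transportation polytope cannot witness it. Your alternative explicit-construction idea (perturbing a base vertex by $2\times 2$ swaps through the $(\lambda_1,n)$ corner) is the right direction if you want a rigorous lower bound, but it needs a careful choice of $M_0$ to guarantee each swap remains a sign matrix.
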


\begin{proof}
Since each matrix in $M(v,\lambda,n)$ is $\lambda_1 \times n$, the ambient dimension is $\lambda_1 n$. 
However, when constructing the sign matrix corresponding to a tableau of shape $\lambda$, as in Theorem~\ref{thm:MtoSSYT}, the last column is determined by the shape $\lambda$ via the prescribed row sums (\ref{eq:Mij_rowsum}) of Definition~\ref{def:MtoSSYT}. The last row of the matrix is determined by $v$ using (\ref{eq:Mij_colsum_v}). These are the only restrictions on the dimension when $1 \leq k < n$, reducing the free entries in the matrix by one column and one row. Thus, 
the dimension is $(\lambda_1-1)(n-1)$. When $k=n$, it must be that $v=(1,2,\ldots,n)$ and $P(v,\lambda,n)$ equals $P(\lambda,n)$, so we reduce to this case.
\end{proof}

\begin{theorem}
\label{thm:v_lambdavertex}
The vertices of $P(v,\lambda,n)$ are the sign matrices $M(v,\lambda,n)$.
\end{theorem}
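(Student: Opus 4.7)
The plan is to mirror the proof of Theorem~\ref{thm:lambdavertex} essentially verbatim, exploiting the fact that $M(v,\lambda,n) \subseteq M(\lambda,n)$ by Definition~\ref{def:MtoSSYTv} (the extra column-sum condition (\ref{eq:Mij_colsum_v}) only cuts down the set; it does not enlarge it). Consequently $P(v,\lambda,n) \subseteq P(\lambda,n)$ as a subpolytope.

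One direction is immediate: since $P(v,\lambda,n)$ is defined as the convex hull of $M(v,\lambda,n)$, every vertex of $P(v,\lambda,n)$ must lie in $M(v,\lambda,n)$. For the other direction, fix $M \in M(v,\lambda,n)$ and reuse verbatim the hyperplane constructed in the proof of Theorem~\ref{thm:lambdavertex}. Namely, letting $c_{ij}$ be the column partial sums of $M$ and $C_M = \{(i,j) : c_{ij}=1\}$, take
\[
H_M(X) \;:=\; \sum_{(i,j)\in C_M}\sum_{i'=1}^{i} X_{i'j} \;=\; |\lambda|-\tfrac{1}{2}.
\]
Because $M \in M(\lambda,n)$, the argument of Theorem~\ref{thm:lambdavertex} already shows that $H_M(M)=|C_M|=|\lambda|$ while $H_M(M')<|\lambda|-\tfrac{1}{2}$ for \emph{every} $M' \in M(\lambda,n)\setminus\{M\}$. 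Restricting to $M' \in M(v,\lambda,n)\setminus\{M\}$ (a subset of the above), the same hyperplane still separates $M$ from every other element of $M(v,\lambda,n)$, proving $M$ is a vertex of $P(v,\lambda,n)$.

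There is no real obstacle in this argument: the additional column-sum constraint imposed by $v$ only shrinks the vertex set, and the Theorem~\ref{thm:lambdavertex} separation is a fortiori strong enough. The only point worth double-checking is that the quantity $|C_M|$ used to define the hyperplane is still $|\lambda|$ for every $M \in M(v,\lambda,n)$; this follows immediately from the containment $M(v,\lambda,n)\subseteq M(\lambda,n)$ together with the identity $|C_M|=|\lambda|$ for sign matrices of shape $\lambda$ established in the proof of Theorem~\ref{thm:lambdavertex}.
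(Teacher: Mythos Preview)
Your proof is correct and follows exactly the paper's approach: the paper's proof is a single sentence observing that the hyperplane from Theorem~\ref{thm:lambdavertex} already separates a given sign matrix from all other matrices in $M(\lambda,n)$, hence in particular from all other matrices in the subset $M(v,\lambda,n)$. Your write-up is just a more explicit version of this, including the (trivial) forward direction that vertices lie in $M(v,\lambda,n)$.
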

\begin{proof}
The hyperplane constructed in the proof of Theorem~\ref{thm:lambdavertex} separates a given sign matrix from all other sign matrices in $M(\lambda,n)$, which includes $M(v,\lambda,n)$.
\end{proof}

\begin{theorem}
\label{thm:v_ineqthmshape}
$P(v,\lambda,n)$ consists of all 
$\lambda_1\times n$ real matrices $X=(X_{ij})$ such that:
\begin{align}
\label{eq:eq1t}
0 \leq \displaystyle\sum_{i'=1}^{i} X_{i'j} &\leq 1, &\mbox{ for all }1 \leq i\leq \lambda_1, 1\le j\le n \\
\label{eq:eq2t}
0 \leq \displaystyle\sum_{j'=1}^{j} X_{ij'}, &
&\mbox{ for all }1\le j\le n, 1\leq i\leq \lambda_1 \\
\label{eq:eq3t}
\displaystyle\sum_{j'=1}^n X_{ij'} &= a_{\lambda_1-i+1}, &\mbox{ for all } 1\leq i \leq \lambda_1 \\
\label{eq:eq4t}
\displaystyle\sum_{i'=1}^{\lambda_1} X_{i'j} &= 1, &\mbox{ if } j\in v\mbox{ and } 0\mbox{ otherwise.} 
\end{align}
\end{theorem}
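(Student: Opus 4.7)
The plan is to adapt the proof of Theorem~\ref{thm:ineqthmshape} with one key observation: the additional column-sum constraint (\ref{eq:eq4t}) forces every circuit encountered in the von Neumann-style argument to be closed, and closed circuits automatically preserve both row and column sums. So I will not need to introduce any new construction; instead, I will argue that the extra hypothesis kills the only case in the original proof that changed column sums, and that the rest of the proof goes through unchanged.

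First I would dispatch the forward direction. Any $X\in P(v,\lambda,n)$ is a convex combination $X=\sum_\gamma \mu_\gamma M_\gamma$ with $M_\gamma\in M(v,\lambda,n)\subseteq M(\lambda,n)$, so (\ref{eq:eq1t}) and (\ref{eq:eq2t}) follow from Definition~\ref{def:sm}, (\ref{eq:eq3t}) follows from (\ref{eq:Mij_rowsum}) in Definition~\ref{def:MtoSSYT}, and (\ref{eq:eq4t}) follows from (\ref{eq:Mij_colsum_v}) in Definition~\ref{def:MtoSSYTv}.

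For the reverse direction, let $X$ be a real matrix satisfying (\ref{eq:eq1t})--(\ref{eq:eq4t}). If $X$ has only integer partial sums then (\ref{eq:eq1t})--(\ref{eq:eq4t}) reduce to the defining conditions of $M(v,\lambda,n)$, so $X\in P(v,\lambda,n)$. Otherwise, I would construct a circuit of non-integer partial sums in $\thickhat{X}$ exactly as in Theorem~\ref{thm:ineqthmshape}. The crucial new feature is that (\ref{eq:eq4t}) forces $c_{\lambda_1,j}\in\{0,1\}$ for every $j$, so all bottom boundary partial sums are integers, just like the top, left, and right boundary partial sums. Therefore the path-growing procedure can never terminate at a bottom boundary vertex; it must revisit an earlier vertex, yielding (after possibly trimming the attached tail as in the original proof) a \emph{closed} simple circuit whose edge labels are all non-integer.

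Given such a closed circuit, I would define $X^+$ and $X^-$ exactly as in the proof of Theorem~\ref{thm:ineqthmshape} by alternately adding and subtracting the maximal allowed $\ell^+$ and $\ell^-$ at the corners. Because the circuit is closed, every row and every column contains an \emph{even} number of corners, so in each row and each column the $+\ell$ and $-\ell$ contributions cancel in pairs. Consequently, $X^+$ and $X^-$ have exactly the same row totals \emph{and} column totals as $X$, so they continue to satisfy (\ref{eq:eq3t}) and (\ref{eq:eq4t}) in addition to (\ref{eq:eq1t})--(\ref{eq:eq2t}). Writing
\[X = \tfrac{\ell^-}{\ell^++\ell^-}X^+ + \tfrac{\ell^+}{\ell^++\ell^-}X^-\]
and iterating (termination follows exactly as in Theorem~\ref{thm:ineqthmshape}, since each step forces at least one more partial sum to attain an integer bound), $X$ is written as a convex combination of matrices in $M(v,\lambda,n)$, hence lies in $P(v,\lambda,n)$.

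The only delicate point is verifying that the closed-circuit case of the original argument really does carry over verbatim; this is not hard but needs to be said explicitly, because in Theorem~\ref{thm:ineqthmshape} open circuits were possible and the column-sum preservation was not needed there. Once the exclusion of open circuits is established via (\ref{eq:eq4t}), everything else is already supplied by the earlier proof.
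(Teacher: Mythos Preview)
Your proposal is correct and follows essentially the same approach as the paper: the paper's proof simply states that the argument of Theorem~\ref{thm:ineqthmshape} goes through with only closed circuits, because both row and column sums are now fixed. Your write-up is a faithful (and more detailed) expansion of exactly that observation.
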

\begin{proof}
This proof follows the proof of Theorem~\ref{thm:ineqthmshape}, except since both the row and column sums are fixed, only closed circuits are needed.
\end{proof}

\begin{definition}
\label{def:v_completegraph_shape}
Define $\overline\Gamma_{(v,\lambda,n)}$ as the following labeling of the graph $\Gamma_{(
\lambda_1,n)}$. 
All edges are labeled as in $\overline\Gamma_{(\lambda,n)}$, except the last vertical edge in column $j$ is labeled $1$ if $j\in v$ and $0$ otherwise.
$0$-dimensional components, components, containment of components, and regions are defined analogously.
Let $\Lambda_{(v,\lambda,n)}$ denote the {partial order} on components of $\overline\Gamma_{(v,\lambda,n)}$  by containment.
\end{definition}

\begin{theorem}
\label{th:v_g_bijection_shape}
Let $F$ be a face of $P(v,\lambda,n)$ and $\mathcal{M}(F)$ be equal to the set of sign matrices that are vertices of $F$. The map $\psi:F\mapsto g(\mathcal{M}(F))$ is a bijection between faces of $P(v,\lambda,n)$ and components of $\overline{\Gamma}_{(v,\lambda,n)}$. 
Moreover, $\psi$ is a poset isomorphism, and the dimension of $F$ is equal to the number of regions of $\psi(F)$.
\end{theorem}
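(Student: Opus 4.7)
The plan is to mirror the proofs of Theorems~\ref{th:g_bijection}, \ref{thm:poset_iso}, and \ref{th:g_bijection_shape} in this new setting. There are three things to check: that $\psi$ is a bijection with an explicit inverse $\varphi$, that $\psi$ is order-preserving in both directions, and that $\dim F = \mathcal{R}(\psi(F))$.

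First I would check well-definedness: for $M\in M(v,\lambda,n)$, the $0$-dimensional component $g(M)$ already respects the extra labeling constraints of Definition~\ref{def:v_completegraph_shape}, since the bottom vertical edge in column $j$ of $g(M)$ carries the partial column sum $c_{\lambda_1, j}$ of $M$, which equals $1$ if $j\in v$ and $0$ otherwise by Definition~\ref{def:MtoSSYTv}. For the inverse, I would define $\varphi(\nu)$ to be the intersection of the facet hyperplanes of $P(v,\lambda,n)$ corresponding to the non-darkened edges of $\nu$, and prove $\psi\circ\varphi=\mathrm{id}$ by the contradiction argument of Theorem~\ref{th:g_bijection}: any strictly larger edge label in $\psi(\varphi(\nu))$ than in $\nu$ would force the corresponding partial sum to take more than one value on the vertices of $\varphi(\nu)$, impossible since the relevant facet is among those intersected.

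The poset isomorphism statement is then entirely analogous to Theorem~\ref{thm:poset_iso}: inclusion of faces corresponds to intersecting with additional facet hyperplanes from Theorem~\ref{thm:v_ineqthmshape}, which in turn corresponds to un-darkening edges in the associated component. Before addressing the dimension claim, I would verify that Lemma~\ref{prop:regions} adapts: its proof uses only Equation~(\ref{eq:rc}) and the fact that two distinct $0$-dimensional components are connected by a circuit in $\thickhat{X}$, and since the proof of Theorem~\ref{thm:v_ineqthmshape} shows that both row and column sums are fixed in $P(v,\lambda,n)$ so every such circuit is closed (and hence bounds a new planar region), the strict-increase conclusion transfers unchanged.

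The only genuinely new step, and the main obstacle, is verifying the region count of the maximal component of $\Lambda_{(v,\lambda,n)}$. When $1\leq k<n$, both the rightmost horizontal edges (fixed by $\lambda$) and the bottom vertical edges (fixed by $v$) carry single-element labels in $\overline{\Gamma}_{(v,\lambda,n)}$ and are never darkened. Thus the darkened subgraph of the maximal component is exactly the internal $\lambda_1\times n$ grid, with $V=\lambda_1 n$ vertices, $\lambda_1(n-1)$ horizontal edges, and $(\lambda_1-1)n$ vertical edges; a direct Euler-characteristic computation gives $F = 2 - V + E = (\lambda_1-1)(n-1) + 1$, hence $(\lambda_1-1)(n-1)$ bounded regions, matching Proposition~\ref{prop:transdim}. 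Combined with the adapted Lemma~\ref{prop:regions} along a maximal chain from a vertex face (which has $0$ regions) to the top face, strict monotonicity forces $\dim F = \mathcal{R}(\psi(F))$ for every $F$. The case $k=n$ forces $v=(1,2,\ldots,n)$, so $P(v,\lambda,n)=P(\lambda,n)$ and the result reduces immediately to Theorem~\ref{th:g_bijection_shape}.
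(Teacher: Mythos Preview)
Your proposal is correct and follows essentially the same approach as the paper: both argue that the bijection and poset-isomorphism arguments of Theorems~\ref{th:g_bijection} and \ref{thm:poset_iso} transfer verbatim, and that the only new content is verifying the region count of the maximal component of $\Lambda_{(v,\lambda,n)}$ against Proposition~\ref{prop:transdim}, together with the observation that $k=n$ forces $v=(1,2,\ldots,n)$ and reduces to Theorem~\ref{th:g_bijection_shape}. Your explicit Euler-characteristic computation for the $(\lambda_1-1)(n-1)$ region count is a nice addition---the paper simply asserts this number---and your remark that the fixed row and column sums force all circuits to be closed is correct, though not strictly necessary for the adapted Lemma~\ref{prop:regions} since either kind of circuit would create a new region.
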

\begin{proof}
The proof is analogous to the proof of Theorem~\ref{th:g_bijection_shape}; we need only check that the number of regions of the maximal component of $\Lambda_{(v,\lambda,n)}$ matches the dimension of $P(v,\lambda,n)$.
Recall from Proposition~\ref{prop:transdim} that the dimension of $P(v,\lambda,n)$ equals $(\lambda_1-1)(n-1)$ when $1 \leq k < n$, and $(\lambda_1 - \lambda_n)(n-1)$ when $k=n$. Note that when $1 \leq k < n$,
there are $(\lambda_1-1)(n-1)$ regions in the maximal component of $\Lambda_{(v,\lambda,n)}$. When $k=n$, the only possible first column of $T\in SSYT(\lambda,n)$ is $v=(1,2,\ldots,n)$, thus $P(v,\lambda,n)=P(\lambda,n)$ and we may use Theorem~\ref{th:g_bijection_shape}.
\end{proof}

Theorem~\ref{thm:transportation} relates sign matrix polytopes to transportation polytopes. We first give the following definition (see, for example, \cite{deloera} and references therein).
\begin{definition} 
 Fix two integers $p,q \in \mathbb{Z}_{>0}$ and two vectors $y \in\mathbb{R}_{\geq 0}^p$ and $z \in \mathbb{R}_{\geq 0}^q$. The \emph{transportation polytope} $P_{(y,z)}$ 
is the convex polytope defined in the $pq$ variables
$X_{ij} \in \mathbb{R}_{\geq 0}$, $1\leq i\leq p, 1\leq j \leq q$) satisfying the $p + q$ equations:
\begin{align}
\label{eq:transp1}
\sum_{j'=1}^{q} X_{ij'} &= y_i, &  \mbox{ for all } 1\leq i\leq p  \\
\label{eq:transp2}
\sum_{i'=1}^{p} X_{i'j} &=z_j, &  \mbox{ for all } 1\leq j \leq q.  \end{align}
\end{definition}

\begin{theorem}
\label{thm:transportation}
The nonnegative part of $P(v,\lambda,n)$ is the transportation polytope $P_{(y,z)}$, where $y_i=a_{\lambda_1-i+1}$ for all $1\leq i\leq \lambda_1$ and $z_j=$ 1 if $j\in v$ and $0$ otherwise.
\end{theorem}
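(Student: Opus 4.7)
The plan is to prove the theorem by a direct comparison of inequality descriptions, using Theorem~\ref{thm:v_ineqthmshape} to describe the nonnegative part of $P(v,\lambda,n)$ and the transportation polytope definition to describe $P_{(y,z)}$. Both polytopes live naturally in $\mathbb{R}^{\lambda_1 n}$, so no reindexing or embedding is required; I will take $p=\lambda_1$ and $q=n$ in the transportation definition.

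First I would verify the forward inclusion. Let $X$ lie in the nonnegative part of $P(v,\lambda,n)$. Then $X \geq 0$ by assumption, and Theorem~\ref{thm:v_ineqthmshape} gives both (\ref{eq:eq3t}) and (\ref{eq:eq4t}). Reading (\ref{eq:eq3t}) against (\ref{eq:transp1}) with $y_i = a_{\lambda_1-i+1}$ matches the row-sum equations exactly, and reading (\ref{eq:eq4t}) against (\ref{eq:transp2}) with $z_j = 1$ for $j \in v$ and $z_j = 0$ otherwise matches the column-sum equations exactly. Hence $X \in P_{(y,z)}$.

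For the reverse inclusion, let $X \in P_{(y,z)}$. Then $X \geq 0$, (\ref{eq:eq3t}) and (\ref{eq:eq4t}) both hold by the above identification, and it remains to check (\ref{eq:eq1t}) and (\ref{eq:eq2t}). Both lower bounds $0 \leq \sum_{i'=1}^{i} X_{i'j}$ and $0 \leq \sum_{j'=1}^{j} X_{ij'}$ follow immediately from nonnegativity of the entries. For the upper bound $\sum_{i'=1}^{i} X_{i'j} \leq 1$, I would use nonnegativity of $X_{i'j}$ for $i' > i$ together with (\ref{eq:transp2}) to write
\[
\sum_{i'=1}^{i} X_{i'j} \;\leq\; \sum_{i'=1}^{\lambda_1} X_{i'j} \;=\; z_j \;\leq\; 1.
\]
This shows $X$ satisfies the full inequality description of $P(v,\lambda,n)$ and is nonnegative, completing the equality of the two sets.

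I do not anticipate any significant obstacle here; the argument is essentially a dictionary translation between two inequality descriptions, enabled by Theorem~\ref{thm:v_ineqthmshape}. The only subtle point is recognizing that once nonnegativity is imposed, the partial-sum upper bound in (\ref{eq:eq1t}) is forced by the total column-sum equation (\ref{eq:eq4t}), so no additional constraints remain to be verified and no new constraints are introduced.
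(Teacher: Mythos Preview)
Your proposal is correct and follows essentially the same approach as the paper: both directions are handled by matching (\ref{eq:eq3t})--(\ref{eq:eq4t}) with (\ref{eq:transp1})--(\ref{eq:transp2}), and for the reverse inclusion the paper likewise notes that nonnegativity together with column sums at most $1$ forces (\ref{eq:eq1t}) and (\ref{eq:eq2t}). Your write-up simply spells out the details a bit more explicitly than the paper does.
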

\begin{proof}
By Theorem~\ref{thm:v_ineqthmshape}, the nonnegative part of $P(v,\lambda,n)$ is contained in $P_{(y,z)}$, since for these choices of $y$ and $z$, (\ref{eq:transp1}) and (\ref{eq:transp2}) are exactly (\ref{eq:eq3t}) and (\ref{eq:eq4t}). For the reverse inclusion, note in addition that any matrix with nonnegative entries and column sums at most $1$ satisfies (\ref{eq:eq1t}) and (\ref{eq:eq2t}). \end{proof}

This is analogous to the fact that the non-negative part of the alternating sign matrix polytope is the {Birkhoff polytope}~\cite{behrend,striker}.

\section*{Acknowledgments}
The authors thank Jesus De Loera for helpful conversations on transportation polytopes, Dennis Stanton for making us aware of Theorem~\ref{thm:Gordon}, and the anonymous referee for helpful comments.  The authors also thank the developers of \verb|SageMath|~\cite{sage} software, especially the code related to polytopes and tableaux, which was helpful in our research, and the developers of \verb|CoCalc|~\cite{SMC} for making \verb|SageMath| more accessible. JS was supported by a grant from the Simons Foundation/SFARI (527204, JS).

\bibliographystyle{plain}
\bibliography{biblio}

\end{document}